\keywords{twisted group algebra, sign function, Cayley-Dickson algebra, Clifford algebra, quaternions, octonions, sedenions, geometric algebra, Hilbert space}
\newtheorem{theorem}{Theorem}[section]
\newtheorem{lemma}[theorem]{Lemma}
\theoremstyle{definition}
\newtheorem{definition}[theorem]{Definition}
\newtheorem{corollary}[theorem]{Corollary}
\newtheorem{conjecture}[theorem]{Conjecture}
\newtheorem{remark}[theorem]{Remark}
\newtheorem{question}[theorem]{Question}
\newtheorem{notation}[theorem]{Notation}
\numberwithin{equation}{section}
\DeclareMathOperator{\sgn}{\alpha}
\DeclareMathOperator{\cyd}{\gamma}
\DeclareMathOperator{\clf}{\phi}
\newcommand{\Sp}{\mathbb{S}}
\newcommand{\norm}[1]{\|\,#1\,\|}
\newcommand{\ip}[2]{\left\langle#1,#2\right\rangle}
\newcommand{\conj}[1]{{#1}^*}
\newcommand{\Sum}[1]{\sum_{#1\in G}}
\newcommand{\sob}[1]{{\langle#1\rangle}}
\newcommand{\sbf}[1]{(-1)^{\langle#1\rangle}}
\def\urltilda{\kern -.15em\lower .7ex\hbox{\~{}}\kern .04em}                               
\begin{document}

\title{Cayley-Dickson and Clifford Algebras as Twisted Group Algebras (2003)}

\author{John W. Bales}
\date{}
\address{Department of Mathematics\\
         Tuskegee University\\
	 Tuskegee, AL 36088\\
	 USA}
\email{jbales@mytu.tuskegee.edu}

\subjclass[2000]{16S99,16W99}

\begin{abstract} Given a finite group $G,$ a set of basis vectors $\mathcal{B}=\{i_p | p\in G\}$ and a `sign function' or `twist' $\sgn:G\times G\to\{-1,1\},$ there is a `twisted group algebra' defined on the set $V$ of all linear combinations of elements of $\mathcal{B}$ over a field $\mathfrak{F}$ such that if $p,q\in G,$ then $i_pi_q=\sgn(p,q)i_{pq}.$ This product is extended to $V$ by distribution. Examples of such twisted group algebras are the Cayley-Dickson algebras and Clifford algebras. It is conjectured that the Hilbert Space $\ell^2$ of square summable sequences is a Cayley-Dickson algebra.
\end{abstract}
\maketitle
\section{Introduction}
In 1972 I took a complex analysis course taught by William R. R. Transue using
a text by J. S. McNerney ``An Introduction to Analytic Functions with
Theoretical Implications''\cite{jM72}. An exercise in this book asked the student
to decide whether the scheme for multiplying two ordered pairs of real
numbers could be extended to higher dimensional spaces. At the time, I
was also taking a course taught by Coke Reed on the Hilbert space
$\ell^2$ of square summable sequences of real numbers. I decided to
investigate whether a product akin to the product of complex numbers could be extended in
a meaningful way to $\ell^2.$ My idea was to do this by equating an
ordered pair of sequences with the ``shuffling'' of the two sequences.
Being unaware at the time of the Cayley-Dickson construction I naively constructed a sequence of spaces
 utilizing the product
\begin{equation}
 (a,b)\cdot(c,d)=(ac-bd,ad+bc)
\end{equation}
This construction led to a sequence $i_0=1,i_1=i,i_2,i_3,\cdots$ of unit basis vectors for $\ell^2$ satisfying the twisted product
\begin{equation}
 i_p\cdot i_q=\eta(p,q)i_{pq}
\end{equation}
where $pq$ was the bit-wise ``exclusive or'' of the binary representations of $p$ and $q$ and
\begin{equation}
 \eta(p,q)=(-1)^{<p\wedge q>}
\end{equation}
where $p\wedge q$ is the bit-wise ``and'' function of $p$ and $q$ and $<r>$ represents the sum of the bits in the binary representation of $r$. Since the matrix associated with this function is a Hadamaard matrix, I called this the ``Hadamaard sign function.''
(I was unaware of the term `twist' and of twisted group algebras at the time since my training was in general topology, not algebra.)
Seeing that the four-dimensional `Hadamaard space' created by this construction was not the quaternions, but a space with zero divisors, I abandoned the project in 1972.
In 1983, while culling some of my papers, I came across the
notes I had made on this problem and began to look at it again. At that time I found a nice way to represent a 
twisted product using inner products and conjugates (Theorem \ref{T:product}). I gave a short talk on those results at a meeting of the Alabama Academy of Sciences in 1992 at Tuskegee University.

Last year I read John Baez' article `The Octonions' \cite{jB02} where I learned for the first time of twisted group algebras and of Cayley-Dickson and Clifford algebras. This motivated me to look at Cayley-Dickson and Clifford 
algebras. The resulting work is presented in this paper beginning with the section titled `The Cayley-Dickson Construction.' The sections prior to that section are the most general results 
of my work prior to 1985 not specifically tied to the Hadamaard twist. 

 \section{Twisted Group Algebras}
 
 Let $V$ denote an $n$-dimensional vector space over the field $\mathfrak{F}.$ Let $G$
 denote a group of order $n.$ Let $\mathcal{B}=\{i_p \mid p\in G \}$ denote a set of
 unit basis vectors for  $V.$ Then, for each $x\in V,$ there exist
 elements $\{x_p \mid x_p\in \mathfrak{F}, p\in G \}$ such that
 $x=\Sum{p}x_pi_p.$\\
 Define a product on the elements of $\mathcal{B}$ and their negatives in the following manner.
 \\[12pt]
 Let $\sgn: G \times G \mapsto \{ -1,1\}$ denote a \emph{sign} function on $G.$ Then for $p,q\in G$ define the product of $i_p$ and $i_q$ as follows.
 \begin{definition}
   \[i_p i_q = \sgn(p,q)i_{pq}\]
 \end{definition}
 Extend this product to $V$ in the natural way. That is,
 \begin{definition}
    \begin{align*}
    xy
    &= \left( \Sum{p}x_p i_p \right)\left(\Sum{q} y_q i_q \right)\\
    &= \Sum{p} \Sum{q} x_p y_q i_p  i_q\\
    &= \Sum{p} \Sum{q} x_p y_q \sgn(p,q) i_{pq}
    \end{align*}
 \end{definition}
 In defining the product this way, one gets the closure and distributive properties ``for free'', as well as $(cx)y=x(cy)=c(xy)$
 
 This product transforms the vector space $V$ into a \emph{twisted group algebra}. The properties of the algebra depend upon the properties of the twist and the properties of $G.$

 \begin{notation}
  Given a group $G$, twist $\alpha$ on $G$ and field $\mathfrak{F}$, let $[G,\alpha,\mathfrak{F}]$ denote the corresponding twisted group algebra. If $\mathfrak{F}=\mathbb{R}$, abbreviate this notation $[G,\alpha]$.
 \end{notation}
 
 \section{Twists and Field Properties}
 Let $G$ denote a finite group, $\mathfrak{F}$ a field and $\sgn$ a twist on $G$. Let $V=[G,\sgn,\mathfrak{F}].$
 \begin{definition}
  If $\sgn(p,e)=\sgn(e,p)=1$ where $e$ is the identity element of $G,$ then $\sgn$ is said to an \emph{identive} twist on $G.$
 \end{definition}
 \begin{theorem}\label{T:identity}
  If $\sgn$ is identive, then $i_e$ is the identity element, 1, of $V.$
 \end{theorem}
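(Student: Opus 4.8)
The plan is to verify the defining property of a multiplicative identity, namely that $i_e x = x i_e = x$ for every $x \in V$, by first checking it on the basis $\mathcal{B}$ and then invoking the distributive (bilinear) extension of the product built into the Definitions above.

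First I would compute the product of $i_e$ with an arbitrary basis vector $i_p$. By the Definition of the product on $\mathcal{B}$, $i_e i_p = \sgn(e,p) i_{ep}$, and since $e$ is the group identity we have $ep = p$, so $i_e i_p = \sgn(e,p) i_p$. Because $\sgn$ is identive, $\sgn(e,p) = 1$, giving $i_e i_p = i_p$. Symmetrically, $i_p i_e = \sgn(p,e) i_{pe} = \sgn(p,e) i_p = i_p$.

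Next I would pass to a general element $x = \Sum{p} x_p i_p \in V$. Using the extended product together with the scalar-commutation property $(c x) y = c(xy)$ noted in the text, I get
\[
i_e x = i_e \left( \Sum{p} x_p i_p \right) = \Sum{p} x_p (i_e i_p) = \Sum{p} x_p i_p = x,
\]
and likewise $x i_e = \Sum{p} x_p (i_p i_e) = \Sum{p} x_p i_p = x$. Since $i_e$ is by hypothesis one of the basis vectors, it is a genuine element of $V$, so this shows $i_e$ is a two-sided identity, which is unique if it exists; hence we are entitled to write $i_e = 1$.

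There is no real obstacle here: the argument is a direct unwinding of the definitions, and the only point that needs the identive hypothesis is the reduction $\sgn(e,p) = \sgn(p,e) = 1$. The one thing worth stating carefully is that the distributive extension of the product is exactly what licenses moving from the basis computation to the general $x$, so I would make that step explicit rather than leave it implicit.
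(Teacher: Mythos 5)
Your proof is correct and follows essentially the same route as the paper: check $i_e i_p = i_p$ and $i_p i_e = i_p$ using the identive hypothesis, then extend to all of $V$ by the distributive definition of the product. The paper merely compresses the two steps into one chain of equalities, whereas you separate them explicitly, but the content is identical.
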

 \begin{proof}
  If $x\in V,$ then\\
  $x i_e=\left(\Sum{p}x_p i_p\right) i_e=\Sum{p}x_pi_pi_e=\Sum{p}x_p \sgn(p,e)i_{pe}=\Sum{p}x_p i_p=x.$
  The proof for $i_e x=x$ is similar.
  Thus, $i_e$ is the identity element, 1, of $V$
 \end{proof}
 \begin{definition}\label{D:associative}
  If $\sgn(p,q)\sgn(pq,r)=\sgn(p,qr)\sgn(q,r)$ for $p,q,r\in G,$ then $\sgn$ is said to be an \emph{associative} twist on $G.$
 \end{definition}
 \begin{theorem}\
  If $p,q,r\in G$ and $\sgn$ is associative, then $i_p \left(i_qi_r\right)=\left(i_p i_q\right)i_r.$
  \end{theorem}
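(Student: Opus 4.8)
The plan is to reduce the identity to the defining property of an associative twist by direct expansion of both sides. First I would apply the product definition twice on the right-hand side: $(i_p i_q) i_r = \bigl(\sgn(p,q) i_{pq}\bigr) i_r = \sgn(p,q)\,(i_{pq} i_r) = \sgn(p,q)\sgn(pq,r)\, i_{(pq)r}$, where the middle equality uses the scalar-homogeneity $(cx)y = c(xy)$ noted just after the product definition. Symmetrically, on the left-hand side, $i_p (i_q i_r) = i_p\bigl(\sgn(q,r) i_{qr}\bigr) = \sgn(q,r)\,(i_p i_{qr}) = \sgn(q,r)\sgn(p,qr)\, i_{p(qr)}$.

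Next I would observe that, because $G$ is a group, its multiplication is associative, so $(pq)r = p(qr)$ and hence $i_{(pq)r}$ and $i_{p(qr)}$ are literally the same element of $\mathcal{B}$. It then remains only to compare the scalar coefficients: the right side carries $\sgn(p,q)\sgn(pq,r)$ and the left side carries $\sgn(p,qr)\sgn(q,r)$, and these agree precisely by Definition \ref{D:associative}, the hypothesis that $\sgn$ is associative. Combining the two displays yields the asserted equality.

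I do not anticipate a genuine obstacle here; the theorem is essentially a change of viewpoint, repackaging the cocycle-type relation of Definition \ref{D:associative} as associativity on basis vectors. The only point requiring care is to keep the two distinct associativities apart — the associativity of the group operation in $G$, which handles the subscripts, and the associativity of the twist, which handles the signs — and to invoke each exactly once. (This argument establishes associativity only on $\mathcal{B}$; extending it to all of $V$ would follow at once from bilinearity of the product, but the statement as given does not ask for that.)
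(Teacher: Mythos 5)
Your proposal is correct and is essentially the same argument as the paper's: expand both sides via the twisted product and scalar homogeneity, then match coefficients using the defining cocycle identity of an associative twist (with group associativity handling the subscripts). The paper merely presents this as a single left-to-right chain of equalities rather than expanding the two sides separately and comparing.
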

\begin{proof}
  \begin{align*}
 i_p\left(i_qi_r\right)&=i_p\left(\sgn(q,r)i_{qr}\right)\\
                 &=\sgn(q,r)i_pi_{qr}=\sgn(p,qr)\sgn(q,r)i_{p(qr)}\\
                 &=\sgn(p,q)\sgn(pq,r)i_{(pq)r}\\
                 &=\sgn(p,q)i_{pq}i_r\\
                 &=\left(i_pi_q\right)i_r.
  \end{align*}
\end{proof}
\begin{theorem}\label{T:associative}
 If $x,y,z\in V$ and if $\sgn$ is associative, then
 $x\left(yz\right)=\left(xy\right)z.$
\end{theorem}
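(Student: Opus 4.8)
The plan is to reduce the claim to the basis-vector case already established in the preceding theorem, using nothing more than bilinearity of the product. First I would write the three elements in coordinates, $x=\Sum{p}x_p i_p$, $y=\Sum{q}y_q i_q$, and $z=\Sum{r}z_r i_r$, with all $x_p,y_q,z_r\in\mathfrak{F}$ and all sums finite since $|G|=n<\infty$.

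Next I would expand each side into a triple sum. Using the distributive law and the scalar property $(cx)y=x(cy)=c(xy)$ noted just after the definition of the product, one has
\[
yz=\Sum{q}\Sum{r}y_q z_r\, i_q i_r,
\]
and then
\[
x(yz)=\Sum{p}\Sum{q}\Sum{r}x_p y_q z_r\, i_p(i_q i_r).
\]
Carrying out the same expansion on the other grouping gives
\[
(xy)z=\Sum{p}\Sum{q}\Sum{r}x_p y_q z_r\, (i_p i_q)i_r.
\]
The key step is then to invoke the previous theorem: since $\sgn$ is associative, $i_p(i_q i_r)=(i_p i_q)i_r$ for every $p,q,r\in G$. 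Substituting this term by term makes the two triple sums literally identical, so $x(yz)=(xy)z$.

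The only point that needs a little care — and the closest thing to an obstacle here — is the bookkeeping that justifies pulling the scalars out and distributing across the full triple product, i.e. verifying that the extended product really is bilinear in each argument so that the expansions above are valid; this is exactly what the ``closure, distributivity, and $(cx)y=x(cy)=c(xy)$ for free'' remark guarantees, and the finiteness of $G$ means there are no convergence issues to worry about. Once that is in hand, the argument is a one-line consequence of the basis-vector case, so I would keep the write-up short and simply chain the two displayed triple sums together through the basis-vector identity.
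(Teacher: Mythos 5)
Your proposal is correct and follows essentially the same route as the paper: expand $x(yz)$ and $(xy)z$ into triple sums over $G$, apply the basis-vector associativity $i_p(i_qi_r)=(i_pi_q)i_r$ from the preceding theorem term by term, and regroup. Nothing further is needed.
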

\begin{proof}
 $y z=\Sum{q}\Sum{r}\left(y_qz_r\right)i_qi_r,$ so
  \begin{align*}
 x(yz)&=
 \left(\Sum{p}x_pi_p\right)\left(\Sum{q}\Sum{r}\left(y_qz_r\right)i_qi_r\right)\\
 &=\Sum{p}\Sum{q}\Sum{r}x_p\left(y_qz_r\right)i_p\left(i_qi_r\right)\\
 &=\Sum{p}\Sum{q}\Sum{r}\left(x_py_q\right)z_r\left(i_pi_q\right)i_r\\
 &=\left(\Sum{p}\Sum{q}\left(x_py_q\right)i_pi_q\right)\Sum{r}z_ri_r\\
 &=\left(xy\right)z
  \end{align*}.
\end{proof}
\begin{definition}
 If $\sgn(e,e)=1,$ then $\sgn$ is said to be a \emph{positive} twist. Otherwise, $\sgn$ is a negative twist.
 \end{definition}
 \begin{theorem}
  If $\sgn$ is associative, then $\sgn(e,p)=\sgn(p,e)=\sgn(e,e).$
 \end{theorem}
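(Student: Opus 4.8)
The plan is to extract both identities directly from the defining relation of an associative twist, Definition~\ref{D:associative}, by making judicious substitutions for the three group elements and then exploiting the fact that $\sgn$ takes values in $\{-1,1\}$, so that $\sgn(x,y)^2=1$ for every $x,y\in G$.

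First I would establish $\sgn(e,p)=\sgn(e,e)$. In the associativity relation
\[
\sgn(p,q)\sgn(pq,r)=\sgn(p,qr)\sgn(q,r)
\]
substitute $p=e$, $q=e$, $r=p$. Using $ee=e$ and $ep=p$, the left side becomes $\sgn(e,e)\sgn(e,p)$ and the right side becomes $\sgn(e,p)\sgn(e,p)=\sgn(e,p)^2=1$. Hence $\sgn(e,e)\sgn(e,p)=1$, and multiplying through by $\sgn(e,e)$ (again using $\sgn(e,e)^2=1$) gives $\sgn(e,p)=\sgn(e,e)$.

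Next I would establish $\sgn(p,e)=\sgn(e,e)$ symmetrically. In the same relation substitute $p=p$, $q=e$, $r=e$. Using $pe=p$ and $ee=e$, the left side becomes $\sgn(p,e)\sgn(p,e)=\sgn(p,e)^2=1$ and the right side becomes $\sgn(p,e)\sgn(e,e)$. Hence $\sgn(p,e)\sgn(e,e)=1$, and multiplying by $\sgn(e,e)$ yields $\sgn(p,e)=\sgn(e,e)$. Combining the two computations gives $\sgn(e,p)=\sgn(p,e)=\sgn(e,e)$ for all $p\in G$, as claimed.

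There is essentially no hard part here: the only thing to be careful about is choosing substitutions that collapse enough of the arguments to the identity so that the cancellation works, and remembering that division by a value of $\sgn$ is legitimate precisely because those values are units squaring to $1$. It is worth remarking afterwards that this theorem shows an associative twist is identive if and only if it is positive, which dovetails with Theorem~\ref{T:identity}.
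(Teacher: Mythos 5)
Your proof is correct and uses exactly the same two substitutions into Definition~\ref{D:associative} ($p=e,q=e,r=p$ and $p=p,q=e,r=e$) that the paper's proof does; the paper merely presents the resulting chain of equalities in a slightly different order before cancelling. Your closing remark that, for associative twists, identive and positive are equivalent is also accurate and is consistent with Corollary~\ref{C:identive}.
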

 \begin{proof}
   $\sgn(e,e)\sgn(e,p)=\sgn(e,e)\sgn(ee,p)\\
   =\sgn(e,ep)\sgn(e,p)=\sgn(e,p)\sgn(e,p)=1.$\\
   Thus $\sgn(e,p)=\sgn(e,e).$\\
   $\sgn(p,e)\sgn(e,e)=\sgn(p,ee)\sgn(e,e)\\
   =\sgn(p,e)\sgn(pe,e)=\sgn(p,e)\sgn(p,e)=1.$\\
   Thus $\sgn(p,e)=\sgn(e,e)$
 \end{proof}
 \begin{corollary}\label{C:identive}
  Every positive associative twist is identive.
 \end{corollary}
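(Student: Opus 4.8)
The plan is to reduce everything to the preceding theorem, which already establishes that an associative twist $\sgn$ satisfies $\sgn(e,p)=\sgn(p,e)=\sgn(e,e)$ for every $p\in G$. Once that identity is in hand, the only additional input needed is the meaning of \emph{positive}, namely that $\sgn(e,e)=1$. So the corollary is a one-line specialization, and the plan is simply to assemble these two facts.

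Concretely, I would let $\sgn$ be a positive associative twist on $G$ and fix an arbitrary $p\in G$. Invoking the preceding theorem with this $p$ gives $\sgn(e,p)=\sgn(e,e)$ and $\sgn(p,e)=\sgn(e,e)$. Substituting the positivity hypothesis $\sgn(e,e)=1$ into both equations yields $\sgn(e,p)=\sgn(p,e)=1$. Since $p\in G$ was arbitrary, this is precisely the defining condition for $\sgn$ to be identive, which completes the argument.

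There is essentially no obstacle here: the substantive work was already carried out in the preceding theorem, whose proof only uses associativity applied to the triples $(e,e,p)$ and $(p,e,e)$ together with the fact that $\sgn$ is $\{-1,1\}$-valued, so that $\sgn(e,p)^2=\sgn(p,e)^2=1$. The one point worth noting explicitly is that positivity is exactly the hypothesis that turns the conclusion ``equal to $\sgn(e,e)$'' of that theorem into the conclusion ``equal to $1$'' required by the definition of identive; no further computation is involved.
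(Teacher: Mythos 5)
Your proposal is correct and is exactly the intended argument: the paper states this as a corollary of the immediately preceding theorem, which gives $\sgn(e,p)=\sgn(p,e)=\sgn(e,e)$ for associative twists, and positivity then forces the common value to be $1$, matching the definition of identive.
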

\begin{theorem}\label{T:altAssoc}
  The twist $\sgn$ is associative if and only if, for $p,q,r\in G$ $\sgn(p,q)\sgn(q,r)=\sgn(p,qr)\sgn(pq,r)$
 \end{theorem}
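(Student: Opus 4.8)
The plan is to exploit the fact that every value of $\sgn$ lies in the multiplicative group $\{-1,1\}$, so that $\sgn(p,q)^2=1$ for all $p,q\in G$ and cancellation is always permitted. Under these circumstances the two displayed identities are not merely logically equivalent but are transforms of one another under multiplication by a common unit, and the whole argument reduces to bookkeeping with signs.

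First I would assume $\sgn$ is associative, so that by Definition \ref{D:associative} we have $\sgn(p,q)\sgn(pq,r)=\sgn(p,qr)\sgn(q,r)$ for all $p,q,r\in G$. Multiplying both sides of this equation by $\sgn(pq,r)\sgn(q,r)$ and using $\sgn(pq,r)^2=\sgn(q,r)^2=1$ collapses the left side to $\sgn(p,q)\sgn(q,r)$ and the right side to $\sgn(p,qr)\sgn(pq,r)$, which is exactly the asserted identity.

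For the converse I would run the same computation in reverse: starting from $\sgn(p,q)\sgn(q,r)=\sgn(p,qr)\sgn(pq,r)$, multiply both sides again by $\sgn(pq,r)\sgn(q,r)$; the identical cancellations recover the defining relation of Definition \ref{D:associative}. Because the multiplier used is its own inverse in $\{-1,1\}$, the two directions are genuinely the same step performed twice, and no case analysis on the individual signs is required.

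The only point that needs care — and it is about the closest thing to an obstacle here — is to make sure the cancellation is being carried out in $\{-1,1\}$ rather than in some exotic ambient structure; but $\sgn$ is defined to take values in $\{-1,1\}$ with $1\cdot 1=(-1)(-1)=1$, so squaring any value gives $1$ unconditionally and the manipulation is valid in all cases. Consequently the proof is essentially one line in each direction, and I would present it as a short chain of equalities rather than a lengthy argument.
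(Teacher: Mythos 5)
Your proof is correct and is essentially identical to the paper's: both multiply the defining relation $\sgn(p,q)\sgn(pq,r)=\sgn(p,qr)\sgn(q,r)$ through by $\sgn(q,r)\sgn(pq,r)$ and use that squares in $\{-1,1\}$ are $1$ to obtain the alternative form, with the converse following by the same involutive multiplication. You simply spell out both directions explicitly where the paper compresses them into a single remark that the two conditions are equivalent.
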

 \begin{proof}
  Multiplying each side of the equation \\
  $\sgn(p,q)\sgn(pq,r)=\sgn(p,qr)\sgn(q,r)$ by $\sgn(q,r)\sgn(pq,r)$ yields\\
  $\sgn(p,q)\sgn(q,r)=\sgn(p,qr)\sgn(pq,r).$ So the two conditions are equivalent.
 \end{proof}
 \begin{theorem}
 If $\sgn$ is positive and associative, then $<V,+,\cdot>$ is a ring with unity.
 \end{theorem}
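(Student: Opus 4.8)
The plan is to verify the axioms for a ring with unity one at a time, observing that every nontrivial one has already been established earlier in the paper. Recall that $\langle V,+,\cdot\rangle$ is a ring with unity precisely when $(V,+)$ is an abelian group, multiplication is associative, multiplication distributes over addition on both sides, and there is a two-sided multiplicative identity.

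First I would note that $(V,+)$ is an abelian group: this is immediate, since by hypothesis $V$ is a vector space over $\mathfrak{F}$, and in particular its additive structure is that of an abelian group. Second, the distributive laws hold: when the product was extended from $\mathcal{B}$ to all of $V$ it was defined by bilinear extension, so $x(y+z)=xy+xz$ and $(x+y)z=xz+yz$ hold automatically — the remark following the extension of the product records exactly this, that closure and distributivity come ``for free.''

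Third, associativity of $\cdot$ is Theorem \ref{T:associative}: since $\sgn$ is assumed associative, $x(yz)=(xy)z$ for all $x,y,z\in V$. Finally, for the unity: since $\sgn$ is positive and associative, Corollary \ref{C:identive} shows $\sgn$ is identive, and then Theorem \ref{T:identity} yields that $i_e$ is a two-sided multiplicative identity, i.e. the element $1$ of $V$. Assembling these four observations establishes that $\langle V,+,\cdot\rangle$ is a ring with unity.

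There is essentially no real obstacle here; the content of the statement is just that the two hypotheses ``positive'' and ``associative'' are exactly what is needed to upgrade the bilinear algebra $V$ to a unital associative ring, and the substantive work was carried out in the preceding theorems. The only point worth a sentence is why ``positive'' suffices rather than the a priori stronger ``identive'' — and that is precisely the content of Corollary \ref{C:identive}, that positivity together with associativity implies identivity.
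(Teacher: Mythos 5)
Your proof is correct and follows essentially the same route as the paper, which simply cites Theorem~\ref{T:identity}, Theorem~\ref{T:associative}, and Corollary~\ref{C:identive}; you have just spelled out explicitly the additive-group and distributivity checks that the paper leaves implicit in the vector-space structure and the bilinear definition of the product.
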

 \begin{proof}
  Follows immediately from Theorems~\ref{T:identity} and \ref{T:associative} and Corollary~\ref{C:identive}.
 \end{proof}

\begin{theorem}\label{T:lr_inverse}
  For each $p\in G,$ $\sgn\left(p,p^{-1}\right)i_{p^{-1}}$ and $\sgn\left(p^{-1},p\right)i_{p^{-1}}$
  are right and left inverses, respectively, of $i_p.$
 \end{theorem}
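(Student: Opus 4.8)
The plan is a direct computation from the product on basis vectors, using nothing beyond the facts that $\sgn$ is $\{-1,1\}$-valued (so $\sgn(p,p^{-1})^2=\sgn(p^{-1},p)^2=1$) and that $pp^{-1}=p^{-1}p=e$. First I would verify the right inverse:
\[
i_p\bigl(\sgn(p,p^{-1})i_{p^{-1}}\bigr)=\sgn(p,p^{-1})\,i_pi_{p^{-1}}=\sgn(p,p^{-1})^2 i_{pp^{-1}}=i_e,
\]
where the middle equality is the defining relation $i_pi_q=\sgn(p,q)i_{pq}$ and the scalars pull out by the property $(cx)y=c(xy)$ noted after the definition of the extended product. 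Symmetrically, for the left inverse,
\[
\bigl(\sgn(p^{-1},p)i_{p^{-1}}\bigr)i_p=\sgn(p^{-1},p)\,i_{p^{-1}}i_p=\sgn(p^{-1},p)^2 i_{p^{-1}p}=i_e.
\]

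To read these two identities as statements about \emph{inverses} of $i_p$, I would then invoke Theorem~\ref{T:identity}: when $\sgn$ is identive, $i_e$ is the unity $1$ of $V$, so the displays become $i_p y_R=1$ and $y_L i_p=1$ with $y_R=\sgn(p,p^{-1})i_{p^{-1}}$ and $y_L=\sgn(p^{-1},p)i_{p^{-1}}$, which is exactly the claim. (Recall that by Corollary~\ref{C:identive} any positive associative twist is already identive, so the hypothesis is mild.)

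The computation itself presents no obstacle whatsoever; the only point genuinely requiring care is the status of the identity element. As literally phrased the statement needs $\sgn$ to be identive in order for the word ``inverse'' to be meaningful, since otherwise $i_e\neq 1$; without that hypothesis the two displays still establish the weaker, twist-free fact that $i_p$ and $\sgn(p,p^{-1})i_{p^{-1}}$ (respectively $\sgn(p^{-1},p)i_{p^{-1}}$) compose to $i_e$ on the appropriate side. I would therefore either add ``identive'' to the hypotheses or restate the conclusion in terms of $i_e$. As a closing remark, one may note that if in addition $\sgn$ is associative then taking $(p,q,r)=(p,p^{-1},p)$ in Definition~\ref{D:associative} and using $\sgn(e,p)=\sgn(p,e)=\sgn(e,e)=1$ forces $\sgn(p,p^{-1})=\sgn(p^{-1},p)$, so the left and right inverses coincide; but that refinement is not needed for the present statement.
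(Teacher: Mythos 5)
Your computation is exactly the paper's proof: expand $i_p\bigl(\sgn(p,p^{-1})i_{p^{-1}}\bigr)$ and $\bigl(\sgn(p^{-1},p)i_{p^{-1}}\bigr)i_p$ via the defining relation and use $\sgn(\cdot,\cdot)^2=1$ to get $i_e$. Your side remark is well taken: the paper's own proof ends with ``$=i_e=1$'' without stating the identive hypothesis that Theorem~\ref{T:identity} requires, so your suggestion to either add ``identive'' or phrase the conclusion in terms of $i_e$ tightens the statement without changing the argument.
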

 \begin{proof}
  \begin{align*}
  i_p\left(\sgn\left(p,p^{-1}\right)i_{p^{-1}}\right)
    &=\sgn\left(p,p^{-1}\right)i_pi_{p^{-1}}\\
    &=\sgn\left(p,p^{-1}\right)\left(\sgn\left(p,p^{-1}\right)i_{pp^{-1}}\right)\\
    &=i_e=1\\
  \left(\sgn\left(p^{-1},p\right)i_{p^{-1}}\right)i_p
    &=\sgn\left(p^{-1},p\right)i_{p^{-1}}i_p\\
    &=\sgn\left(p^{-1},p\right)\left(\sgn\left(p^{-1},p\right)i_{pp^{-1}}\right)\\
    &=i_e=1
  \end{align*}
 \end{proof}
 \begin{definition}\label{D:invertive}
  If $\sgn\left(p,p^{-1}\right)=\sgn\left(p^{-1},p\right)$ for $p\in G,$ then $\sgn$ is an \emph{invertive} twist on $G.$
   \end{definition}
 \begin{theorem}
  If $p\in G$ and if $\sgn$ is invertive, then $i_p$ has an inverse \\
  $i_p^{-1}=\sgn\left(p,p^{-1}\right)i_{p^{-1}}=\sgn\left(p^{-1},p\right)i_{p^{-1}}.$
 \end{theorem}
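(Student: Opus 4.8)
The plan is to read this off directly from Theorem~\ref{T:lr_inverse} together with Definition~\ref{D:invertive}. First I would invoke Theorem~\ref{T:lr_inverse}, which already establishes that $\sgn(p,p^{-1})i_{p^{-1}}$ is a right inverse of $i_p$ and that $\sgn(p^{-1},p)i_{p^{-1}}$ is a left inverse of $i_p$. Then I would apply the hypothesis that $\sgn$ is invertive, which by Definition~\ref{D:invertive} means precisely $\sgn(p,p^{-1})=\sgn(p^{-1},p)$; hence the displayed left inverse and the displayed right inverse are literally the same vector in $V$. An element possessing a left inverse and a right inverse that happen to coincide has that common element as a two-sided inverse, so $i_p^{-1}=\sgn(p,p^{-1})i_{p^{-1}}=\sgn(p^{-1},p)i_{p^{-1}}$ as claimed.

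I would also remark on what is \emph{not} needed here: the usual argument that a left inverse equals a right inverse ($L=L(i_p R)=(L i_p)R=R$) requires associativity of the product, but in the present situation we do not invoke that argument at all — the equality of the two one-sided inverses is handed to us by the invertive hypothesis, so the conclusion holds regardless of whether $\sgn$ is associative. If one wanted the inverse to be \emph{unique} (so that writing ``$i_p^{-1}$'' is unambiguous), one would additionally assume $\sgn$ associative and run the standard $L=R$ argument using Theorem~\ref{T:associative}; but for mere existence of a two-sided inverse no such assumption is required.

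There is essentially no obstacle: the only point to be careful about is the logical distinction between ``has a two-sided inverse'' and ``has a unique inverse,'' and making sure the proof claims only the former unless associativity is added to the hypotheses. I would therefore keep the proof to two or three lines: cite Theorem~\ref{T:lr_inverse} for the one-sided inverses, cite Definition~\ref{D:invertive} to identify them, and conclude.
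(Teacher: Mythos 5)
Your proof is correct and takes exactly the same route as the paper, which simply cites Theorem~\ref{T:lr_inverse} and Definition~\ref{D:invertive}. Your additional remark distinguishing existence of a two-sided inverse from uniqueness (and noting that associativity is not needed for the former) is a sensible clarification, but the core argument is identical.
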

 \begin{proof}
  Follows immediately from Theorem \ref{T:lr_inverse} and Definition \ref{D:invertive}.
 \end{proof}
 \begin{definition}\label{D:conjugate}
  If $\sgn$ is invertive, and $x\in V,$ then let $\conj{x}=\Sum{p}\conj{x}_pi_p^{-1}$ denote
  the \emph{conjugate} of $x.$
 \end{definition}
 \begin{theorem}\label{T:conjugate}
  If $\sgn$ is an invertive twist on $G,$ and if $x,y\in V,$ then
  \begin{enumerate}
   \item[(i)] $\conj{x}=\Sum{p}\sgn\left(p^{-1},p\right)\conj{x}_{p^{-1}}i_p$
   \item[(ii)] $\conj{\conj{x}}=x$
   \item[(iii)] $\conj{(x+y)}=\conj{x}+\conj{y}$
   \item[(iv)] $\conj{(cx)}=\conj{c}\conj{x}$ for all $c\in \mathfrak{F}.$
  \end{enumerate}
 \end{theorem}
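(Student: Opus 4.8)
The plan is to treat (i) first, since the explicit coordinate formula it provides is exactly what drives (ii), after which (iii) and (iv) are one-line unwindings of Definition~\ref{D:conjugate}. Throughout, $\conj{x}_p$ denotes the image of the coordinate $x_p\in\mathfrak{F}$ under the conjugation of $\mathfrak{F}$, and the only facts I will use about that field conjugation are that it is additive, multiplicative, an involution, and fixes $1$ and $-1$ (in particular it fixes every value of $\sgn$).

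For (i), I would start from $\conj{x}=\Sum{p}\conj{x}_p\,i_p^{-1}$ and insert the value $i_p^{-1}=\sgn(p,p^{-1})i_{p^{-1}}$ established just above, obtaining $\conj{x}=\Sum{p}\sgn(p,p^{-1})\conj{x}_p\,i_{p^{-1}}$. Then I reindex the sum along the bijection $p\mapsto p^{-1}$ of $G$ onto itself: substituting $q^{-1}$ for $p$ everywhere and renaming $q$ back to $p$ converts the right-hand side into $\Sum{p}\sgn(p^{-1},p)\conj{x}_{p^{-1}}\,i_p$, which is (i). Only the standing hypothesis that $\conj{x}$ is defined is used here; invertivity of $\sgn$ is not needed for this step.

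For (ii), read off from (i) that the $p$-th coordinate of $\conj{x}$ is $(\conj{x})_p=\sgn(p^{-1},p)\conj{x}_{p^{-1}}$. Applying (i) again, now with $\conj{x}$ in place of $x$, gives $\conj{\conj{x}}=\Sum{p}\sgn(p^{-1},p)\,\conj{(\conj{x})_{p^{-1}}}\,i_p$. Substituting the coordinate formula at the index $p^{-1}$, and using $(p^{-1})^{-1}=p$, yields $(\conj{x})_{p^{-1}}=\sgn(p,p^{-1})\conj{x}_p$; since $\sgn(p,p^{-1})$ is fixed by the field conjugation and that conjugation is an involution, $\conj{(\conj{x})_{p^{-1}}}=\sgn(p,p^{-1})x_p$. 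Hence the $p$-th coordinate of $\conj{\conj{x}}$ is $\sgn(p^{-1},p)\sgn(p,p^{-1})x_p$, and this is where invertivity (Definition~\ref{D:invertive}) finally enters: $\sgn(p^{-1},p)=\sgn(p,p^{-1})$ forces the coefficient to equal $\sgn(p,p^{-1})^2=1$, so $\conj{\conj{x}}=\Sum{p}x_p\,i_p=x$.

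Finally, (iii) and (iv) follow straight from $\conj{x}=\Sum{p}\conj{x}_p\,i_p^{-1}$: the $p$-th coordinates of $x+y$ and of $cx$ are $x_p+y_p$ and $cx_p$, while $\conj{(x_p+y_p)}=\conj{x}_p+\conj{y}_p$ and $\conj{(cx_p)}=\conj{c}\,\conj{x}_p$ in $\mathfrak{F}$; substituting these and then splitting the sum (respectively factoring out $\conj{c}$) gives the two identities. I expect the only genuinely delicate point to be the index inversion $p\mapsto p^{-1}$ in (i) and its reuse inside (ii) — specifically, keeping straight that invertivity is precisely what makes the two inversion-twists $\sgn(p,p^{-1})$ and $\sgn(p^{-1},p)$ cancel in (ii); everything else is bookkeeping with the definitions.
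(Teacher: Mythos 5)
Your proof is correct and follows essentially the same route as the paper: part (i) by substituting $i_p^{-1}=\sgn(p,p^{-1})i_{p^{-1}}$ and reindexing $p\mapsto p^{-1}$, part (ii) by applying (i) twice and invoking invertivity to make $\sgn(p^{-1},p)\sgn(p,p^{-1})=1$, and parts (iii)--(iv) by unwinding the definition term by term. The one extra observation you make — that invertivity is not consumed in (i) once $\conj{x}$ is defined, but is exactly what collapses the coefficient in (ii) — is accurate and a nice clarification of where the hypothesis does its work.
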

  \begin{proof}
   \begin{enumerate}
    \item[]
    \item[(i)] Let $q^{-1}=p.$ Then 
     \begin{align*}
       \conj{x}&=\Sum{q}\conj{x}_qi_q^{-1}\\
                &=\Sum{q}\conj{x}_q\sgn\left(q,q^{-1}\right)i_{q^{-1}}\\
                &=\Sum{p}\sgn\left(p^{-1},p\right)\conj{x}_{p^{-1}}i_p
     \end{align*}
    \item[(ii)] Let $z=\conj{x}.$ Then
     \begin{align*} z&=\Sum{p}\sgn\left(p^{-1},p\right)\conj{x}_{p^{-1}}i_p\\
                           &=\Sum{p}z_pi_p
       \end{align*}
     where $z_p=\sgn\left(p^{-1},p\right)\conj{x}_{p^{-1}}.$ Then $z_{p^{-1}}=\sgn\left(p,p^{-1}\right)\conj{x}_p,$ and
     \begin{align*}
     \conj{z}_{p^{-1}}&=\sgn\left(p,p^{-1}\right)x_p\text{.\ So}\\ 
     \conj{\conj{x}}&=\conj{z}\\
              &=\Sum{p}\sgn\left(p^{-1},p\right)\conj{z}_{p^{-1}}i_p\\
              &=\Sum{p}\sgn\left(p^{-1},p\right)\sgn\left(p,p^{-1}\right)x_pi_p \\
              &=\Sum{p}x_pi_p=x
       \end{align*}
    \item[(iii)] 
     \begin{align*}
        \conj{\left(x+y\right)}&=\Sum{p}\conj{\left(x_p+y_p\right)}i_p^{-1}\\
                                    &=\Sum{p}\left(\conj{x}_p+\conj{y}_p\right)i_p^{-1}\\
                                    &=\Sum{p}\conj{x}_pi_p^{-1} +\Sum{p}\conj{y}_pi_p^{-1}\\
                                    &=\conj{x}+\conj{y}.
       \end{align*}
    \item[(iv)] 
     \begin{align*}
        \conj{(cx)}&=\Sum{p}\conj{(cx_p)}i_p^{-1}\\
                  &=\Sum{p}\conj{c}\conj{x}_pi_p^{-1}\\
                  &=\conj{c}\Sum{p}\conj{x}_pi_p^{-1}\\
                  &=\conj{c}\conj{x}
       \end{align*}
   \end{enumerate}
  \end{proof}
  
\section{Proper Sign Functions}
  \begin{definition}\label{D:proper}
   The statement that the twist $\sgn$ on $G$ is  \emph{proper} means that if $p,q\in G,$ then
   \begin{enumerate}
    \item[(1)] $\sgn(p,q)\sgn\left(q,q^{-1}\right)=\sgn\left(pq,q^{-1}\right)$
    \item[(2)] $\sgn\left(p^{-1},p\right)\sgn(p,q)=\sgn\left(p^{-1},pq\right).$
   \end{enumerate}
  \end{definition}
  \begin{theorem}\label{T:assocprod}
   Every positive associative twist is proper.
  \end{theorem}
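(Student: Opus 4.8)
The plan is to read off both clauses of Definition~\ref{D:proper} from the single associativity identity of Definition~\ref{D:associative}, by specializing one of its three arguments to an inverse so that an identity element $e$ is forced into an argument of $\sgn$, and then tidying up with the fact that every value of $\sgn$ lies in $\{-1,1\}$ and hence squares to $1$. The only ingredient that is not purely formal is that a positive associative twist is \emph{identive}, so that $\sgn(p,e)=\sgn(e,p)=1$ for all $p\in G$; but this is precisely Corollary~\ref{C:identive}.

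First I would establish clause (1). Applying the identity $\sgn(p,q)\sgn(pq,r)=\sgn(p,qr)\sgn(q,r)$ with $r:=q^{-1}$, and using $qq^{-1}=e$ together with $\sgn(p,e)=1$, the right-hand side collapses and one is left with $\sgn(p,q)\sgn(pq,q^{-1})=\sgn(q,q^{-1})$. Multiplying this through first by $\sgn(pq,q^{-1})$ and then by $\sgn(q,q^{-1})$ — each a harmless multiplication by a factor of $\pm1$ — rearranges it into $\sgn(p,q)\sgn(q,q^{-1})=\sgn(pq,q^{-1})$, which is clause (1).

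Next I would establish clause (2) by the mirror-image substitution: apply the associativity identity with $p$, $q$, $r$ replaced by $p^{-1}$, $p$, $q$ respectively. Since $p^{-1}p=e$ and $\sgn(e,q)=1$, the left-hand side collapses to $\sgn(p^{-1},p)=\sgn(p^{-1},pq)\sgn(p,q)$. Multiplying both sides by $\sgn(p,q)$ then gives $\sgn(p^{-1},p)\sgn(p,q)=\sgn(p^{-1},pq)$, which is clause (2).

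I do not expect any genuine obstacle. Once the correct substitution forces an $e$ into an argument of $\sgn$, the identive property (and hence positivity) does the real work, and everything after that is bookkeeping with signs. The only point requiring the slightest care is choosing, for each clause, which of the three arguments to specialize to an inverse so that the unwanted factors cancel and the surviving terms match the stated form.
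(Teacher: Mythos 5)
Your proof is correct and takes essentially the same approach as the paper: both substitute $r=q^{-1}$ to obtain clause (1) and $(p,q,r)\mapsto(p^{-1},p,q)$ to obtain clause (2), using identivity (Corollary~\ref{C:identive}) to kill the factor involving $e$. The only cosmetic difference is that the paper applies the equivalent reformulation of associativity from Theorem~\ref{T:altAssoc}, which yields each clause in a single chain of equalities without the final sign-squaring rearrangement you perform.
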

  \begin{proof}\qquad\\
    \begin{enumerate}
     \item[(1)] $\sgn(p,q)\sgn\left(q,q^{-1}\right)
                 =\sgn\left(pq,q^{-1}\right)\sgn\left(p,qq^{-1}\right)\\
                 =\sgn\left(pq,q^{-1}\right)\sgn(p,e)=\sgn\left(pq,q^{-1}\right)\sgn(e,e)
                 =\sgn\left(pq,q^{-1}\right)$
     \item[(2)] $\sgn\left(p^{-1},p\right)\sgn(p,q)
                 =\sgn\left(p^{-1}p,q\right)\sgn\left(p^{-1},pq\right)\\
                 =\sgn(e,q)\sgn\left(p^{-1},pq\right)=\sgn(e,e)\sgn\left(p^{-1},pq\right)
                 =\sgn\left(p^{-1},pq\right)$
     \end{enumerate}
  \end{proof}
  \begin{theorem}\label{T:conjid}
   Every proper twist is positive, identive and invertive.
  \end{theorem}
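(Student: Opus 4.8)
The plan is to derive each of the three properties separately by substituting convenient special values ($e$, or an inverse pair) into the two defining identities of a proper twist, using throughout that any quantity taking values in $\{-1,1\}$ is cancellable, and that $ab=1$ with $a,b\in\{-1,1\}$ forces $a=b$.

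First I would establish that $\sgn$ is identive. Setting $q=e$ in part (2) of Definition~\ref{D:proper} gives $\sgn(p^{-1},p)\sgn(p,e)=\sgn(p^{-1},pe)=\sgn(p^{-1},p)$; cancelling the factor $\sgn(p^{-1},p)\in\{-1,1\}$ yields $\sgn(p,e)=1$ for every $p\in G$. Symmetrically, setting $p=e$ in part (1) gives $\sgn(e,q)\sgn(q,q^{-1})=\sgn(eq,q^{-1})=\sgn(q,q^{-1})$, and cancelling $\sgn(q,q^{-1})$ yields $\sgn(e,q)=1$ for every $q\in G$. Together these say $\sgn$ is identive, and in particular the case $p=q=e$ gives $\sgn(e,e)=1$, so $\sgn$ is positive as well.

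Finally, for invertivity I would substitute $q=p^{-1}$ into part (1) of Definition~\ref{D:proper}, so that $q^{-1}=p$ and the first argument on the right-hand side collapses: $\sgn(p,p^{-1})\sgn(p^{-1},p)=\sgn(pp^{-1},p)=\sgn(e,p)$, which equals $1$ by the identive property just proved. Since $\sgn(p,p^{-1})$ and $\sgn(p^{-1},p)$ both lie in $\{-1,1\}$ and their product is $1$, they must be equal, i.e., $\sgn(p,p^{-1})=\sgn(p^{-1},p)$, which is exactly the invertive condition of Definition~\ref{D:invertive}.

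There is no real obstacle here; the only thing to watch is the asymmetry between the two parts of the properness definition — part (1) must be specialized at $p=e$ while part (2) must be specialized at $q=e$ — and, in the last step, that one substitutes $q=p^{-1}$ rather than $p=p^{-1}$, so that the group products $pp^{-1}$ and $eq$ simplify in the way the argument needs.
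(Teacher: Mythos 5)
Your proof is correct and takes essentially the same route as the paper: both specialize part (1) at $p=e$ to get $\sgn(e,q)=1$, specialize part (2) at $q=e$ to get $\sgn(p,e)=1$, and specialize part (1) at $q=p^{-1}$ to force $\sgn(p,p^{-1})=\sgn(p^{-1},p)$. The one small difference is that the paper derives $\sgn(e,e)=1$ by a separate substitution ($q=e$ in part (1)) before establishing identivity, whereas you observe, a bit more economically, that positivity is simply the $p=q=e$ instance of the identive property you have already proved.
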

  \begin{proof}
    Suppose $\sgn$ is proper. \\
    Then $\sgn(p,e)\sgn\left(e,e^{-1}\right)=\sgn\left(pe,e^{-1}\right).$ That is, $\sgn(p,e)\sgn(e,e)=\sgn(p,e).$ So, $\sgn(e,e)=1.$ Thus, $\sgn$ is positive.\\
    Furthermore, $\sgn(e,q)\sgn\left(q,q^{-1}\right)=\sgn\left(eq,q^{-1}\right)=\sgn\left(q,q^{-1}\right).$ So, $\sgn(e,q)=1.$\\
    Also, $\sgn\left(p^{-1},p\right)\sgn(p,e)=\sgn\left(p^{-1},pe\right)=\sgn\left(p^{-1},p\right).$ So $\sgn(p,e)=1.$ Thus, $\sgn$ is identive.\\
    And since $\sgn\left(p,p^{-1}\right)\sgn\left(p^{-1},p\right)=\sgn\left(pp^{-1},p\right)=\sgn(e,p)=1,$ it follows that $\sgn\left(p,p^{-1}\right)=\sgn\left(p^{-1},p\right).$ So $\sgn$ is invertive.
  \end{proof}

  \begin{theorem}
  If $\sgn$ is a proper twist on $G,$ then
   $\conj{(i_pi_q)}=\conj{i_q}\conj{i_p}$ for all $p,q\in G.$
  \end{theorem}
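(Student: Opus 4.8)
The plan is to reduce the identity $\conj{(i_pi_q)}=\conj{i_q}\,\conj{i_p}$ to a purely scalar identity in the values of $\sgn$, and then derive that scalar identity from a short chain of applications of the two clauses of Definition~\ref{D:proper}, using Theorem~\ref{T:conjid} to know that a proper twist is in particular invertive (so that conjugation is even defined) as well as positive and identive.

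First I would pin down the conjugate of a single basis vector. Since $\sgn$ is invertive we have $i_r^{-1}=\sgn(r,r^{-1})i_{r^{-1}}=\sgn(r^{-1},r)i_{r^{-1}}$, and Definition~\ref{D:conjugate} (equivalently Theorem~\ref{T:conjugate}(i)) gives $\conj{i_r}=i_r^{-1}$. For brevity write $c(r)$ for the common value $\sgn(r,r^{-1})=\sgn(r^{-1},r)$; then $c(r)^2=1$, $c(r^{-1})=c(r)$, and $\conj{i_r}=c(r)\,i_{r^{-1}}$.

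Next, expand both sides over the basis. Because $i_pi_q=\sgn(p,q)i_{pq}$ and the field involution fixes $\pm1$ (Theorem~\ref{T:conjugate}(iv)), the left side is $\conj{(i_pi_q)}=\sgn(p,q)\,\conj{i_{pq}}=\sgn(p,q)\,c(pq)\,i_{(pq)^{-1}}$, while the right side is $\conj{i_q}\,\conj{i_p}=\bigl(c(q)i_{q^{-1}}\bigr)\bigl(c(p)i_{p^{-1}}\bigr)=c(p)c(q)\,\sgn(q^{-1},p^{-1})\,i_{q^{-1}p^{-1}}$. Since $(pq)^{-1}=q^{-1}p^{-1}$, both sides are scalar multiples of the same basis vector, so the theorem is equivalent to the scalar identity
\[
\sgn(p,q)\,c(pq)=c(p)\,c(q)\,\sgn(q^{-1},p^{-1}).
\]

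Finally I would establish this by telescoping. Clause~(1) of Definition~\ref{D:proper} applied to the pair $(q^{-1},p^{-1})$ gives $\sgn(q^{-1},p^{-1})\,c(p)=\sgn((pq)^{-1},p)$; clause~(2) applied to the pair $(pq,q^{-1})$ gives $\sgn((pq)^{-1},p)=c(pq)\,\sgn(pq,q^{-1})$; and clause~(1) applied to $(p,q)$ gives $\sgn(pq,q^{-1})=\sgn(p,q)\,c(q)$. Substituting these three relations in turn into $c(p)c(q)\,\sgn(q^{-1},p^{-1})$ and cancelling the squares $c(p)^2=c(q)^2=1$ collapses the right-hand side to $\sgn(p,q)\,c(pq)$, which is the left-hand side. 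I expect the only real difficulty to be the bookkeeping in this last step: choosing the substitutions in clauses~(1) and~(2) so that every intermediate factor telescopes, and remembering that it is the invertive content of Theorem~\ref{T:conjid} that makes $c(r)=\sgn(r,r^{-1})=\sgn(r^{-1},r)$ well defined and keeps the chain consistent.
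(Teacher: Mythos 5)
Your proposal is correct and follows essentially the same path as the paper: expand $\conj{(i_pi_q)}$ and $\conj{i_q}\conj{i_p}$ as scalar multiples of $i_{(pq)^{-1}}$, reduce the claim to the scalar identity $\sgn(p,q)\sgn\bigl((pq)^{-1},pq\bigr)=\sgn\bigl(q^{-1},q\bigr)\sgn\bigl(p^{-1},p\bigr)\sgn\bigl(q^{-1},p^{-1}\bigr)$, and verify it by a telescoping chain using clause~(1) at $(p,q)$, clause~(2) at $(pq,q^{-1})$, and clause~(1) at $(q^{-1},p^{-1})$. The paper runs exactly these three steps from the left-hand side to the right-hand side (inserting squares of $\sgn$ values to enable each cancellation), while you run the same chain in the opposite direction and package the bookkeeping in the notation $c(r)=\sgn(r,r^{-1})$; the substance is identical.
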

  \begin{proof}
 Since $i_pi_q=\sgn(p,q)i_{pq},$ $\conj{\left(i_pi_q\right)}
   =\conj{\left(\sgn(p,q)i_{pq}\right)}=\sgn(p,q)\conj{\left(i_{pq}\right)}\\
   =\sgn(p,q)i_{pq}^{-1} =\sgn(p,q)\sgn\left(\left(pq\right)^{-1},pq\right)i_{(pq)^{-1}}.$\\
   On the other hand,  
   \begin{align*}
   \conj{i_q}\conj{i_p}=i_q^{-1}i_p^{-1}&=
   \left(\sgn\left(q^{-1},q\right)i_{q^{-1}}\right)\left(\sgn\left(p^{-1},p\right)i_{p^{-1}}\right)\\
   &=\sgn\left(q^{-1},q\right)\sgn\left(p^{-1},p\right)\sgn\left(q^{-1},p^{-1}\right)i_{q^{-1}p^{-1}}\\
   &=\sgn\left(q^{-1},q\right)\sgn\left(p^{-1},p\right)\sgn\left(q^{-1},p^{-1}\right)i_{(pq)^{-1}}
   \end{align*}
   Therefore, in order to show that $\conj{(i_pi_q)}
   =\conj{i_q}\conj{i_p},$ it is sufficient to show that\\
   $\sgn(p,q)\sgn\left(\left(pq\right)^{-1},pq\right)=\sgn\left(q^{-1},q\right)\sgn\left(p^{-1},p\right)\sgn\left(q^{-1},p^{-1}\right).$\\
   Beginning with the expression on the left,
   \begin{align*}
   \sgn(p,q)\sgn\left((pq)^{-1},pq\right) 
&=\sgn(p,q)\sgn\left(q,q^{-1}\right)\sgn\left(q,q^{-1}\right)\sgn\left((pq)^{-1},pq\right)\\
&=\sgn\left(pq,q^{-1}\right)\sgn\left(q,q^{-1}\right)\sgn\left((pq)^{-1},pq\right)\\
&=\sgn\left((pq)^{-1},pq\right)\sgn\left(pq,q^{-1}\right)\sgn\left(q,q^{-1}\right)\\
&=\sgn\left((pq)^{-1},p\right)\sgn\left(q,q^{-1}\right)\\
&=\sgn\left((pq)^{-1},p\right)\sgn\left(p,p^{-1}\right)\sgn\left(p,p^{-1}\right)\sgn\left(q,q^{-1}\right)\\
&=\sgn\left(q^{-1}p^{-1},p\right)\sgn\left(p,p^{-1}\right)\sgn\left(p,p^{-1}\right)\sgn\left(q,q^{-1}\right)\\
&=\sgn\left(q^{-1},p^{-1}\right)\sgn\left(p,p^{-1}\right)\sgn\left(q,q^{-1}\right)\\
&=\sgn\left(q^{-1},p^{-1}\right)\sgn\left(p^{-1},p\right)\sgn\left(q^{-1},q\right)
   \end{align*}
  \end{proof}
  \begin{theorem}
  If $\sgn$ is a proper twist on $G$ and if $x,y\in V,$ then
  $\conj{(xy)}=\conj{y}\conj{x}.$
  \end{theorem}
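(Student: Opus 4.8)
The plan is to bootstrap from the preceding theorem, which gives $\conj{(i_pi_q)}=\conj{i_q}\,\conj{i_p}$ for basis vectors, up to arbitrary $x,y\in V$ using the additive and semilinear behaviour of conjugation recorded in Theorem~\ref{T:conjugate}. Observe first that, since $\sgn$ is proper, it is positive, identive and invertive by Theorem~\ref{T:conjid}, so conjugation is defined and every part of Theorem~\ref{T:conjugate} is available; in particular $\conj{i_p}=i_p^{-1}$, this being the case $x=i_p$ of Definition~\ref{D:conjugate}.

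First I would expand both sides in the basis $\mathcal{B}$. Writing $x=\Sum{p}x_pi_p$ and $y=\Sum{q}y_qi_q$, the definition of the extended product gives $xy=\Sum{p}\Sum{q}x_py_q\,i_pi_q$. Applying Theorem~\ref{T:conjugate}(iii) to this finite double sum and then Theorem~\ref{T:conjugate}(iv) termwise,
\begin{align*}
\conj{(xy)}&=\Sum{p}\Sum{q}\conj{(x_py_q)}\,\conj{(i_pi_q)}\\
           &=\Sum{p}\Sum{q}\conj{y_q}\,\conj{x_p}\,\conj{i_q}\,\conj{i_p},
\end{align*}
where the second line uses the preceding theorem to rewrite $\conj{(i_pi_q)}$ and the commutativity and multiplicativity of the scalar conjugation on $\mathfrak{F}$ to reorder the scalars as $\conj{y_q}\,\conj{x_p}$.

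Next I would expand $\conj{y}\,\conj{x}$ directly. Since $\conj{y}=\Sum{q}\conj{y_q}\,i_q^{-1}$ and $\conj{x}=\Sum{p}\conj{x_p}\,i_p^{-1}$, the definition of the extended product together with the scalar identity $(cu)v=c(uv)$ noted just after the definition of the product gives
\[
\conj{y}\,\conj{x}=\Sum{q}\Sum{p}\conj{y_q}\,\conj{x_p}\,i_q^{-1}i_p^{-1}=\Sum{q}\Sum{p}\conj{y_q}\,\conj{x_p}\,\conj{i_q}\,\conj{i_p},
\]
which is term for term the same double sum obtained above. Hence $\conj{(xy)}=\conj{y}\,\conj{x}$.

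I do not expect a genuine obstacle: the entire arithmetic content is already packed into the basis identity $\conj{(i_pi_q)}=\conj{i_q}\,\conj{i_p}$, which is where properness of $\sgn$ was used, and what remains is the routine bookkeeping of pushing an additive, semilinear map through a finite bilinear expansion. The only minor care needed is to confirm that conjugation distributes over the double sum (immediate by induction from Theorem~\ref{T:conjugate}(iii)) and that field scalars may be pulled through the algebra product and reordered freely, which is exactly the $(cx)y=x(cy)=c(xy)$ remark following the definition of the extended product.
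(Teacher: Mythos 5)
Your argument matches the paper's proof: both expand $xy$ in the basis, push conjugation through the double sum via Theorem~\ref{T:conjugate}(iii)--(iv), invoke the basis identity $\conj{(i_pi_q)}=\conj{i_q}\,\conj{i_p}$ from the preceding theorem, and reassemble the result as $\conj{y}\,\conj{x}$. The only cosmetic difference is that you expand $\conj{y}\,\conj{x}$ separately and match term by term, whereas the paper re-factors the double sum back into a product; these are the same computation read in opposite directions.
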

  \begin{proof}
   \begin{align*}
     \conj{(xy)}&=\conj{\left(\left(\Sum{p}x_pi_p\right)\left(\Sum{q}y_qi_q\right)\right)}\\
                     &=\conj{\left(\Sum{p}\Sum{q}x_py_qi_pi_q\right)}\\
                     &=\Sum{p}\Sum{q}\conj{(x_py_qi_pi_q)}\\
                     &=\Sum{q}\Sum{p}\conj{y_q}\,\conj{x_p}i_q^{-1}i_p^{-1}\\
                     &=\left(\Sum{q}\conj{y_q}i_q^{-1}\right)\left(\Sum{p}\conj{x_p}i_p^{-1}\right)\\
                     &=\conj{y}\conj{x}
   \end{align*}
  \end{proof}
  \begin{definition}
   The \emph{inner product} of elements $x$ and  $y$ in $V$ is $\ip{x}{y}
   =\Sum{p}x_p\conj{y}_p.$
  \end{definition}
  \begin{theorem}\label{T:zero}
   $\left[1-\sgn(p,p)\right]\ip{x}{i_px}=0$ provided $\sgn$ is proper and $p=p^{-1}.$
  \end{theorem}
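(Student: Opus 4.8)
The plan is to expand $\ip{x}{i_px}$ in coordinates and then exploit a reindexing of the resulting sum. First I would compute the coordinates of $i_px$. From $i_px=\Sum{q}x_q\sgn(p,q)i_{pq}$, the coefficient of $i_r$ is read off by solving $pq=r$, i.e.\ $q=p^{-1}r$, so the $r$-th coordinate of $i_px$ is $\sgn(p,p^{-1}r)\,x_{p^{-1}r}$; the hypothesis $p=p^{-1}$ reduces this to $\sgn(p,pr)\,x_{pr}$. Since the inner product is $\ip{x}{y}=\Sum{r}x_r\conj{y}_r$ with $\conj{y}_r$ the scalar conjugate of the $r$-th coordinate of $y$, and since $\conj{\sgn(p,pr)}=\sgn(p,pr)$ (the values $\pm1$ are fixed by any field involution), this gives
\[
\ip{x}{i_px}=\Sum{r}x_r\,\sgn(p,pr)\,\conj{x}_{pr}.
\]

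Next I would bring in properness: part (2) of Definition~\ref{D:proper}, applied with $p^{-1}=p$ and second argument $r$, reads $\sgn(p,p)\,\sgn(p,r)=\sgn(p,pr)$. Substituting this,
\[
\ip{x}{i_px}=\sgn(p,p)\Sum{r}x_r\,\sgn(p,r)\,\conj{x}_{pr}.
\]
Now reindex: left translation $r\mapsto pr$ is a bijection of $G$ (its own inverse, since $p^2=e$), so replacing $r$ by $pr$ in the remaining sum turns it into $\Sum{r}x_{pr}\,\sgn(p,pr)\,\conj{x}_r$, which is exactly $\conj{\ip{x}{i_px}}$. Hence $\ip{x}{i_px}=\sgn(p,p)\,\conj{\ip{x}{i_px}}$; over $\mathbb{R}$, where the scalar involution is trivial, this says $\ip{x}{i_px}=\sgn(p,p)\,\ip{x}{i_px}$, i.e.\ $\bigl[1-\sgn(p,p)\bigr]\ip{x}{i_px}=0$. (Equivalently, one may pair each $r$ with $pr$: for $p\neq e$ this is a fixed-point-free involution of $G$, and the contribution of $\{r,pr\}$ to $\bigl[1-\sgn(p,p)\bigr]\ip{x}{i_px}$ simplifies, via the same identity, to $\sgn(p,r)\bigl(1-\sgn(p,p)\bigr)\bigl(x_{pr}\conj{x}_r-x_r\conj{x}_{pr}\bigr)$, which vanishes termwise over $\mathbb{R}$.)

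The individual steps are all short, so there is no single heavy computation; the delicate points are the coordinate formula for $i_px$, the bookkeeping in the reindexing, and---most of all---the fact that the conclusion genuinely uses that the scalars are real. Over a field with a nontrivial conjugation the argument only shows that $\ip{x}{i_px}$ is negated by the scalar involution when $\sgn(p,p)=-1$, and that need not force it to vanish: for $\mathfrak{F}=\mathbb{C}$ with complex conjugation, $G=\{e,p\}$ with $p^2=e$ and $\sgn(p,p)=-1$, the element $x=i_e+\sqrt{-1}\,i_p$ gives $\ip{x}{i_px}=2\sqrt{-1}\neq0$. So Theorem~\ref{T:zero} is to be read with $\mathfrak{F}=\mathbb{R}$ (equivalently, scalar involution trivial and $\operatorname{char}\mathfrak{F}\neq2$). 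The case $p=e$ requires no separate argument: then $\sgn(p,p)=\sgn(e,e)=1$ by Theorem~\ref{T:conjid}, the bracket is $0$, and the reindexing above is simply a trivial identity.
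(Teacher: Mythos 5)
Your proof is correct and follows essentially the same route as the paper: compute the $r$-th coordinate of $i_px$ as $\sgn(p,pr)x_{pr}$, invoke properness condition (2) (with $p=p^{-1}$) to write $\sgn(p,pr)=\sgn(p,p)\sgn(p,r)$, and exploit the involution $r\mapsto pr$ of $G$. The only presentational difference is that the paper derives the doubling identity $2\ip{x}{i_px}=\left[1+\sgn(p,p)\right]\ip{x}{i_px}$ and subtracts, whereas you go directly to the fixed-point relation $\ip{x}{i_px}=\sgn(p,p)\,\ip{x}{i_px}$; the two are trivially equivalent, and yours is a little more economical. Your caveat about the scalar field is a genuine observation that the paper glosses over: the paper's proof silently writes $\ip{x}{i_px}=\sum_r\sgn(p,pr)x_rx_{pr}$ without the conjugate on the second factor, which is only valid when the scalar involution is trivial. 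Your $\mathbb{C}$ counterexample with $x=i_e+\sqrt{-1}\,i_p$ and $\sgn(p,p)=-1$ correctly shows the statement fails over fields with nontrivial conjugation, so Theorem~\ref{T:zero} should indeed be read with $\mathfrak{F}=\mathbb{R}$ (as the later applications to Cayley--Dickson algebras and $\ell^2$ confirm). Your remark on $p=e$ is also correct but, as you note, vacuous since properness forces $\sgn(e,e)=1$.
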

  \begin{proof}
   $i_px=\sum_qx_qi_pi_q=\sum_q\sgn(p,q)x_qi_{pq}=\sum_r\sgn(p,pr)x_{pr}i_r$ where $q=pr.$
   
   \begin{align*}
   2\ip{x}{i_px}&=2\sum_r\sgn(p,pr)x_rx_{pr}\\
                &=2\,\sgn(p,p)\sum_r\sgn(p,r)x_rx_{pr}\\
		&=\sgn(p,p)\sum_r\sgn(p,r)x_rx_{pr}+\sgn(p,p)\sum_q\sgn(p,q)x_qx_{pq}\\
		&=\sgn(p,p)\sum_r\sgn(p,r)x_rx_{pr}+\sgn(p,p)\sum_r\sgn(p,pr)x_{pr}x_r\\
		&=\sgn(p,p)\sum_r\sgn(p,r)x_rx_{pr}+\sgn(p,p)\sgn(p,p)\sum_r\sgn(p,r)x_rx_{pr}\\
		&=\left[\sgn(p,p)+1\right]\sum_r\sgn(p,r)x_rx_{pr}\\
		&=\left[\sgn(p,p)+1\right]\sgn(p,p)\sum_r\sgn(p,pr)x_rx_{pr}\\
		&=\left[1+\sgn(p,p)\right]\ip{x}{i_px}
   \end{align*}
   Thus $\left[1-\sgn(p,p)\right]\ip{x}{i_px}=0.$
  \end{proof}
  \begin{corollary}
   If $p=p^{-1},$ $\sgn$ is proper and $\sgn(p,p)=-1,$ and if $x\in V$, then $\ip{x}{i_px}=0.$
  \end{corollary}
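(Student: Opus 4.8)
The plan is to read the conclusion off directly from Theorem~\ref{T:zero}, since the present hypotheses are exactly its hypotheses together with one extra normalization. Theorem~\ref{T:zero} gives $\left[1-\sgn(p,p)\right]\ip{x}{i_px}=0$ for every $x\in V$, provided $\sgn$ is proper and $p=p^{-1}$; both of these are assumed here. So the entire content of the proof is to cancel the scalar factor $1-\sgn(p,p)$.

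Under the additional assumption $\sgn(p,p)=-1$, that factor equals $1-(-1)=2$, a nonzero element of the ground field (the field is $\mathbb{R}$, or more generally any field of characteristic $\neq 2$; note that the proof of Theorem~\ref{T:zero} itself already works with $2\ip{x}{i_px}$). Multiplying the identity $2\,\ip{x}{i_px}=0$ by $2^{-1}\in\mathfrak{F}$ then yields $\ip{x}{i_px}=0$, as claimed.

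There is essentially no obstacle: the corollary is a one-line specialization of the preceding theorem. The only point that requires even momentary care is the degenerate characteristic-$2$ situation, in which $1-\sgn(p,p)$ could vanish despite $\sgn(p,p)=-1$; this case is tacitly excluded here, consistent with the rest of the section where division by $2$ is used freely.
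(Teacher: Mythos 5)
Your proof is correct and is exactly the one the paper intends: the corollary is left without an explicit proof precisely because it is the immediate specialization of Theorem~\ref{T:zero} to $\sgn(p,p)=-1$, followed by division by the nonzero scalar $2$. Your remark about characteristic~$2$ is a reasonable caveat but does not change the substance.
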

  \begin{theorem}\label{T:zero1}
   $\left[1-\sgn(p,p)\right]\ip{x}{x\,i_p}=0$ provided $\sgn$ is proper and $p=p^{-1}$
  \end{theorem}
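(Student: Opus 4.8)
The plan is to run the proof of Theorem~\ref{T:zero} almost verbatim, with the left translation $i_p x$ replaced by the right translation $x\,i_p$ and the two arguments of $\sgn$ interchanged throughout.

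\textbf{Step 1 (reindex the right translation).} First I would expand $x\,i_p=\Sum{q}x_q i_q i_p=\Sum{q}\sgn(q,p)x_q i_{qp}$ and substitute $r=qp$; since $p=p^{-1}$ this is the bijection $q=rp$ of $G$, so $x\,i_p=\sum_r \sgn(rp,p)x_{rp}\,i_r$. Reading off the $i_r$-coefficient gives $\ip{x}{x\,i_p}=\sum_r \sgn(rp,p)x_r x_{rp}$, treating the scalars as self-conjugate exactly as the proof of Theorem~\ref{T:zero} does.

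\textbf{Step 2 (the properness identity).} The proof of Theorem~\ref{T:zero} used clause~(2) of Definition~\ref{D:proper}; here the fixed index $p$ sits on the \emph{right} of $\sgn$, so I would instead use clause~(1), $\sgn(p,q)\sgn(q,q^{-1})=\sgn(pq,q^{-1})$, evaluated at the pair $(r,p)$ with $p=p^{-1}$. This gives $\sgn(r,p)\sgn(p,p)=\sgn(rp,p)$, hence also $\sgn(r,p)=\sgn(p,p)\sgn(rp,p)$ since $\sgn(p,p)=\pm1$.

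\textbf{Step 3 (the doubling computation).} Starting from $2\ip{x}{x\,i_p}=2\sum_r\sgn(rp,p)x_r x_{rp}$, I would use the identity of Step~2 to rewrite this as $2\sgn(p,p)\sum_r\sgn(r,p)x_r x_{rp}$, split it into two equal copies, reindex one copy by $q=rp$ (so that $qp=r$, using $p^2=e$), apply the Step~2 identity to that copy, and collect terms to obtain $[\,\sgn(p,p)+1\,]\sum_r\sgn(r,p)x_r x_{rp}$. Converting $\sgn(r,p)$ back to $\sgn(p,p)\sgn(rp,p)$ and using $\sgn(p,p)^2=1$ turns this into $[\,1+\sgn(p,p)\,]\ip{x}{x\,i_p}$, so $2\ip{x}{x\,i_p}=[\,1+\sgn(p,p)\,]\ip{x}{x\,i_p}$ and rearranging yields $[\,1-\sgn(p,p)\,]\ip{x}{x\,i_p}=0$.

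I do not expect a genuine obstacle: the whole argument is formal manipulation of finite sums over $G$. The only point that needs attention is bookkeeping---choosing clause~(1) rather than clause~(2) of properness because the fixed index now sits on the right, and checking that each substitution $q\mapsto rp$ is a bijection of $G$ with $qp=r$, which is precisely where the hypothesis $p=p^{-1}$ (equivalently $p^2=e$) enters. If one prefers not to assume the scalars are self-conjugate, one inserts the appropriate conjugation bars, but since the proof of Theorem~\ref{T:zero} already suppresses them I would keep the same convention.
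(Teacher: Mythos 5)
Your proposal is correct and is precisely the "similar proof" the paper gestures at: expand $x\,i_p$ and reindex, use clause~(1) of Definition~\ref{D:proper} (rather than clause~(2), since $p$ now sits in the right slot of $\sgn$) to relate $\sgn(rp,p)$ and $\sgn(r,p)$, and then run the same doubling/reindexing computation as in Theorem~\ref{T:zero}. The bookkeeping — including where $p=p^{-1}$ enters and why $\sgn(p,p)^2=1$ is needed — is all sound.
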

 \begin{proof}
   The proof is similar to that of Theorem \ref{T:zero}.
 \end{proof}
  \begin{theorem}\label{T:product}
   If $\sgn$ is a proper twist on $G,$ and if $x,y\in V,$ then
   \[ xy=\Sum{r}\ip{x}{i_r\conj{y}}i_r =\Sum{r}\ip{y}{\conj{x}i_r}i_r
   \]
  \end{theorem}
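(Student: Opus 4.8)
The plan is to verify each of the two asserted identities coefficient by coefficient, reducing each to one of the two defining conditions of a proper twist. First I would read off the $i_r$-coefficient of the product: from the definition $xy=\Sum{p}\Sum{q}x_py_q\sgn(p,q)i_{pq}$, grouping the terms with $pq=r$ gives
\[
(xy)_r=\Sum{p}x_py_{p^{-1}r}\,\sgn(p,p^{-1}r)=\Sum{q}x_{rq^{-1}}y_q\,\sgn(rq^{-1},q),
\]
the two expressions coming from parametrizing the pairs with $pq=r$ by $p$ and by $q$. Since two elements of $V$ with equal coefficients coincide, it suffices to show that $\ip{x}{i_r\conj y}$ equals the first expression and $\ip{y}{\conj x i_r}$ the second, for every $r\in G$.

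For the first identity I would use the alternative form of the conjugate, $\conj y=\Sum{p}\sgn(p^{-1},p)\conj{y}_{p^{-1}}i_p$ (Theorem~\ref{T:conjugate}(i)). Then $i_r\conj y=\Sum{p}\sgn(p^{-1},p)\sgn(r,p)\conj{y}_{p^{-1}}i_{rp}$, and reindexing by $s=rp$ shows the $i_s$-coefficient of $i_r\conj y$ to be $\sgn(s^{-1}r,r^{-1}s)\sgn(r,r^{-1}s)\conj{y}_{s^{-1}r}$. Substituting into $\ip{x}{i_r\conj y}=\Sum{s}x_s\conj{(i_r\conj y)_s}$ and using that the sign factors are real while $\conj{\conj{c}}=c$ for scalars yields
\[
\ip{x}{i_r\conj y}=\Sum{s}x_s\,y_{s^{-1}r}\,\sgn(s^{-1}r,r^{-1}s)\sgn(r,r^{-1}s).
\]
Matching this against the first expression for $(xy)_r$ (with $p$ renamed $s$), the identity reduces to $\sgn(s^{-1}r,r^{-1}s)\sgn(r,r^{-1}s)=\sgn(s,s^{-1}r)$ for all $r,s\in G$. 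Putting $a=r^{-1}s$, so $s=ra$ and $s^{-1}r=a^{-1}$, this becomes $\sgn(a^{-1},a)\sgn(r,a)=\sgn(ra,a^{-1})$, which follows from condition~(1) of Definition~\ref{D:proper} (with $p=r$, $q=a$) together with invertivity $\sgn(a,a^{-1})=\sgn(a^{-1},a)$, both available for a proper twist by Theorem~\ref{T:conjid}.

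The second identity is handled symmetrically: $\conj x i_r=\Sum{p}\sgn(p^{-1},p)\sgn(p,r)\conj{x}_{p^{-1}}i_{pr}$, reindexing by $s=pr$ gives $i_s$-coefficient $\sgn(rs^{-1},sr^{-1})\sgn(sr^{-1},r)\conj{x}_{rs^{-1}}$, hence $\ip{y}{\conj x i_r}=\Sum{s}x_{rs^{-1}}y_s\,\sgn(rs^{-1},sr^{-1})\sgn(sr^{-1},r)$, and comparison with the second expression for $(xy)_r$ reduces the claim to $\sgn(rs^{-1},sr^{-1})\sgn(sr^{-1},r)=\sgn(rs^{-1},s)$. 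With $b=sr^{-1}$, so $s=br$ and $rs^{-1}=b^{-1}$, this is $\sgn(b^{-1},b)\sgn(b,r)=\sgn(b^{-1},br)$, exactly condition~(2) of Definition~\ref{D:proper} with $p=b$, $q=r$. The only real obstacle I foresee is the bookkeeping: carrying the three sign factors and the group substitutions carefully enough that each reduced identity lands precisely on a proper-twist axiom (plus invertivity in the first case). There is no conceptual subtlety, and in particular associativity of the twist is never used.
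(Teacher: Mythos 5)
Your proof is correct and is essentially the paper's own argument: both compute the $i_s$-coefficients of $i_r\conj{y}$ and $\conj{x}i_r$ after reindexing, form the inner products, and match them against the $i_r$-coefficient of $xy$ obtained by grouping the double sum over $pq=r$. The only difference is organizational — the paper invokes the two proper-twist conditions (plus invertivity) while still inside the computation of $i_r\conj{y}$ and $\conj{x}i_r$, collapsing the two sign factors into one before taking the inner product, whereas you carry both sign factors to the end and reduce the final comparison to the same two axioms.
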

  \begin{proof}
  ~\\
  (i) $\conj{y}
  =\Sum{s}\sgn\left(s,s^{-1}\right)\conj{y}_{s^{-1}}i_s.$ 
  Let $p=rs.$ Then
  \begin{align*} 
  i_r\conj{y}& =\Sum{s}\sgn\left(s,s^{-1}\right)\conj{y}_{s^{-1}}i_r i_s\\
  &=\Sum{s}\sgn(r,s)\sgn\left(s,s^{-1}\right)\conj{y}_{s^{-1}}i_{rs}\\
  &=\Sum{s}\sgn\left(rs,s^{-1}\right)\conj{y}_{s^{-1}}i_{rs}\\
  &=\Sum{p}\sgn\left(p,p^{-1}r\right)\conj{y}_{p^{-1}r}i_p
  \end{align*}
 Thus,
\begin{align*}
   \ip{x}{i_r \conj{y}}
    &=\Sum{p}\left(p,p^{-1}r\right)x_p\conj{\conj{y}}_{p^{-1}r}\\
    &=\Sum{p}\sgn\left(p,p^{-1}r\right)x_py_{p^{-1}r}
  \end{align*}
  (ii) $\conj{x}=\Sum{s}\sgn\left(s^{-1},s\right)\conj{x}_{s^{-1}}i_s.$ Let $q=sr.$ Then
   \begin{align*}
    \conj{x}i_r& =\Sum{s}\sgn\left(s^{-1},s\right)\conj{x}_{s^{-1}}i_si_r\\
                      & =\Sum{s}\sgn\left(s^{-1},s\right)\sgn(s,r)\conj{x}_{s^{-1}}i_{sr}\\
                      & =\Sum{s}\sgn\left(s^{-1},sr\right)\conj{x}_{s^{-1}}i_{sr}\\
                      & = \Sum{q}\sgn\left(rq^{-1},q\right)\conj{x}_{rq^{-1}}i_q
   \end{align*} 
   So $\ip{ y}{\conj{x}i_r } = \Sum{q}\sgn\left(rq^{-1},q\right)x_{rq^{-1}}y_q.$\\
  (iii) 
   \begin{align*}
     xy &= \Sum{p}\Sum{q}x_py_qi_pi_q\\
                  &= \Sum{p}\Sum{q}\sgn(p,q)x_py_qi_{pq}\\
                  &= \Sum{q}\Sum{p}\sgn(p,q)x_py_qi_{pq}
   \end{align*} 
Let $pq=r.$ Then $p=rq^{-1}$ and $q=p^{-1}r.$\\
   Thus,\\
   $xy=\Sum{r}\Sum{q}\sgn\left(rq^{-1},q\right)x_{rq^{-1}}y_qi_r=\Sum{r}\ip{y}{\conj{x}i_r}i_r.$\\
   And\\
   $xy=\Sum{r}\Sum{p}\sgn\left(p,p^{-1}r\right)x_py_{p^{-1}r}i_r=\Sum{r}\ip{x}{i_r\conj{y}}i_r.$
  \end{proof}
\begin{corollary}
 If $\sgn$ is proper, then $\ip{xy}{i_r}=\ip{x}{i_r\conj{y}}=\ip{\conj{x}i_r}{y}.$
\end{corollary}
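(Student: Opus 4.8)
The plan is to reduce the corollary entirely to Theorem~\ref{T:product} by reading off coordinates, since ``$\sgn$ proper'' is exactly the hypothesis of that theorem.

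First I would record the elementary fact that pairing with a basis vector extracts a coordinate: for $z=\Sum{p}z_pi_p$ the vector $i_r$ has $p$-th coordinate equal to $1$ when $p=r$ and $0$ otherwise, so $\ip{z}{i_r}=z_r$ (over $\mathbb{R}$ this needs no comment; over a field with involution one uses $\conj{1}=1$). In particular $\ip{xy}{i_r}$ is just the coefficient $(xy)_r$ of $i_r$ in the expansion of the product $xy$.

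Next I would invoke Theorem~\ref{T:product}: since $\sgn$ is proper, $xy=\Sum{r}\ip{x}{i_r\conj{y}}\,i_r=\Sum{r}\ip{y}{\conj{x}i_r}\,i_r$. Matching the coefficient of $i_r$ in each of these expansions against $(xy)_r$ gives immediately $(xy)_r=\ip{x}{i_r\conj{y}}$ and $(xy)_r=\ip{y}{\conj{x}i_r}$. Combined with the observation of the previous paragraph, this already yields $\ip{xy}{i_r}=\ip{x}{i_r\conj{y}}$, the first claimed identity.

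The only remaining point is to pass from $\ip{y}{\conj{x}i_r}$ to $\ip{\conj{x}i_r}{y}$ so as to obtain the second claimed identity. When $\mathfrak{F}=\mathbb{R}$ the inner product $\ip{u}{v}=\Sum{p}u_pv_p$ is symmetric and this is instantaneous; in general I would instead recompute $\ip{\conj{x}i_r}{y}$ directly from the expansion $\conj{x}i_r=\Sum{q}\sgn(rq^{-1},q)\conj{x}_{rq^{-1}}i_q$ derived inside the proof of Theorem~\ref{T:product}, obtaining $\ip{\conj{x}i_r}{y}=\Sum{q}\sgn(rq^{-1},q)\conj{x}_{rq^{-1}}\conj{y}_q$, and reconcile this with the formula $\ip{y}{\conj{x}i_r}=\Sum{q}\sgn(rq^{-1},q)x_{rq^{-1}}y_q$ from that same proof. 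I expect this last bookkeeping about which slot of the inner product carries the conjugation to be the only delicate step; the substance of the corollary is already contained in Theorem~\ref{T:product}.
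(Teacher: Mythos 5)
Your proof is correct and is exactly the route the paper intends: the corollary is stated without proof immediately after Theorem~\ref{T:product}, and the elementary observation that $\ip{z}{i_r}=z_r$ is precisely the bridge that turns the coefficient identities $(xy)_r=\ip{x}{i_r\conj{y}}=\ip{y}{\conj{x}i_r}$ of that theorem into the stated equalities. Your care about which slot of $\ip{\cdot}{\cdot}$ carries the conjugation in passing from $\ip{y}{\conj{x}i_r}$ to $\ip{\conj{x}i_r}{y}$ is well placed; over $\mathbb{R}$, the paper's default field, the inner product is symmetric and this step is immediate, and your sketch correctly shows that over a field with involution the two sides differ only by the overall conjugation.
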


 The twists on a group G form an abelian group $\mathfrak{S}(G),$ with the associative and proper twists forming closed subgroups $\mathfrak{S}_A(G)$ and $\mathfrak{S}_P(G)$ respectively.
  
  \begin{question}
   Do $\mathfrak{S}_A(G)$ and $\mathfrak{S}_P(G)$ have any interesting properties?
  \end{question}
  
 \section{Addendum on Associative Sign Functions}

\begin{theorem}  
   Suppose $\sgn$ is an associative twist on the group $G.$ For each $p\in G,$ let $L_p$ denote a matrix whose rows and columns are indexed by $G,$ such that if $L_p=\left(l_{rs}\right),$ 
 then $l_{rs}=\begin{cases}
                \sgn(p,s) & \text{\ if\ } r=ps\\
		0         & \text{\ otherwise}
              \end{cases}$

  Then for every $p,q\in G$, $L_pL_q=\sgn(p,q)L_{pq}.$	      
\end{theorem}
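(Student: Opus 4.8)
The plan is to compute the $(r,t)$-entry of $L_pL_q$ straight from the definition of matrix multiplication and match it against the $(r,t)$-entry of $\sgn(p,q)L_{pq}$. First I would write $(L_pL_q)_{rt}=\Sum{s}(L_p)_{rs}(L_q)_{st}$. By the definition of $L_p$, the factor $(L_p)_{rs}$ vanishes unless $r=ps$, i.e.\ unless $s=p^{-1}r$, in which case it equals $\sgn(p,p^{-1}r)=\sgn(p,s)$; similarly $(L_q)_{st}$ vanishes unless $s=qt$, in which case it equals $\sgn(q,t)$. Hence at most one index $s$ contributes, namely $s=p^{-1}r$, and it contributes a nonzero value only when in addition $p^{-1}r=qt$, that is, only when $r=(pq)t$.

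So for $r=(pq)t$ one gets $(L_pL_q)_{rt}=\sgn(p,p^{-1}r)\,\sgn(q,t)=\sgn(p,qt)\,\sgn(q,t)$, using $p^{-1}r=qt$, while for every other $r$ the entry is $0$. On the other hand $\bigl(\sgn(p,q)L_{pq}\bigr)_{rt}$ equals $\sgn(p,q)\,\sgn(pq,t)$ when $r=(pq)t$ and $0$ otherwise. Thus the two matrices have the same zero pattern, and the claim reduces to the single scalar identity $\sgn(p,qt)\,\sgn(q,t)=\sgn(p,q)\,\sgn(pq,t)$, which is precisely the associativity condition of Definition~\ref{D:associative} with $r$ renamed $t$, hence holds by hypothesis.

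The only point needing care is the index bookkeeping that isolates the single surviving term of the matrix product; there is no analytic content, and once the indices are aligned the statement is just a transcription of the associativity of $\sgn$. As a conceptual check one may note that $L_p$ is exactly the matrix, in the basis $\mathcal{B}$, of the left-multiplication map $x\mapsto i_px$ on $V=[G,\sgn,\mathfrak{F}]$: if $x=\Sum{q}x_qi_q$ then the coefficient of $i_r$ in $i_px$ is $\sgn(p,p^{-1}r)x_{p^{-1}r}$, which is the $r$-th row of $L_px$. Composition of these maps corresponds to matrix multiplication, and $i_p(i_qx)=(i_pi_q)x=\sgn(p,q)\,i_{pq}x$ by Theorem~\ref{T:associative}, which gives $L_pL_q=\sgn(p,q)L_{pq}$ again; I would nonetheless present the direct entrywise computation above, as it is self-contained and makes the appeal to associativity explicit.
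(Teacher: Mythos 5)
Your proof is correct and follows essentially the same route as the paper's: compute the $(r,t)$-entry of $L_pL_q$, observe that at most one summand survives (when $r=pqt$), and reduce the claim to the associativity identity for $\sgn$. The extra observation that $L_p$ represents left multiplication by $i_p$ is a nice conceptual check, but the entrywise computation you lead with is the same argument the paper gives.
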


\begin{proof}
 Let $L_p=A=\left(a_{rk}\right)$ and $L_q=B=\left(b_{ks}\right).$
 
 then $a_{rk}=\begin{cases}
                \sgn(p,k) & \text{\ if\ } r=pk\\
		0         & \text{\ otherwise}
              \end{cases}$
	      
 and $b_{ks}=\begin{cases}
                \sgn(q,s) & \text{\ if\ } k=qs\\
		0         & \text{\ otherwise}
              \end{cases}.$
	      
Thus 
\begin{align*}
         L_pL_q = AB & = \left( a_{rk}\right)\left( b_{ks}\right)\\
	             & = \begin{cases}
                          \sgn(p,k)\sgn(q,s) & \text{\ if\ } r=pk \text{\ and\ } k=qs\\
		          0         & \text{\ otherwise}
                        \end{cases}\\
	             & = \begin{cases}
                          \sgn(p,qs)\sgn(q,s) & \text{\ if\ } r=pqs\\
		          0         & \text{\ otherwise}
                        \end{cases}\\
	             & = \begin{cases}
                          \sgn(p,q)\sgn(pq,s) & \text{\ if\ } r=pqs\\
		          0         & \text{\ otherwise}
                        \end{cases}\\
	             & = \sgn(p,q)L_{pq}
\end{align*}
	      
\end{proof}
\begin{corollary}
 If $\sgn$ is associative on $G,$ and $U$ is the set of all linear combinations of $\left\{~L_p~|~p\in G\right\}$ over $\mathfrak{F}$, then $U$ is isomorphic to $[G,\sgn,\mathfrak{F}].$
\end{corollary}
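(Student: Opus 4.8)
The plan is to write down the obvious candidate isomorphism and check the three things it has to be: linear, bijective, and multiplicative. Define $\Phi\colon[G,\sgn,\mathfrak{F}]\to U$ on the basis by $\Phi(i_p)=L_p$ and extend $\mathfrak{F}$-linearly, so that $\Phi\bigl(\Sum{p}x_pi_p\bigr)=\Sum{p}x_pL_p$. First I would observe that $U$ really is an $\mathfrak{F}$-algebra: it is by construction a subspace of the matrix algebra indexed by $G$, and it is closed under multiplication because the preceding theorem gives $L_pL_q=\sgn(p,q)L_{pq}\in U$, so products of arbitrary linear combinations of the $L_p$ again lie in $U$ by bilinearity.

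Next I would prove that $\{L_p\mid p\in G\}$ is linearly independent, which is the one point that needs a small argument. The matrix $L_p$ has a nonzero entry (namely $\sgn(p,s)=\pm1$) in position $(r,s)$ exactly when $r=ps$; thus its set of nonzero positions is $\{(ps,s)\mid s\in G\}$. If $p\neq p'$ then $ps\neq p's$ for every $s$, so $L_p$ and $L_{p'}$ have disjoint supports. Hence in a relation $\Sum{p}c_pL_p=0$ the $(ps,s)$ entry reads $c_p\,\sgn(p,s)=0$, forcing $c_p=0$ for every $p$. Therefore $\{L_p\}$ is a basis of $U$, and $\Phi$ carries the basis $\{i_p\}$ bijectively onto the basis $\{L_p\}$, so $\Phi$ is a linear isomorphism (in particular $\dim_{\mathfrak{F}}U=|G|=n$).

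Finally I would check multiplicativity. On basis elements, $\Phi(i_pi_q)=\Phi\bigl(\sgn(p,q)i_{pq}\bigr)=\sgn(p,q)L_{pq}=L_pL_q=\Phi(i_p)\Phi(i_q)$ by the theorem. Since multiplication in both $[G,\sgn,\mathfrak{F}]$ and $U$ is $\mathfrak{F}$-bilinear and $\Phi$ is linear, this identity on a basis propagates to $\Phi(xy)=\Phi(x)\Phi(y)$ for all $x,y$. Thus $\Phi$ is an algebra isomorphism, and it automatically sends $i_e$ to $L_e$, so when $\sgn$ is moreover identive the unit is preserved as well. I do not expect a genuine obstacle here: the only step with any content is the linear independence of the $L_p$ (equivalently, the injectivity of $\Phi$), and that follows immediately from the disjoint-support observation; everything else is formal bookkeeping on top of the preceding theorem.
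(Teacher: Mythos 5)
Your proof is correct, and the paper offers no proof of this corollary at all (it is stated as following from the preceding theorem), so you are supplying exactly the argument the paper leaves implicit. The one point with actual content, linear independence of $\{L_p\}$, you handle correctly: the support of $L_p$ is $\{(ps,s)\mid s\in G\}$, and right cancellation in $G$ makes these supports pairwise disjoint, so the $L_p$ are independent and $\Phi$ is a vector-space isomorphism; multiplicativity on the basis is the preceding theorem verbatim, and bilinearity propagates it to all of $U$. The remark about the unit being preserved when $\sgn$ is identive is a correct and sensible addition, since ``isomorphic'' here is being used in the sense of not-necessarily-unital $\mathfrak{F}$-algebras and the paper does not insist on the identive hypothesis in the corollary's statement.
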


\section{The Cayley-Dickson Construction}

  This and the following sections contain examples of two sign function spaces or `twisted group algebras': the Cayley-Dickson spaces and the Clifford Algebras.
   
Let $\Sp_0$ denote the real numbers. Define
$\Sp_{n+1}=\Sp_n\times\Sp_n.$ Given $(a,b)$ and $(c,d)$ in $\Sp_{n+1},$
define their product as 
\[(a,b)(c,d)=(ac-db^*,a^*d+cb)\]
 and their
conjugate as 
\[\conj{(a,b)}=(\conj{a},-b)\]
This is the Cayley-Dickson construction. It defines an infinite sequence
of algebras $\Sp_0,\Sp_1,\Sp_2,\cdots,$ with$\Sp_0=\mathbb{R}$ being the reals,  $\Sp_1=\mathbb{C}$ being the complex numbers, $\Sp_2=\mathbb{H}$ the quaternions, $\Sp_3=\mathbb{O}$ the octonions, $\Sp_4$ the
sedenions, etc.

\section{Ordered Pairs as Shuffled Sequences}

 Let $x=x_0,x_1,x_2,\cdots\in\ell^2$ and
$y=y_0,y_2,y_3,\cdots\in\ell^2.$ Let the ordered pair $(x,y)$ denote the
``shuffled'' sequence $x_0,y_0,x_1,y_1,\cdots\in\ell^2.$
 Define the conjugate of $x$ as $x^*=x_0,-x_1,-x_2,\cdots.$ Then it
immediately follows that 
\[(x,y)^*=(x^*,-y)\]
 Equate a real number $\alpha$ with the sequence
$\alpha,0,0,0,\cdots=(\alpha,0).$
The unit basis vectors of the space $\ell^2$ are
  \begin{itemize}
   \item $i_0=1,0,0,\cdots=(1,0)=(i_0,0)=1$
   \item $i_1=0,1,0,0,\cdots=(0,1)=(0,i_0)=i$
   \item $i_2=0,0,1,0,\cdots=(i_1,0)=j$
   \item $i_3=0,0,0,1,0,\cdots=(0,i_1)=k$
   \item $i_{2n}=(i_n,0)$ for $n\ge 0$
   \item $i_{2n+1}=(0,i_n)$ for $n\ge 0$   
  \end{itemize}
Note that this scheme for numbering the basis vectors for the
Cayley-Dickson algebras produces a different numbering from those in
common use for octonions and upwards. Yet it arises naturally from the
equivalence of ordered pairs and shuffled sequences.

\section{The Sign Function and Product of Basis Vectors}

 Let $G$ denote the set of whole numbers $\{0,1,2,3,\cdots\}.$ 

 Let $G_n=\left\{p \,|\, 0\le p < 2^n\right\}.$ 

If $p\in G,$ $2p$ will denote twice the
value of $p.$ But for literals $p$ and $q,$ $pq$ will denote the
bit-wise `exclusive or' of the binary representations of $p$ and $q.$

For example, if $p=9$ and $q=11,$ the binary representations are
$p=1001$ and $q=1011.$ Applying the `exclusive or' operation to the
corresponding bits of the two numbers yields $pq=0010.$ Thus, $pq=2.$
The set $G$ is a group under this operation, with
identity element 0. Furthermore, $p^2=0$ for all 
$p\in G.$ Thus, $p=p^{-1}$ for each element $p$ in the group. $G_n$ is a subgroup of $G$ for each $n\ge 0.$ $G_n=\mathbb{Z}_2\times\mathbb{Z}_2\times\cdots\times\mathbb{Z}_2=\mathbb{Z}_2^n$ is the direct product of $n$ copies of the cyclic 2-group $\mathbb{Z}_2$, also known as the \emph{dyadic} group of order $n.$ The group operation 
satisfies the following properties:
\begin{enumerate}
 \item $(2p)(2q)=2pq$
 \item $(2p)(2q+1)=2pq+1$
 \item $(2p+1)(2q)=2pq+1$
 \item $(2p+1)(2q+1)=2pq$
\end{enumerate}
The Cayley-Dickson product of unit vectors satisfy the following
  \begin{enumerate}
   \item $i_{2p}i_{2q}=(i_p,0)(i_q,0)=(i_pi_q,0)$
   \item $i_{2p}i_{2q+1}=(i_p,0)(0,i_q)=(0,i_p^*i_q)$
   \item $i_{2p+1}i_{2q}=(0,i_p)(i_q,0)=(0,i_qi_p)$
   \item $i_{2p+1}i_{2q+1}=(0,i_p)(0,i_q)=-(i_qi_p^*,0)$
  \end{enumerate}
\begin{theorem}\label{T:sign}
There is a twist $\cyd(p,q)$ mapping $G\times G$ into
$\{-1,1\}$ such that if $p,q\in G,$ then
$i_pi_q=\cyd(p,q)i_{pq}.$
\end{theorem}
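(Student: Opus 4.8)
The plan is to prove this by induction on $n$, establishing simultaneously for each $n\ge 0$ that the Cayley-Dickson product on $\Sp_n$ (restricted to the $2^n$ basis vectors $\{i_p : p\in G_n\}$) is a twist, i.e.\ that $i_pi_q = \pm i_{pq}$ for all $p,q\in G_n$, where the sign defines the function $\cyd(p,q)$. The base case $n=0$ is trivial since $\Sp_0=\mathbb{R}$ has the single basis vector $i_0=1$ with $i_0i_0=i_0=i_{00}$, so $\cyd(0,0)=1$. For the inductive step I would assume the claim holds on $\Sp_n$, so that there is a function $\cyd:G_n\times G_n\to\{-1,1\}$ with $i_pi_q=\cyd(p,q)i_{pq}$ for $p,q\in G_n$.

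The heart of the argument is the four-case computation already laid out in the excerpt. Writing a general basis vector of $\Sp_{n+1}$ as either $i_{2p}=(i_p,0)$ or $i_{2p+1}=(0,i_p)$ for $p\in G_n$, I would combine the four product identities
\[
 i_{2p}i_{2q}=(i_pi_q,0),\quad i_{2p}i_{2q+1}=(0,i_p^*i_q),\quad i_{2p+1}i_{2q}=(0,i_qi_p),\quad i_{2p+1}i_{2q+1}=-(i_qi_p^*,0)
\]
with the induction hypothesis and the identity $i_p^* = \cyd(p,p)i_p$ (valid since $p=p^{-1}$ in $G$, using Definition~\ref{D:conjugate} and the formula $i_p^{-1}=\cyd(p,p^{-1})i_{p^{-1}}=\cyd(p,p)i_p$). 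The conjugate formula lets me rewrite $i_p^*i_q=\cyd(p,p)i_pi_q=\cyd(p,p)\cyd(p,q)i_{pq}$ and $i_qi_p^*=\cyd(p,p)\cyd(q,p)i_{pq}$. Feeding these into the four cases, and using the group identities $(2p)(2q)=2pq$, $(2p)(2q+1)=(2p+1)(2q)=2pq+1$, and $(2p+1)(2q+1)=2pq$, every product collapses to $\pm i_{(2p)(2q)}$, $\pm i_{(2p)(2q+1)}$, etc., which is exactly what is needed. Concretely this yields the recursive definition
\[
 \cyd(2p,2q)=\cyd(p,q),\quad \cyd(2p,2q+1)=\cyd(p,p)\cyd(p,q),
\]
\[
 \cyd(2p+1,2q)=\cyd(q,p),\quad \cyd(2p+1,2q+1)=-\,\cyd(p,p)\cyd(q,p).
\]
Since $\bigcup_n G_n = G$ and the restrictions are consistent ($G_n$ is a subgroup of $G_{n+1}$ and the basis vectors nest), this defines $\cyd$ on all of $G\times G$ and establishes $i_pi_q=\cyd(p,q)i_{pq}$ for all $p,q\in G$.

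The main obstacle, and the only place where care is needed, is the bookkeeping in the four-case step: one must correctly track the extra minus sign appearing in the fourth case $i_{2p+1}i_{2q+1}$, the order reversal $i_qi_p$ versus $i_pi_q$ in the odd-times-even and even-times-odd cases (the Cayley-Dickson product is not commutative beyond $\Sp_1$, so $\cyd$ need not be symmetric), and the placement of the conjugate $i_p^*$ rather than $i_q^*$ in cases two and four. Once the substitution $i_p^*=\cyd(p,p)i_p$ is made uniformly and the group-operation identities are invoked, no further difficulty arises: closure of the product among signed basis vectors and the existence of the sign are immediate, and one need not verify associativity, properness, or any field-like property here — that is deferred to later sections. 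I would present the induction compactly, displaying the four recursion formulas for $\cyd$ as the concrete content of the theorem.
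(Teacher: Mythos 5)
Your inductive scheme and the four-case decomposition by parity match the paper's argument, and your four product identities agree with those in the text. However, the way you dispatch the conjugate introduces a forward reference. The star appearing in $i_{2p}i_{2q+1}=(0,i_p^*i_q)$ and $i_{2p+1}i_{2q+1}=-(i_qi_p^*,0)$ is the Cayley--Dickson conjugate, defined structurally by $(a,b)^*=(\conj{a},-b)$. Definition~\ref{D:conjugate}, to which you appeal, defines a \emph{different} conjugate built from the twist $\cyd$ via $i_p^{-1}=\cyd(p,p)i_p$, and the paper establishes only after Theorem~\ref{T:quaternion} (where $\cyd(p,p)=-1$ for $p\ne0$ is proved) that the two conjugates agree. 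Writing $i_p^*=\cyd(p,p)i_p$ inside the proof of the present theorem therefore uses a fact whose proof, in the paper's logical order, comes later and itself depends on this theorem.

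The gap is small and easily closed. Either observe directly from $(a,b)^*=(\conj{a},-b)$ that $i_0^*=i_0$ and $i_p^*=-i_p$ for $p\ne0$ (a two-line induction: $i_{2r}^*=(\conj{i_r},0)$ and $i_{2r+1}^*=(0,-i_r)=-i_{2r+1}$) --- this is effectively what the paper does by splitting cases 2 and 4 into the subcases $r=0$ and $r\ne0$ --- or strengthen the induction hypothesis to carry ``$\cyd(p,p)=-1$ for $0\ne p\in G_n$ and $\cyd(0,0)=1$,'' which your own recursion propagates since $\cyd(2p,2p)=\cyd(p,p)$ and $\cyd(2p+1,2p+1)=-\cyd(p,p)^2=-1$. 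With either patch your four recursion formulas are correct and, upon substituting the values of $\cyd(p,p)$, reproduce exactly the paper's Corollary~\ref{C:sign} including its separate $r=0$ and $r\ne0$ clauses.
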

\begin{proof}
 
Assume $0\le p < 2^n$ and $0\le q < 2^n$ and proceed by induction on
$n.$

If $n=0,$ then $p=q=0$ and $i_pi_q=i_0i_0=i_0=\cyd(p,q)i_{pq}$ provided
$\cyd(0,0)=1.$

Suppose the principle is true for $n=k.$ Let $n=k+1.$ Let $0\le p < 2^n$
and $0\le q < 2^n.$ Then there are numbers $r$ and $s$ such that $0\le r
< 2^k$ and $0\le s < 2^k$ and such that one of the following is true:
\begin{itemize}
 \item $p=2r,$ $q=2s$
 \item $p=2r,$ $q=2s+1$
 \item $p=2r+1,$ $q=2s$
 \item $p=2r+1,$ $q=2s+1$
\end{itemize}
\begin{enumerate}
  \item  Assume $p=2r,$ $q=2s.$ Then
                \begin{align*} i_pi_q    & = i_{2r}i_{2s}        = (i_ri_s,0)\\
                                         & = (\cyd(r,s)i_{rs},0) =\cyd(r,s)(i_{rs},0)\\
                                         & = \cyd(r,s)i_{2rs}    = \cyd(2r,2s)i_{(2r)(2s)}\\
                                         & = \cyd(p,q)i_{pq}
                  \end{align*}           
                provided $\cyd(2r,2s)=\cyd(r,s).$

  \item Assume $p=2r,$ $q=2s+1.$ Then 
        $i_pi_q  =i_{2r}i_{2s+1}=(0,i_r^*i_s).$
                 
                 If $r\ne 0,$ then           
                   \begin{align*} i_pi_q & =-(0,i_ri_s)=-(0,\cyd(r,s)i_{rs})\\
                                         & =-\cyd(r,s)i_{2rs+1}=\cyd(2r,2s+1)i_{(2r)(2s+1)}\\
                                         & = \cyd(p,q)i_{pq}
                    \end{align*}
               
                provided $\cyd(2r,2s+1)=-\cyd(r,s)$ when $r\ne 0.$
               
               If $r=0,$ then \begin{align*}i_pi_q &=i_0i_{2s+1}=(0,i_0i_s)\\
                                                   &=(0,\cyd(0,s)i_s)=\cyd(0,s)i_{2s+1}\\
                                                   &=\cyd(0,2s+1)i_{pq}=\cyd(p,q)i_{pq}
               \end{align*} provided $\cyd(0,2s+1)=\cyd(0,s).$
  \item  Assume $p=2r+1,$ $q=2s.$ Then
                  \begin{align*}
                     i_pi_q &= i_{2r+1}i_{2s} = (0,i_si_r)\\
                            &= \cyd(s,r)(0,i_{sr}) = \cyd(s,r)i_{2sr+1}\\
                            &= \cyd(2r+1,2s)i_{(2r+1)(2s)}=\cyd(p,q)i_{pq}
                   \end{align*} provided $\cyd(2r+1,2s)=\cyd(s,r).$
  \item   Assume $p=2r+1,$ $q=2s+1.$ Then
          $i_pi_q=i_{2r+1}i_{2s+1}=-(i_si_r^*,0).$ 
                  If $r\ne 0,$ then
                 \begin{align*}
                   i_pi_q &= (i_si_r,0)=\cyd(s,r)(i_{sr},0)\\
                          &= \cyd(s,r)i_{2sr}=\cyd(2r+1,2s+1)i_{(2r+1)(2s+1)}\\
                          &= \cyd(p,q)i_{pq}
                 \end{align*} provided $\cyd(2r+1,2s+1)=\cyd(s,r)$ when $r\ne 0.$\\
                 If $r=0,$ then \begin{align*}
                                 i_pi_q &= i_1i_{2s+1}=-(i_si_0^*,0)\\
                                        &= -(i_si_0,0) = -\cyd(s,0)(i_s,0)\\
                                        &= -\cyd(s,0)i_{2s}=\cyd(1,2s+1)i_{1(2s+1)}\\
                                        &= \cyd(p,q)i_{pq}
                                \end{align*} provided $\cyd(1,2s+1)=-\cyd(s,0).$  
  
\end{enumerate}
Thus, the principle is true for $n=k+1$ provided the twist is
defined as required in these four cases.
\end{proof}

\begin{corollary}\label{C:sign} The requisite properties of the sign
function are
 \begin{enumerate}
  \item $\cyd(0,0)=1$     
  \item $\cyd(2r,2s)=\cyd(r,s)$     
  \item If $r\ne 0$ then $\cyd(2r,2s+1)=-\cyd(r,s)$       
  \item $\cyd(0,2s+1)=\cyd(0,s)=1$     
  \item $\cyd(2r+1,2s)=\cyd(s,r)$    
  \item If $r\ne 0$ then $\cyd(2r+1,2s+1)=\cyd(s,r)$    
  \item $\cyd(1,2s+1)=-\cyd(s,0)=-1$    
 \end{enumerate}
\end{corollary}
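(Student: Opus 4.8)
\section*{Proof proposal for Corollary~\ref{C:sign}}

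The plan is to read the seven identities straight off the inductive proof of Theorem~\ref{T:sign} and then add the small amount of bookkeeping needed to see that they are not merely forced, but also sufficient and mutually consistent.

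First I would walk through that proof case by case and match each ``provided'' clause to an item in the list: the base case $n=0$ forces (1); the all-even case of the inductive step forces (2); the even/odd case forces (3) when $r\neq0$ and the first equality of (4) when $r=0$; the odd/even case forces (5); and the odd/odd case forces (6) when $r\neq0$ and the first equality of (7) when $r=0$. Conversely, feeding \emph{any} function $\cyd$ from $G\times G$ to $\{-1,1\}$ satisfying these identities back into the same computations reproduces $i_pi_q=\cyd(p,q)i_{pq}$, so the relations genuinely characterize the twist. For this converse one also wants that the relations actually \emph{determine} $\cyd$: every pair $(p,q)\neq(0,0)$ matches exactly one rule (the rule is selected by the parities of $p$ and $q$, with the extra split on whether $\lfloor p/2\rfloor=0$ in the two mixed-parity families), so there is no clash; and each rule rewrites $\cyd(p,q)$ as $\pm\cyd(r,s)$ or $\pm\cyd(s,r)$ with $r=\lfloor p/2\rfloor$, $s=\lfloor q/2\rfloor$. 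Since $\max(r,s)<\max(p,q)$ whenever $\max(p,q)\ge1$, the recursion is well-founded and bottoms out at $\cyd(0,0)=1$, so it defines $\cyd$ on all of $G\times G$.

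It then remains to note that the trailing ``$=1$'' in (4) and the ``$=-1$'' in (7) are consequences, not extra hypotheses. By (2) and the first equality of (4) one has $\cyd(0,2s)=\cyd(0,2s+1)=\cyd(0,s)$, so induction on $s$ from $\cyd(0,0)=1$ gives $\cyd(0,s)=1$ for all $s$, upgrading (4); likewise $\cyd(2r,0)=\cyd(r,0)$ by (2) and $\cyd(2r+1,0)=\cyd(0,r)=1$ by (5), so $\cyd(s,0)=1$ for all $s$, upgrading (7). I expect the only genuine care needed is exactly this consistency check --- confirming that the two edge-case rules do not overlap the generic rules or each other and that the recursion terminates cleanly at $(0,0)$ rather than producing a contradiction; everything else is transcription from the proof already in hand.
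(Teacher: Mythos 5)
Your proposal is correct and matches the paper's implicit intent: Corollary~\ref{C:sign} is simply a catalogue of the ``provided'' clauses appearing in the inductive proof of Theorem~\ref{T:sign}, so reading them off case by case is the right move. Where you go beyond the paper --- and usefully so --- is in verifying that the seven rules partition $G\times G$ with no overlap, that the recursion is well-founded because each rule strictly decreases $\max(p,q)$ down to $(0,0)$, and in deriving the trailing equalities $\cyd(0,s)=1$ and $\cyd(s,0)=1$ by a secondary induction on $s$ using items (2), (4) and (5). The paper states those constant values in items (4) and (7) without argument, whereas the ``provided'' clauses in Theorem~\ref{T:sign} only give $\cyd(0,2s+1)=\cyd(0,s)$ and $\cyd(1,2s+1)=-\cyd(s,0)$; your short induction closes that gap and is a genuine improvement over merely transcribing the clauses.
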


Let us apply Theorem \ref{T:sign} and Corollary \ref{C:sign} to the
example of finding the Cayley-Dickson product of the basis vectors
$i_{9}$ and $i_{11}.$ The process is easier if 9 and 11 are written in their
binary representations 1001 and 1011. Their product under the `bit-wise
exclusive or' group operation is 0010, or 2. Thus,
$i_9i_7=\cyd(1001,1011)i_{0010}.$ The twist can be worked out
using Corollary \ref{C:sign} as follows:
\begin{align*}
  \cyd(1001,1011) &= \cyd(101,100) &&\text{(by Corollary\ref{C:sign}.6)}\\
                  &= \cyd(10,10)   &&\text{(by Corollary\ref{C:sign}.5)}\\
                  &= \cyd(1,1)      &&\text{(by Corollary \ref{C:sign}.1)}\\
                  &= -1             &&\text{(by Corollary \ref{C:sign}.7)}\\
\end{align*}
Thus, $i_9i_{11}=-i_{2}.$

The following establishes the ``quaternion properties'' of the twist
\begin{theorem}\label{T:quaternion} If $0\ne p\ne q\ne 0$ then
 \begin{enumerate}
  \item $\cyd(p,p)=-1$
  \item $\cyd(p,q)=-\cyd(q,p)$
  \item $\cyd(p,q)=\cyd(q,pq)=\cyd(pq,p)$
 \end{enumerate}
\end{theorem}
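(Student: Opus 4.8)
The plan is to prove the three identities by induction on $n$, with $p$ and $q$ taken in $G_n$, rewriting $\cyd$ on even and odd arguments by means of the seven recursion relations of Corollary~\ref{C:sign}; the parts are proved in the order (1), (2), (3), each allowed to quote those before it. One remark is used throughout: when $0\ne p\ne q\ne 0$ the three indices $p$, $q$, $pq$ are pairwise distinct and all nonzero, since $pq=0\iff p=q$, $pq=p\iff q=0$, and $pq=q\iff p=0$, so every half-index produced by the recursion is again a legitimate instance of the inductive hypotheses.

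Part (1) is the easiest: the base case is $\cyd(1,1)=-\cyd(0,0)=-1$ (Corollary~\ref{C:sign}.7 with $s=0$), and for $p=2r$ or $p=2r+1$ with $r\ne 0$ the recursions in Corollary~\ref{C:sign}.2 and Corollary~\ref{C:sign}.6 both give $\cyd(p,p)=\cyd(r,r)$, so the claim reduces to the inductive hypothesis (the case $p=2r$ with $r=0$ does not occur since $p\ne 0$). For part (2) I would split on the parities of $p$ and $q$. If they are opposite, say $p=2r$ (so $r\ne 0$) and $q=2s+1$, then Corollary~\ref{C:sign}.3 gives $\cyd(p,q)=-\cyd(r,s)$ and Corollary~\ref{C:sign}.5 gives $\cyd(q,p)=\cyd(r,s)$, so the two sides are negatives of each other at once; the case $p$ odd, $q$ even is symmetric. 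If the parities agree, $p\ne q$ forces the half-indices to differ, and Corollary~\ref{C:sign}.2 (when $p,q$ are even) or Corollary~\ref{C:sign}.6 (when they are odd, which needs $r,s\ne 0$) reduces the claim to $\cyd(r,s)=-\cyd(s,r)$, the inductive hypothesis; the small subcases $p=1$ or $q=1$ are read off directly from Corollary~\ref{C:sign}.4, Corollary~\ref{C:sign}.6, and Corollary~\ref{C:sign}.7.

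Part (3) is the substantial one. Again split on the parities of $p$ and $q$, and use the index identities $(2p)(2q)=2pq$, $(2p)(2q+1)=2pq+1$, $(2p+1)(2q)=2pq+1$, $(2p+1)(2q+1)=2pq$ recorded before Theorem~\ref{T:sign}, so that the parity and the ``half'' of $pq$ are determined by those of $p$ and $q$. In each of the four cases Corollary~\ref{C:sign} expresses $\cyd(p,q)$, $\cyd(q,pq)$ and $\cyd(pq,p)$ in terms of $\cyd$ evaluated on $r$, $s$ and $rs$, and the target equalities then follow by combining the inductive hypothesis for part (3) applied to $(r,s)$ or $(s,r)$ with parts (1) and (2), now available; for instance $\cyd(s,r)=\cyd(r,sr)$ together with part (2) yields $\cyd(s,rs)=-\cyd(s,r)$, which cancels exactly the sign introduced by Corollary~\ref{C:sign}.3. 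The degenerate subcases $r=0$, $s=0$, or $r=s$ (whence $rs=0$) are each verified by direct appeal to Corollary~\ref{C:sign}.

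The main obstacle is organizational: part (3) fans out into four parity cases, each with several degenerate subcases, and in every branch one must track which of Corollary~\ref{C:sign}.1--7 applies (two of them carry an ``$r\ne 0$'' side condition), whether the governing sign is supplied by the recursion itself, by anticommutativity (part 2), or by $\cyd(p,p)=-1$ (part 1), and how the arguments $r$, $s$, $rs$ get permuted. What keeps this tractable is the discipline of reducing all three of $\cyd(p,q)$, $\cyd(q,pq)$, $\cyd(pq,p)$ to $\cyd$ of the \emph{same} ordered pair of half-indices before comparing them, which is really the only idea needed beyond the induction itself.
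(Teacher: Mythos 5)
Your outline is correct and follows exactly the route the paper intends — the paper's own "proof" is the one-line assertion that the results "follow by induction from Corollary \ref{C:sign}," and your proposal is precisely that induction carried out in the right order (1), then (2), then (3), with the crucial observation that $p$, $q$, $pq$ are pairwise distinct and nonzero when $0\ne p\ne q\ne 0$, so every half-index pair fed back into the hypotheses is a legitimate instance. I spot-checked the trickier branches — in particular the mixed-parity case of part (3) with $r=s$ (so $rs=0$), and the $p=1$ subcase of part (2) — and they all close as you describe.
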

\begin{proof}
  These follow by induction from Corollary \ref{C:sign}.
\end{proof}
\begin{theorem}\label{T:proper}
 If $p,q\in G,$ then
 \begin{enumerate}
  \item $\cyd(p,q)\cyd(q,q)=\cyd(pq,q)$
  \item $\cyd(p,p)\cyd(p,q)=\cyd(p,pq)$
 \end{enumerate}
 Thus $\cyd$ is a proper twist.
\end{theorem}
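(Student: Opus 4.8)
The plan is to reduce the two displayed identities to Definition~\ref{D:proper} and then verify them by a short case analysis, leaning on the ``quaternion properties'' already recorded in Theorem~\ref{T:quaternion}. Since every element of $G$ satisfies $p=p^{-1}$, the two clauses defining a proper twist read $\cyd(p,q)\cyd(q,q)=\cyd(pq,q)$ and $\cyd(p,p)\cyd(p,q)=\cyd(p,pq)$; so once the two identities are proved, ``$\cyd$ is proper'' is immediate, and the only content is the two equalities.

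Before the main argument I would record the boundary identities $\cyd(0,q)=1$ and $\cyd(p,0)=1$ for all $p,q\in G$. These come out of a one-line induction from Corollary~\ref{C:sign}: $\cyd(0,0)=1$ by clause~(1); $\cyd(0,2s)=\cyd(0,s)$ and $\cyd(2r,0)=\cyd(r,0)$ by clause~(2); $\cyd(0,2s+1)=1$ by clause~(4); and $\cyd(2r+1,0)=\cyd(0,r)$ by clause~(5). With these in hand, the degenerate cases of both identities become trivial.

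For identity~(1), $\cyd(p,q)\cyd(q,q)=\cyd(pq,q)$, I would split into four cases. If $q=0$, both sides equal $\cyd(p,0)$. If $p=0$ and $q\ne 0$, both sides equal $\cyd(q,q)$ (here $pq=q$). If $p=q\ne 0$, then $pq=0$, the left side is $\cyd(p,p)^2=1$ by Theorem~\ref{T:quaternion}(1), and the right side is $\cyd(0,q)=1$. In the remaining case $0\ne p\ne q\ne 0$ one also has $0\ne pq\ne q\ne 0$; then $\cyd(q,q)=-1$, so the left side is $-\cyd(p,q)$, while Theorem~\ref{T:quaternion}(2)--(3) give $\cyd(pq,q)=-\cyd(q,pq)=-\cyd(p,q)$. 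Identity~(2), $\cyd(p,p)\cyd(p,q)=\cyd(p,pq)$, is handled the same way: the cases $p=0$, $q=0$, and $p=q\ne 0$ are routine after substituting the boundary identities and $\cyd(p,p)=-1$; and in the generic case $0\ne p\ne pq\ne 0$, so the left side is $-\cyd(p,q)$ and Theorem~\ref{T:quaternion}(2)--(3) give $\cyd(p,pq)=-\cyd(pq,p)=-\cyd(p,q)$.

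The only place demanding care is the generic case: one must check that the pairs of elements handed to Theorem~\ref{T:quaternion} really are distinct and nonzero --- this is exactly what $0\ne p\ne q\ne 0$ buys, since $pq=0\iff p=q$ and $pq=p\iff q=0$ --- and one must apply the antisymmetry relation together with the cyclic relation $\cyd(p,q)=\cyd(q,pq)=\cyd(pq,p)$ in the right order so that the two minus signs land correctly. Everything else is bookkeeping.
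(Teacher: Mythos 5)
Your proposal is correct and follows the same route the paper takes: dispose of the degenerate cases ($p=0$, $q=0$, or $p=q$) using Corollary~\ref{C:sign}, and handle the generic case $0\ne p\ne q\ne 0$ via the antisymmetry and cyclic relations of Theorem~\ref{T:quaternion}. You simply make explicit the bookkeeping (boundary identities $\cyd(0,q)=\cyd(p,0)=1$, and the check that $pq$ is distinct from $0$, $p$, $q$ where needed) that the paper leaves as ``immediate.''
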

\begin{proof}
 If either $p$ or $q$ is 0, or if $p=q,$ the results are immediate from
Corollary \ref{C:sign}. If $0\ne p\ne q\ne 0,$ the results follow from
Theorem \ref{T:quaternion}.
\end{proof}
Since $\cyd$ is proper, Theorem \ref{T:product} applies. That is, if $x$ and $y$ are in $\Sp_n,$ then 
                                   \begin{align*}
                                     xy &= \sum_r\ip{x}{i_r\conj{y}}i_r\\
                                        &= \sum_r\ip{\conj{x}i_r}{y}i_r
                                   \end{align*}
Note that, since $\cyd(p,p)=-1$ for $p\ne0,$ the Cayley-Dickson conjugate is equivalent to the conjugate defined in Definition \ref{D:conjugate}.\\

\section{Is $\ell^2$ a Cayley-Dickson Algebra?}

My original goal was to extend the Cayley-Dickson product to the space $\ell^2$ of square summable sequences. If, for elements $x,y\in\ell^2$, the product $xy$ is defined as above, then $xy$ is a number sequence since $\ip{x}{i_r\conj{y}}<\infty$ for each $r$, but $xy$ is not obviously square-summable. 

Since the components of the product $xy$ are $\ip{x}{i_r\conj{y}}$, if $i_r\conj{y}$ naturally formed an orthogonal sequence, with $r$ ranging over the non-negative integers, then $xy$ would be square summable. Unfortunately, $i_r\conj{y}$ forms an orthogonal sequence only for $\Sp_1=\mathbb{C}$, $\Sp_2=\mathbb{H}$ and $\Sp_3=\mathbb{O}$. 

\begin{theorem}
 If $n<4,$ $p,q\in G_n,$ $x\in\Sp_n$ and $0\ne p\ne q\ne 0,$  then \[ \ip{i_px}{i_qx}=0\]
\end{theorem}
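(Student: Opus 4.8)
The plan is to reduce the orthogonality statement $\ip{i_px}{i_qx}=0$ to a statement about the twist $\cyd$ and then verify that statement using the ``quaternion properties'' of Theorem~\ref{T:quaternion}. First I would expand both sides in components. Writing $i_px=\sum_r\cyd(p,r)x_r i_{pr}=\sum_s\cyd(p,ps)x_{ps}i_s$ after the substitution $r=ps$ (using $p=p^{-1}$), and similarly $i_qx=\sum_s\cyd(q,qs)x_{qs}i_s$, the inner product becomes
\begin{equation*}
\ip{i_px}{i_qx}=\sum_s\cyd(p,ps)\cyd(q,qs)\,x_{ps}x_{qs}.
\end{equation*}
Now reindex the sum by the pair it pairs: each term involves $x_{ps}x_{qs}$, and as $s$ ranges over $G_n$ the unordered pair $\{ps,qs\}$ is hit exactly twice, once by $s$ and once by $s'=(pq)s$ (note $ps'=qs$ and $qs'=ps$ since $p^2=q^2=e$). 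So the plan is to pair up the term for $s$ with the term for $(pq)s$ and show the two cancel; equivalently, show
\begin{equation*}
\cyd(p,ps)\cyd(q,qs)=-\cyd(p,p(pq)s)\cyd(q,q(pq)s)=-\cyd(p,qs)\cyd(q,ps).
\end{equation*}

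The key identity to establish is therefore $\cyd(p,ps)\cyd(q,qs)=-\cyd(p,qs)\cyd(q,ps)$ for all $s\in G_n$ whenever $0\ne p\ne q\ne 0$ and $n<4$. I would prove this by repeatedly applying the proper-twist relations of Theorem~\ref{T:proper} together with Theorem~\ref{T:quaternion}. Using $\cyd(p,p)\cyd(p,w)=\cyd(p,pw)$ (Theorem~\ref{T:proper}.2) and $\cyd(p,p)=-1$ gives $\cyd(p,ps)=-\cyd(p,s)$ and likewise $\cyd(p,qs)=-\cyd(p,pqs)$, and similarly for $q$; after these substitutions the desired cancellation should reduce to an identity in $\cyd(p,s),\cyd(q,s),\cyd(p,qs),\cyd(q,ps)$ and the anticommutativity $\cyd(p,q)=-\cyd(q,p)$. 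The cleanest route is probably to treat separately the degenerate subcases ($s=0$; $s=p$; $s=q$; $s=pq$), where the identity follows directly from Corollary~\ref{C:sign} and Theorem~\ref{T:quaternion}, and the generic case where $0,p,q,pq,s,ps,qs,\dots$ are all distinct.

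The main obstacle, and the place where the hypothesis $n<4$ is essential, is precisely that generic case: in $\Sp_n$ with $n\ge 4$ the subgroup generated by $p$ and $q$ together with a generic $s$ can fail to sit inside an associative (octonion-like) subalgebra, and the cancellation identity breaks down — indeed the theorem as stated is false for $n\ge4$, matching the remark in the preceding section that $i_r\conj y$ is orthogonal only up through $\mathbb{O}$. So I expect the real content to be an induction, via Corollary~\ref{C:sign}, showing that when $p,q\in G_n$ with $n\le 3$ the relevant products of $\cyd$-values collapse to the quaternionic pattern; one could also invoke the known fact that $\Sp_3=\mathbb{O}$ is alternative and that any two elements lie in a quaternion subalgebra, translating ``$i_p,i_q,x$ generate an associative subalgebra'' into the needed sign identity. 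I would present the hands-on induction on $n$ using Corollary~\ref{C:sign}, since that keeps the proof self-contained within the framework already built up in the paper.
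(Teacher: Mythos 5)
The paper states this theorem without proof, so there is nothing to compare your argument against; I can only assess the proposal on its own terms. Your reduction is correct and a good start: writing $\ip{i_px}{i_qx}=\sum_s\cyd(p,ps)\cyd(q,qs)x_{ps}x_{qs}$ and pairing $s$ with $s'=(pq)s$ (a fixed-point-free involution since $p\ne q$) correctly reduces the theorem to the sign identity $\cyd(p,ps)\cyd(q,qs)=-\cyd(p,qs)\cyd(q,ps)$ for all $p,q,s\in G_n$, $n\le 3$, $0\ne p\ne q\ne 0$. This is a sound and useful reformulation, and your check of the degenerate cases $s\in\{0,p,q,pq\}$ via Theorems~\ref{T:proper} and~\ref{T:quaternion} is fine.

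The generic case, however, is where the proposal stalls, and the routes you suggest for it do not close the gap. First, Theorems~\ref{T:proper} and~\ref{T:quaternion} hold for \emph{all} $n$, while the sign identity fails for $n\ge 4$ (e.g.\ $p=2$, $q=5$, $s=8$ gives $\cyd(2,10)\cyd(5,13)=-1$ but $-\cyd(2,13)\cyd(5,10)=+1$); so no amount of manipulation using only those two theorems can establish it, and you half-acknowledge this without resolving it. Second, your appeal to alternativity is misapplied: Artin's theorem says any \emph{two} octonions generate an associative subalgebra, but the statement you need involves \emph{three} independent data $i_p$, $i_q$, and a generic $s$ (equivalently $x$), which together can generate all of $\mathbb{O}$, so ``$i_p,i_q,x$ generate an associative subalgebra'' is simply false in the generic case. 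The cleanest honest route is to use the composition-algebra property $\norm{ab}=\norm{a}\norm{b}$ of $\mathbb{C},\mathbb{H},\mathbb{O}$: linearizing in $a$ gives $\ip{i_px}{i_qx}+\ip{i_qx}{i_px}=2\ip{i_p}{i_q}\norm{x}^2=0$, which is exactly the statement. That fact is external to the framework the paper builds up, so if you want to stay internal you must actually carry out the finite verification of the sign identity on $G_3$ (or a genuine induction via Corollary~\ref{C:sign}), neither of which the proposal does.
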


 In $\Sp_4$ the sedendions, however, $\ip{i_2x}{i_5x}\ne0$ for general values of $x$.

It is not difficult to show that $x^2$ is square summable if $x$ is square summable.

\begin{definition}\label{D:norm}
 If $x\in\ell^2$ define the norm $\norm{x}=\sqrt{\ip{x}{x}}.$
\end{definition}

\begin{theorem}\label{T:zero2}
 If $r\ne0,$ then $\ip{x}{i_rx}=\ip{x}{xi_r}=0$
\end{theorem}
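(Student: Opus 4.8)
The plan is to reduce the claim $\ip{x}{i_rx}=0$ (for $r\ne 0$) to the earlier machinery on proper twists. Since $G$ under bitwise XOR satisfies $p=p^{-1}$ for every $p$, and since $\cyd$ has been shown to be proper in Theorem~\ref{T:proper}, Theorem~\ref{T:zero} applies and gives $\left[1-\cyd(r,r)\right]\ip{x}{i_rx}=0$. The key additional input is Theorem~\ref{T:quaternion}(1), which states $\cyd(r,r)=-1$ whenever $r\ne 0$. Then $1-\cyd(r,r)=2\ne 0$, so we may cancel it (working over $\mathbb{R}$, or any field of characteristic $\ne 2$) to conclude $\ip{x}{i_rx}=0$. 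This is exactly the content of the Corollary already stated immediately after Theorem~\ref{T:zero}; the present theorem is just its restatement for the Cayley--Dickson twist together with the companion identity.

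For the second equality $\ip{x}{xi_r}=0$, I would run the symmetric argument: Theorem~\ref{T:zero1} gives $\left[1-\cyd(r,r)\right]\ip{x}{xi_r}=0$ under the same hypotheses ($\cyd$ proper, $r=r^{-1}$), and again $\cyd(r,r)=-1$ for $r\ne 0$ lets us cancel the factor $2$. So both halves follow from the same two facts: $\cyd$ is proper, and $\cyd(r,r)=-1$ for all nonzero $r$.

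One subtlety worth a sentence in the writeup: the statement is phrased for $x\in\ell^2$, not merely $x\in\Sp_n$, so strictly speaking one should note that the bilinear identity being exploited is a formal manipulation of the coefficient sums (reindexing $q=rp$, splitting the sum, using propriety), and this manipulation is valid term-by-term for any square-summable $x$ because all the series involved converge absolutely — $\sum_p |x_p|\,|x_{rp}| \le \ip{x}{x} < \infty$ by Cauchy--Schwarz, the reindexing $p\mapsto rp$ being a bijection of $G$. So the proof of Theorem~\ref{T:zero} goes through verbatim in the $\ell^2$ setting, and I would simply remark this rather than reproduce the computation.

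I do not expect a genuine obstacle here; the only thing to be careful about is making the characteristic-$\ne 2$ cancellation explicit (the paper works over $\mathbb{R}$ in this section, so this is automatic) and confirming that every $r\in G$ is its own inverse so that the hypothesis $p=p^{-1}$ of Theorems~\ref{T:zero} and~\ref{T:zero1} is met. Both points have already been established in the text preceding the statement, so the proof is essentially a two-line appeal to prior results.
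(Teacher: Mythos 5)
Your proposal is correct and follows exactly the paper's own reasoning: apply Theorems~\ref{T:zero} and~\ref{T:zero1} (valid since $\cyd$ is proper and every $r\in G$ satisfies $r=r^{-1}$), then use $\cyd(r,r)=-1$ for $r\ne 0$ to cancel the nonzero factor $1-\cyd(r,r)=2$. The added remarks on absolute convergence in $\ell^2$ are a sensible clarification but do not change the argument.
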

\begin{proof}
  Since $\cyd(r,r)=-1$ for $r\ne0$ and since $\cyd$ is proper, the result follows from Theorems \ref{T:zero} and \ref{T:zero1}.
 \end{proof}
\begin{theorem}\label{T:norm}
 If $x\in\ell^2,$ then $\norm{x}^2=x\conj{x}=\conj{x}x$
\end{theorem}
\begin{proof}
 $x\,\conj{x}=\sum_r\ip{x}{i_rx}i_r=\ip{x}{x}=\norm{x}^2$ and $\conj{x}x=\sum_r\ip{x\,i_r}{x}i_r=\ip{x}{x}=\norm{x}^2$ by Theorem \ref{T:zero2}.
\end{proof}
\begin{corollary}\label{C:inverse}
 If $x\in\ell^2,$ then $x^{-1}=\frac{\conj{x}}{\norm{x}^2}.$
\end{corollary}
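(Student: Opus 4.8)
The plan is to read the result off directly from Theorem~\ref{T:norm}. That theorem already supplies the two identities $x\conj{x}=\norm{x}^2$ and $\conj{x}x=\norm{x}^2$, where $\norm{x}^2=\ip{x}{x}=\sum_p x_p^2$ is a scalar in $\mathfrak{F}=\mathbb{R}$. First I would dispose of the trivial case $x=0$, which has no inverse. For $x\neq 0$ the sum $\sum_p x_p^2$ is strictly positive because the inner product is positive definite, so $\norm{x}^2$ is a nonzero real number and we may divide by it.

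Next I would invoke the scalar-linearity property $(cx)y=x(cy)=c(xy)$ recorded immediately after the definition of the twisted product. Applying it with $c=1/\norm{x}^2$ to each of the identities from Theorem~\ref{T:norm} gives
\[
x\bigl(\tfrac{\conj{x}}{\norm{x}^2}\bigr)=\tfrac{1}{\norm{x}^2}\,(x\conj{x})=1,
\qquad
\bigl(\tfrac{\conj{x}}{\norm{x}^2}\bigr)x=\tfrac{1}{\norm{x}^2}\,(\conj{x}x)=1 .
\]
Since $\conj{x}\in\ell^2$ (conjugation only changes signs of components, by Definition~\ref{D:conjugate}) and scaling by a real number preserves square-summability, $\conj{x}/\norm{x}^2$ is a genuine element of $\ell^2$, and the display exhibits it as a two-sided multiplicative inverse of $x$; this is what is meant by writing $x^{-1}=\conj{x}/\norm{x}^2$.

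The one point that deserves a word of caution—rather than a genuine obstacle—is that the Cayley--Dickson product on $\ell^2$ need not be associative or even alternative past $\Sp_3$, so a two-sided inverse is not automatically unique; the statement should therefore be read as asserting that $\conj{x}/\norm{x}^2$ is \emph{the} canonical two-sided inverse, the formula being exactly the familiar one from $\mathbb{C}$, $\mathbb{H}$ and $\mathbb{O}$. All the real work has already been done: Theorem~\ref{T:norm} rests on the vanishing of $\ip{x}{i_rx}$ and $\ip{x}{xi_r}$ for $r\neq 0$ (Theorem~\ref{T:zero2}, hence Theorems~\ref{T:zero} and~\ref{T:zero1}) together with the product formula of Theorem~\ref{T:product}, so the corollary reduces to a single division by a positive scalar.
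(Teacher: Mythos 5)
Your proof is correct and follows the only natural route: the paper itself gives no proof for this corollary, treating it as an immediate consequence of Theorem~\ref{T:norm}, and your division-by-the-positive-scalar $\norm{x}^2$ (using the bilinearity noted after the definition of the twisted product) is exactly that argument. Your extra remarks --- excluding $x=0$, and flagging that non-associativity beyond $\Sp_3$ means ``two-sided inverse'' rather than ``unique inverse'' --- are sensible caveats the paper leaves implicit, not deviations from its approach.
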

\begin{theorem}\label{T:square}
 If $x\in\ell^2,$ then $x^2=2x_0x-\norm{x}^2.$
\end{theorem}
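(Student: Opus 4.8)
The plan is to first compute the conjugate $\conj{x}$ in closed form and then fold everything into Theorem~\ref{T:norm}. Because every $p\in G$ satisfies $p=p^{-1}$, Theorem~\ref{T:lr_inverse} gives $i_p^{-1}=\cyd(p,p)\,i_p$. Since $\cyd(0,0)=1$ (Corollary~\ref{C:sign}) and $\cyd(p,p)=-1$ for $p\ne0$ (Theorem~\ref{T:quaternion}), this reads $i_0^{-1}=1$ and $i_p^{-1}=-i_p$ for $p\ne0$. Substituting into Definition~\ref{D:conjugate} — and using that the scalar field is $\mathbb{R}$, so $\conj{x}_p=x_p$ — I would obtain
\[
\conj{x}=\Sum{p}x_p\,i_p^{-1}=x_0\,i_0-\sum_{p\ne0}x_p\,i_p=x_0-(x-x_0)=2x_0-x,
\]
i.e. $x+\conj{x}=2x_0$, where $2x_0$ denotes the element $2x_0\,i_0$. (This is just the observation, noted after Theorem~\ref{T:proper}, that the Cayley--Dickson conjugate agrees with the conjugate of Definition~\ref{D:conjugate}.)

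Next I would write $x=2x_0-\conj{x}$ and substitute it into one factor of $x^2$. The map $y\mapsto xy$ is $\mathbb{R}$-linear: in the formula $xy=\Sum{r}\ip{x}{i_r\conj{y}}i_r$ of Theorem~\ref{T:product}, the maps $y\mapsto\conj{y}$, then $z\mapsto i_r z$ (a signed reindexing of the sequence, hence linear and norm-preserving on $\ell^2$), and then $w\mapsto\ip{x}{w}$ are each $\mathbb{R}$-linear; also $x\,i_0=x$ since $i_0=1$. Therefore
\[
x^2=x(2x_0-\conj{x})=2x_0\,x-x\conj{x},
\]
and Theorem~\ref{T:norm} gives $x\conj{x}=\norm{x}^2$, so $x^2=2x_0x-\norm{x}^2$ as claimed.

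The only point needing care is that all of this takes place in $\ell^2$ rather than in a finite-dimensional $\Sp_n$, so one must know the sequences involved are genuinely square summable: $x\conj{x}=\norm{x}^2$ is a scalar multiple of $1$, and $x^2$ is square summable by the remark preceding Definition~\ref{D:norm}. Granting these two facts, the displayed equalities are equalities of honest elements of $\ell^2$, and the only linearity invoked is exactly what the Theorem~\ref{T:product} formula supplies, so no further analytic estimate is required. The main (and quite modest) obstacle is therefore simply the explicit evaluation of $\conj{x}$ above together with the verification that left multiplication by a basis vector is a bounded linear operator on $\ell^2$.
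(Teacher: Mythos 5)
Your proof is correct and takes essentially the same route as the paper's: establish $x+\conj{x}=2x_0$, multiply by $x$ and distribute to get $x^2+x\conj{x}=2x_0x$, then invoke Theorem~\ref{T:norm} for $x\conj{x}=\norm{x}^2$. The only difference is that you spell out the computation of $\conj{x}$ from Definition~\ref{D:conjugate} and the linearity of left-multiplication, which the paper leaves implicit.
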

\begin{proof}
  Since $x+\conj{x}=2x_0,$ it follows that $x^2+x\conj{x}=2x_0x,$ thus
$x^2=2x_0x-x\conj{x}=2x_0x-\norm{x}^2.$
\end{proof}
\begin{corollary}
 If $x\in\ell^2$ and if $Re(x)=0$, then $x^2=-\norm{x}^2$ where $Re(x)=\frac{1}{2}\left(x+\conj{x}\right).$
\end{corollary}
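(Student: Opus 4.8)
The plan is to read this off directly from Theorem~\ref{T:square}. That theorem already gives $x^2 = 2x_0 x - \norm{x}^2$ for every $x \in \ell^2$, so the only thing left to do is to identify the hypothesis $Re(x) = 0$ with the vanishing of the scalar component $x_0$. First I would recall that, as noted right after Theorem~\ref{T:proper}, the Cayley-Dickson conjugate coincides with the conjugate of Definition~\ref{D:conjugate} because $\cyd(p,p) = -1$ for $p \ne 0$; concretely, in the shuffled-sequence picture $\conj{x} = x_0, -x_1, -x_2, \dots$, i.e. conjugation fixes the $i_0$-component and negates every other component. Hence $x + \conj{x} = 2x_0 i_0$, which under the identification of a real $\alpha$ with $\alpha i_0 = (\alpha, 0, 0, \dots)$ is just the real number $2x_0$. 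Therefore $Re(x) = \tfrac{1}{2}(x + \conj{x}) = x_0$.

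Given that, the corollary is immediate: the condition $Re(x) = 0$ says precisely $x_0 = 0$, and substituting $x_0 = 0$ into the identity $x^2 = 2x_0 x - \norm{x}^2$ of Theorem~\ref{T:square} yields $x^2 = -\norm{x}^2$. There is no real obstacle here; the only point requiring a sentence of justification is the equality $Re(x) = x_0$, and that is forced by the explicit form of the conjugate. One could equivalently argue from Theorem~\ref{T:norm} together with $x^2 + x\conj{x} = (x + \conj{x})x = 2x_0 x$, which is the route used in the proof of Theorem~\ref{T:square} itself, but invoking Theorem~\ref{T:square} as a black box is the cleanest presentation.
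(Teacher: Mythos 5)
Your proof is correct and is the intended argument: the paper leaves the corollary without an explicit proof precisely because it is immediate from Theorem~\ref{T:square} once one notes that $x + \conj{x} = 2x_0$ under the identification of reals with scalar multiples of $i_0$, so $Re(x)=0$ is equivalent to $x_0=0$. Your brief justification of $Re(x)=x_0$ via the explicit form of the conjugate is exactly what fills the one small gap.
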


\begin{theorem}\label{T:plc}
 If $x,y\in\ell^2,$ then
\[xy+yx=2\left(y_0x+x_0y\right)+\norm{x}^2+\norm{y}^2-\norm{x+y}^2\in\ell^2\]
\end{theorem}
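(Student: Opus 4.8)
The plan is to reduce the identity to Theorem~\ref{T:square} together with the polarization identity for the norm. First I would observe that, by Theorem~\ref{T:square}, $x^2 = 2x_0 x - \norm{x}^2$ and $y^2 = 2y_0 y - \norm{y}^2$, and likewise $(x+y)^2 = 2(x+y)_0(x+y) - \norm{x+y}^2$. Since the zeroth component is linear, $(x+y)_0 = x_0 + y_0$, so
\[ (x+y)^2 = 2(x_0+y_0)(x+y) - \norm{x+y}^2. \]
Next I would expand $(x+y)^2$ directly using the distributive law, which holds in every twisted group algebra: $(x+y)^2 = x^2 + xy + yx + y^2$. Equating the two expressions for $(x+y)^2$ and substituting the formulas for $x^2$ and $y^2$ gives
\[ xy + yx = 2(x_0+y_0)(x+y) - \norm{x+y}^2 - 2x_0 x + \norm{x}^2 - 2y_0 y + \norm{y}^2. \]

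The remaining work is purely algebraic bookkeeping: I would collect the terms $2(x_0+y_0)(x+y) - 2x_0 x - 2y_0 y$ and check that they simplify to $2(y_0 x + x_0 y)$. Indeed $2(x_0+y_0)(x+y) = 2x_0 x + 2x_0 y + 2y_0 x + 2y_0 y$, and subtracting $2x_0 x$ and $2y_0 y$ leaves exactly $2x_0 y + 2y_0 x$, as claimed. This yields the stated formula
\[ xy + yx = 2(y_0 x + x_0 y) + \norm{x}^2 + \norm{y}^2 - \norm{x+y}^2. \]

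Finally, for the assertion that the right-hand side lies in $\ell^2$: the scalars $\norm{x}^2$, $\norm{y}^2$, $\norm{x+y}^2$ are finite real numbers (each norm is finite since $x,y,x+y\in\ell^2$), hence their combination is a real scalar, which is identified with the sequence $(\,\cdot\,,0,0,\dots)\in\ell^2$; and $y_0 x + x_0 y$ is a finite linear combination of elements of $\ell^2$, hence in $\ell^2$. So $xy+yx\in\ell^2$ even though, as noted in the text, $xy$ itself need not obviously be square-summable for $n\ge 4$.

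I do not expect any genuine obstacle here: the only subtlety is that Theorem~\ref{T:square} is invoked for $x$, $y$, and $x+y$ simultaneously, and that the square-summability conclusion rests on the cancellation of all the ``hard'' cross terms into a scalar plus a finite combination — the same phenomenon that makes $x^2$ square-summable. The mild point worth stating explicitly is the linearity of the zeroth-component map $z\mapsto z_0$, which is what lets $(x+y)_0 = x_0+y_0$ be used without comment.
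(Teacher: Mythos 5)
Your proof is correct and is essentially the paper's own argument: the paper likewise writes $xy+yx=(x+y)^2-x^2-y^2$ and cites Theorem~\ref{T:square} for the three squares, leaving the routine expansion of $2(x_0+y_0)(x+y)-2x_0x-2y_0y$ to the reader. You have simply filled in that bookkeeping and, usefully, made explicit both the linearity of $z\mapsto z_0$ and the reason the right-hand side lies in $\ell^2$.
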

\begin{proof}
 Since $xy+yx=(x+y)^2-x^2-y^2$ the result follows immediately from
Theorem \ref{T:square}.
\end{proof}
\begin{definition}\label{D:convolution}
 If $x,y\in\ell^2,$ define the \emph{dyadic convolution} of $x$ and $y$ as
  \[ x\ast y=\sum_r\left(\sum_p x_py_{pr}\right)i_r. \]
\end{definition}
\begin{remark}
 The convolution is simply the product which results from the \emph{trivial} twist $\iota(p,q)=1$ for all $p,q\in G.$
\end{remark}
\begin{conjecture}\label{Cj:convolution}
 If $x,y\in\ell^2,$ then $x\ast y\in\ell^2.$
\end{conjecture}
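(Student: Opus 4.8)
The plan is to reduce the convolution $x \ast y$ to a single product in the Cayley-Dickson algebra, so that the unknown summability of $x \ast y$ follows from the (presumably easier) summability of products $xy$, or else to attack it directly as a bilinear estimate on $\ell^2$. The first thing I would do is write down, for fixed $r$, the $r$-th coefficient $(x \ast y)_r = \sum_p x_p y_{pr}$ and compare it with the coefficient $\ip{x}{i_r\conj{y}} = \sum_p \cyd(p, p^{-1}r)\, x_p\, y_{p^{-1}r}$ of the true product $xy$ given by Theorem~\ref{T:product}. Since in this group $p^{-1} = p$, both sums run over the same index set; they differ only by the sign $\cyd(p, pr)$. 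So the convolution is obtained from the product by deleting all the signs, or equivalently $x \ast y = x \tilde{y}$ where $\tilde{y}$ is a sign-twisted rearrangement — but the twist $\cyd(p,pr)$ depends on $p$ \emph{and} $r$ jointly, so there is no fixed $\pm1$ reweighting of $y$ that does the job. This means the reduction to a genuine Cayley-Dickson product is not available for free, and one must work with the convolution on its own terms.

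The cleaner approach is the Hilbert-space one. I would view $B(x,y) = x \ast y$ as a bilinear map and show it extends continuously from the dense subspace of finitely-supported sequences to all of $\ell^2 \times \ell^2$, i.e. establish $\norm{x \ast y} \le C \norm{x}\,\norm{y}$. The key observation is that for each fixed $p \in G$, the map $y \mapsto (y_{pr})_{r \in G}$ is just a \emph{permutation} of the coordinates of $y$ (since $r \mapsto pr$ is a bijection of $G$), hence an isometry of $\ell^2$; call it $T_p$. Then $(x \ast y)_r = \sum_p x_p (T_p y)_r$, so $x \ast y = \sum_p x_p\, T_p y$. Now I would try to exploit near-orthogonality: for $p \ne p'$, I'd want $\ip{T_p y}{T_{p'} y}$ to be small on average, or I'd interchange sums and compute $\norm{x \ast y}^2 = \sum_{p,p'} x_p x_{p'} \ip{T_p y}{T_{p'} y} = \sum_{p,p'} x_p x_{p'} \bigl(\sum_r y_{pr} y_{p'r}\bigr)$, and with $s = pr$ this inner sum is $\sum_s y_s y_{(pp')s} = (y \ast y)_{pp'}$. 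So we get the self-referential identity $\norm{x \ast y}^2 = \sum_{p,p'} x_p x_{p'} (y\ast y)_{pp'} = \ip{x \ast x}{y \ast y}$ (using $pp' = p'p$ and $G$ abelian), which already shows $x \ast y \in \ell^2$ \emph{provided} $x \ast x$ and $y \ast y$ are in $\ell^2$ — so the conjecture reduces to the diagonal case $x = y$.

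For the diagonal case I would note $x \ast x = \sum_r \bigl(\sum_p x_p x_{pr}\bigr) i_r$ and that its $r=0$ coefficient is $\sum_p x_p^2 = \norm{x}^2$, i.e. the ``real part'' of $x \ast x$ is already controlled. The remaining coefficients $c_r = \sum_p x_p x_{pr}$ for $r \ne 0$ are the values of the autocorrelation of $x$ along the dyadic group, and the task is to show $\sum_{r} c_r^2 < \infty$. Here is where I expect the main obstacle: this is precisely a statement about the $\ell^2 \to \ell^2$ boundedness of the (dyadic) autocorrelation, which is equivalent to an $\ell^4$-type bound, and such bounds are \emph{not} automatic — indeed the analogous classical statement (that the autocorrelation of an $\ell^2$ sequence lies in $\ell^2$) is false in general for $\mathbb{Z}$, so the argument must genuinely use the structure of the dyadic group $G = \bigoplus \mathbb{Z}_2$. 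The natural tool is the Walsh--Fourier transform: $G$ is the Pontryagin dual of the Cantor group, convolution becomes pointwise multiplication of Walsh transforms, and $\widehat{x \ast x} = (\hat x)^2$ pointwise; then $\norm{x \ast x}_{\ell^2} = \norm{(\hat x)^2}_{L^2} = \norm{\hat x}_{L^4}^2$, so the conjecture is \emph{exactly} the assertion that the Walsh transform maps $\ell^2(G)$ into $L^4$ of the Cantor group — which is \textbf{false} by the usual Hausdorff--Young sharpness counterexamples. So I would expect that either the conjecture as literally stated needs $\ell^2$ replaced by a smaller space (e.g. $\ell^{4/3}$), or the intended reading restricts $x,y$ further; the honest outcome of this plan is to identify that the convolution is bounded $\ell^{4/3} \times \ell^{4/3} \to \ell^2$ (and $\ell^2 \times \ell^2 \to \ell^1$), but not $\ell^2 \times \ell^2 \to \ell^2$, so the main obstacle is not a missing estimate but the fact that the conjecture is too optimistic.
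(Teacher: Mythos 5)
The paper does not prove this statement: it is presented explicitly as a conjecture, and the Conclusion section states that the question it underlies (whether $\ell^2$ is closed under the Cayley--Dickson product) is open. So there is no proof in the paper to compare against. Your ``proposal'' is in fact an argument that the conjecture is \emph{false}, and that argument is essentially correct. Your identity $\norm{x\ast y}^2=\ip{x\ast x}{y\ast y}$ is right (clean for finitely supported $x,y$; for general $\ell^2$ it needs a density/limit argument, which is most safely done on the dual side). The decisive step is exactly the Walsh--Plancherel duality you invoke: $G=\bigoplus_{n\ge 0}\mathbb{Z}_2$ has compact dual $K=\prod_{n\ge 0}\mathbb{Z}_2$, the Walsh transform $x\mapsto\hat x$ is an isometric isomorphism $\ell^2(G)\to L^2(K)$, and dyadic convolution becomes pointwise multiplication. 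Hence $x\ast y\in\ell^2(G)$ if and only if $\hat x\,\hat y\in L^2(K)$. Since $K$ carries a non-atomic probability measure, $L^4(K)\subsetneq L^2(K)$; choosing any $f\in L^2(K)\setminus L^4(K)$ and $x\in\ell^2(G)$ with $\hat x=f$ gives $x\ast x\notin\ell^2(G)$. So Conjecture~\ref{Cj:convolution} fails, and your diagnosis that the obstacle is not a missing estimate but an over-optimistic statement is the right conclusion. (To make this a complete disproof you should exhibit a concrete $f$, e.g.\ a sum of normalized indicator ``spikes'' on shrinking dyadic cylinders with coefficients in $\ell^2$ but not $\ell^4$.)

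One small correction: at the end you assert boundedness $\ell^2\times\ell^2\to\ell^1$. That is not right. Young's inequality for the discrete group $G$ gives $\ell^p\ast\ell^q\subset\ell^r$ with $1/r=1/p+1/q-1$, so $p=q=2$ yields $r=\infty$ (in fact $c_0$, by Riemann--Lebesgue applied to $\hat x\,\hat y\in L^1(K)$), not $\ell^1$. The correct sharp positive results in the direction you want are $\ell^1\ast\ell^2\subset\ell^2$ and $\ell^{4/3}\ast\ell^{4/3}\subset\ell^2$, as your Hausdorff--Young reasoning suggests. Also worth noting: the same Plancherel argument shows that the two corollaries following Theorem~\ref{T:commutator} in the paper (that $[x,y]$ and $xy$ are square summable whenever $x\ast y$ is) are conditionals whose hypothesis can fail, so they do not by themselves settle the question of whether $\ell^2$ is closed under the Cayley--Dickson product.
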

\begin{definition}\label{D:commutator}
 If $x,y\in\ell^2,$ define the \emph{commutator} of $x$ and $y$ as
  \[ [x,y]=xy-yx.\]
\end{definition}
\begin{theorem}\label{T:commutator}
 If $x,y\in\ell^2,$ then 
 \begin{align*} 
  [x,y] &= \sum_r\left(\sum_p [\cyd(p,pr)-\cyd(pr,p)]x_py_{pr}\right)i_r\\
        &= \sum_{r>1} \left[ \sum_{0 < p \ne r}\cyd(p,r)\left(x_{pr}y_p-x_py_{pr} \right) \right]i_r
 \end{align*}
\end{theorem}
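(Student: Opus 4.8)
The plan is to read explicit coefficients off Theorem~\ref{T:product} and then collapse the difference $xy-yx$ using the ``quaternion'' identities of Theorem~\ref{T:quaternion}. Since every element of $G$ is its own inverse, step (iii) of the proof of Theorem~\ref{T:product} (with $p^{-1}=p$, so $p^{-1}r=pr$) gives
\[
 xy=\sum_r\Bigl(\sum_p\cyd(p,pr)\,x_py_{pr}\Bigr)i_r ,
\]
the inner sum converging absolutely because $\sum_p|x_p|\,|y_{pr}|\le\norm{x}\,\norm{y}$ by Cauchy--Schwarz; this absolute convergence is what will legitimize the reindexings below. Writing the same formula with the roles of $x$ and $y$ exchanged and reindexing the inner sum of $yx$ by the involution $p\mapsto pr$ of $G$ (note $(pr)r=p$) turns it into $yx=\sum_r(\sum_p\cyd(pr,p)\,x_py_{pr})i_r$. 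Subtracting yields the first displayed line of the theorem.

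Next I would discard the degenerate indices. The $r=0$ summand has coefficient $\cyd(p,p)-\cyd(p,p)=0$, so only $r\neq0$ contributes. For fixed $r\neq0$, the summands with $p=0$ and with $p=r$ vanish, since $\cyd$ is proper (Theorem~\ref{T:proper}), hence identive (Theorem~\ref{T:conjid}), so $\cyd$ takes the value $1$ whenever one argument is $0$ --- and $p=r$ forces $pr=0$. Hence the inner sum may be taken over $0<p\neq r$, and for such $p$ the elements $p$, $pr$, $r$ are nonzero and pairwise distinct, so Theorem~\ref{T:quaternion} applies to them.

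Then I would substitute those identities. Part~(2) gives $\cyd(p,pr)=-\cyd(pr,p)$, so $\cyd(p,pr)-\cyd(pr,p)=2\cyd(p,pr)$; and part~(3), applied to the pair $(p,pr)$ whose product is $r$, gives $\cyd(p,pr)=\cyd(pr,r)=\cyd(r,p)$, which by part~(2) equals $-\cyd(p,r)$. Thus the coefficient of $i_r$ is $-2\sum_{0<p\neq r}\cyd(p,r)\,x_py_{pr}$. Finally, reindexing this sum by $p\mapsto pr$ --- an involution carrying $\{p:0<p\neq r\}$ onto itself, since $pr=0\iff p=r$ and $pr=r\iff p=0$ --- and using $\cyd(pr,r)=\cyd(r,p)=-\cyd(p,r)$ shows $\sum_{0<p\neq r}\cyd(p,r)\,x_{pr}y_p=-\sum_{0<p\neq r}\cyd(p,r)\,x_py_{pr}$, so that $-2\sum_{0<p\neq r}\cyd(p,r)\,x_py_{pr}=\sum_{0<p\neq r}\cyd(p,r)\,(x_{pr}y_p-x_py_{pr})$. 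Summing over $r\neq0$ gives the second displayed line.

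The argument is entirely elementary bookkeeping, and I expect the only real pitfall to be the \emph{order} of the reductions: the identities of Theorem~\ref{T:quaternion} require $0\neq p\neq q\neq0$, so the cases $r=0$, $p=0$, $p=r$ must be removed \emph{before} those identities are used, and each substitution $p\mapsto pr$ must be checked to be an involution of $G$ preserving the relevant index set. One last small point: the theorem asserts only the formula for $[x,y]$, not that $[x,y]\in\ell^2$, so nothing beyond the Cauchy--Schwarz bound that makes each coefficient a convergent sum is needed.
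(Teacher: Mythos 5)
Your proof is correct, and it arrives at the same two formulas by essentially the same bookkeeping, though organized a bit differently. For the first equality the paper re-expands $\ip{x}{i_r\conj{y}}$ and $\ip{x}{\conj{y}i_r}$ from scratch to get $\sum_p\cyd(p,pr)x_py_{pr}$ and $\sum_p\cyd(pr,p)x_py_{pr}$ respectively; you instead read the first of these coefficients off step (iii) of the proof of Theorem~\ref{T:product}, swap $x\leftrightarrow y$, and apply the involution $p\mapsto pr$ to recognize $yx$. Same computation, slightly more economical. For the second equality the paper uses a ``doubling'' device: it writes $2[x,y]$, reindexes one copy of the inner sum by $p\mapsto pr$ to force the antisymmetric combination $x_{pr}y_p-x_py_{pr}$ to appear, and only \emph{then} simplifies the scalar factor via the properness identities $\cyd(pr,p)-\cyd(p,pr)=\cyd(p,p)[\cyd(r,p)-\cyd(p,r)]$, finally discarding degenerate indices. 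You reverse the order: discard $r=0$, $p=0$, $p=r$ first (using identiveness), collapse the coefficient to $-2\cyd(p,r)$ directly via Theorem~\ref{T:quaternion} parts (2) and (3), and then use the involution $p\mapsto pr$ at the end to symmetrize into $x_{pr}y_p-x_py_{pr}$. Both routes are valid and comparable in length; yours leans on Theorem~\ref{T:quaternion} where the paper leans on properness, but the two toolkits are equivalent here.

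One small flag worth noting: the paper's own proof (and yours) derives the second formula with the outer sum over $r>0$, not $r>1$ as printed in the statement, and one can check directly (e.g.\ $r=1$, $p=2$ gives coefficient $\cyd(2,3)-\cyd(3,2)=2\ne0$) that the $r=1$ term need not vanish. The ``$r>1$'' in the displayed statement appears to be a typographical slip for ``$r>0$''; your closing line ``summing over $r\neq0$ gives the second displayed line'' is therefore the correct reading, and matches what the paper's argument actually establishes.
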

\begin{proof}
 \[ [x,y]=xy-yx=\sum_r\left[ \ip{x}{i_r\conj{y}} - \ip{x}{\conj{y}i_r}\right]i_r \]
 \begin{align*}
  \ip{x}{i_r\conj{y}} &= \ip{\sum_px_pi_p}{i_r\sum_q\cyd(q,q)y_qi_q}\\
                      &= \ip{\sum_px_pi_p}{\sum_q\cyd(r,q)\cyd(q,q)y_qi_{rq}}\\
		      &= \ip{\sum_px_pi_p}{\sum_q\cyd(rq,q)y_qi_{rq}}\\
		      &= \ip{\sum_px_pi_p}{\sum_p\cyd(p,pr)y_{pr}i_p}\\
		      &= \sum_p\cyd(p,pr)x_py_{pr}
 \end{align*}
 
 \begin{align*}
  \ip{x}{\conj{y}i_r} &= \ip{\sum_px_pi_p}{\left(\sum_q\cyd(q,q)y_qi_q\right)i_r}\\
                      &= \ip{\sum_px_pi_p}{\sum_q\cyd(q,r)\cyd(q,q)y_qi_{qr}}\\
		      &= \ip{\sum_px_pi_p}{\sum_q\cyd(q,qr)y_qi_{rq}}\\
		      &= \ip{\sum_px_pi_p}{\sum_p\cyd(pr,p)y_{pr}i_p}\\
		      &= \sum_p\cyd(pr,p)x_py_{pr}
 \end{align*}
 Thus,
 \[  [x,y] = \sum_r\left(\sum_p [\cyd(p,pr)-\cyd(pr,p)]x_py_{pr}\right)i_r \]
Then 
\begin{align*}
  2[x,y] &= \sum_r\left\{
                  \left[
                        \sum_p
		   \left( 
		         \cyd(p,pr)-\cyd(pr,p)
                   \right)x_py_{pr}
	          \right]i_r
          +        \left[
	                \sum_q
		   \left(
		    \cyd(q,qr)-\cyd(qr,q)
	           \right)x_qy_{qr}
	          \right]i_r
	         \right\}\\
&=\sum_r\left\{
                  \left[
                        \sum_p
		   \left( 
		         \cyd(p,pr)-\cyd(pr,p)
                   \right)x_py_{pr}
	          \right]i_r
          +        \left[
	                \sum_p
		   \left(
		    \cyd(pr,p)-\cyd(p,pr)
	           \right)x_{pr}y_p
	          \right]i_r
	         \right\}\\
 &= \sum_r\left\{
           \sum_p\left[
	    \cyd(pr,p)-\cyd(p,pr)\right]
	         \left(
	    x_{pr}y_p-x_py_{pr}\right)
	   \right\}i_r\\
 &= \sum_r\left\{
           \sum_p\cyd(p,p)\left[
	    \cyd(r,p)-\cyd(p,r)\right]
	         \left(
	    x_{pr}y_p-x_py_{pr}\right)
	   \right\}i_r
 \end{align*}
 If $r=0$ or $p=0$ or $r=p$, then $\cyd(r,p)-\cyd(p,r)=0.$ If $p\ne0$ then $\cyd(p,p)=-1.$ And if $0\ne p\ne r\ne0$, then $\cyd(p,r)=-\cyd(r,p).$
 
 So,
 \[ 2[x,y]=\sum_{r>0}\left\{
           \sum_{0<p\ne r} 2\cyd(p,r)\left(
	    x_{pr}y_p-x_py_{pr}\right)
	   \right\}i_r.\]

Thus, 
\[ [x,y]=\sum_{r>0}\left\{
           \sum_{0<p\ne r} \cyd(p,r)\left(
	    x_{pr}y_p-x_py_{pr}\right)
	   \right\}i_r.\]
\end{proof}
\begin{corollary}
  $[x,y]$ is square summable if $x\ast y$ is.
\end{corollary}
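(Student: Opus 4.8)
The plan is a straightforward coordinatewise domination of the commutator by the dyadic convolution of the sequences of absolute values. Write $|x|=\sum_p|x_p|i_p$ and $|y|=\sum_p|y_p|i_p$; these lie in $\ell^2$ with $\norm{|x|}=\norm{x}$ and $\norm{|y|}=\norm{y}$, and every coefficient $\sum_p|x_p||y_{pr}|$ of $|x|\ast|y|$ is finite by Cauchy--Schwarz. The target inequality is $\norm{[x,y]}\le 2\,\norm{|x|\ast|y|}$, from which the corollary is immediate.

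First I would take the first of the two expressions in Theorem~\ref{T:commutator}, which says that the coefficient of $i_r$ in $[x,y]$ is $\sum_p\bigl[\cyd(p,pr)-\cyd(pr,p)\bigr]x_py_{pr}$. Since each of $\cyd(p,pr)$ and $\cyd(pr,p)$ is $\pm1$, the bracketed difference lies in $\{-2,0,2\}$, so by the triangle inequality the $r$-th coefficient of $[x,y]$ has absolute value at most $2\sum_p|x_p||y_{pr}|$, i.e.\ twice the $r$-th coefficient of $|x|\ast|y|$. Squaring and summing over $r$ then gives $\norm{[x,y]}^2\le 4\,\norm{|x|\ast|y|}^2$. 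Hence square-summability of $|x|\ast|y|$ forces square-summability of $[x,y]$; and since $|x|,|y|\in\ell^2$ whenever $x,y\in\ell^2$, the truth of Conjecture~\ref{Cj:convolution} would make $[x,y]\in\ell^2$ with no extra hypothesis at all.

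The argument uses nothing beyond $|\cyd|\equiv1$, the triangle inequality and Cauchy--Schwarz, so there is no real technical obstacle. The one point that deserves emphasis --- and which is really the substance of the corollary --- is that the coordinatewise majorant delivered by this estimate is the convolution of the \emph{nonnegative} sequences $|x|$ and $|y|$, not $x\ast y$ itself: no cancellation inside $x\ast y$ is exploited. Thus ``$x\ast y$ is square summable'' in the statement is best understood as the assertion that dyadic convolution maps $\ell^2\times\ell^2$ into $\ell^2$ (equivalently, that it does so for $|x|$ and $|y|$), which is exactly the content of Conjecture~\ref{Cj:convolution}.
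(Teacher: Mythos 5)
The paper states this corollary without proof, so there is no authorial argument to compare against; your argument is the natural one and it is correct. Bounding the $r$-th coefficient of $[x,y]$ in absolute value by $2\sum_p|x_p||y_{pr}|$ via $|\cyd(p,pr)-\cyd(pr,p)|\le 2$ and concluding $\norm{[x,y]}^2\le 4\norm{|x|\ast|y|}^2$ is exactly what the juxtaposition of Theorem~\ref{T:commutator} and Definition~\ref{D:convolution} invites. Your closing caveat is the one genuinely delicate point, and you handle it correctly: the estimate majorizes $[x,y]$ by $|x|\ast|y|$, not by $x\ast y$, so the corollary as literally worded (``if $x\ast y\in\ell^2$ then $[x,y]\in\ell^2$'' for the same $x,y$) is not what the triangle-inequality argument delivers, since cancellation could in principle put $x\ast y$ in $\ell^2$ while $|x|\ast|y|$ is not. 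The statement only makes sense read as ``if Conjecture~\ref{Cj:convolution} holds'' --- equivalently, if dyadic convolution maps $\ell^2\times\ell^2$ into $\ell^2$ --- and that is clearly the author's intent given the surrounding discussion and the Conclusion, where the conjecture is said to imply closure of $\ell^2$ under the Cayley--Dickson product. Under that reading your proof is complete. One small stylistic note: the second form of the commutator in Theorem~\ref{T:commutator}, $[x,y]=\sum_{r>0}\bigl[\sum_{0<p\ne r}\cyd(p,r)(x_{pr}y_p-x_py_{pr})\bigr]i_r$, gives the same bound with the constant $2$ arising from the two terms $x_{pr}y_p$ and $x_py_{pr}$ rather than from $|\cyd-\cyd|\le 2$; either route works, and yours is if anything slightly cleaner.
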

\begin{corollary}
  $xy$ is square summable if $x\ast y$ is.
\end{corollary}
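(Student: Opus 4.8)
The plan is to split the product $xy$ into its symmetric and antisymmetric parts and handle each separately. Write
\[ xy = \tfrac12\left(xy+yx\right) + \tfrac12\left(xy-yx\right) = \tfrac12\left(xy+yx\right) + \tfrac12[x,y]. \]
First I would invoke Theorem~\ref{T:plc}, which already asserts that the symmetric part $xy+yx$ equals $2(y_0x+x_0y)+\norm{x}^2+\norm{y}^2-\norm{x+y}^2$ and hence lies in $\ell^2$ unconditionally: it is a finite real-linear combination of $x$, $y$, and the constant sequence $i_0$, all of which are square summable by hypothesis. So the symmetric part contributes nothing to the difficulty.

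Next I would appeal to the preceding corollary (the one immediately before this statement), which says that $[x,y]$ is square summable whenever $x\ast y$ is. Granting the hypothesis that $x\ast y\in\ell^2$, the antisymmetric part $\tfrac12[x,y]$ is therefore in $\ell^2$ as well. Since $\ell^2$ is a vector space, the sum of the two halves is square summable, i.e. $xy\in\ell^2$, which is exactly the claim.

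The only step requiring any thought is the decomposition itself together with the observation that Theorem~\ref{T:plc} disposes of the symmetric half with no hypothesis at all — everything conditional has been pushed into the commutator, and that in turn was reduced to the dyadic convolution in Theorem~\ref{T:commutator} and its corollary. So there is no real obstacle remaining at this point; the work was done in Theorems~\ref{T:plc} and~\ref{T:commutator}, and this corollary is just the bookkeeping that combines them. (One should double-check the trivial point that halving a square-summable sequence keeps it square summable and that $\ell^2$ is closed under addition, but these are immediate.)
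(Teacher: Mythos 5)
Your decomposition of $xy$ into its symmetric part $\tfrac12(xy+yx)$, handled unconditionally by Theorem~\ref{T:plc}, and its antisymmetric part $\tfrac12[x,y]$, handled conditionally by the preceding corollary, is precisely the argument the paper leaves implicit, and it is correct. Nothing further is needed.
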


\section{Clifford Algebra}

In Clifford algebra, the same basis vectors $\mathcal{B}=\{i_p | p\in G\}$ will be used, as well as the same group $G$ of non-negative integers with group operation the bit-wise `exclusive or' operation. Only the twists will differ.

In Clifford algebra, the basis vectors are called `blades'. Each blade has a numerical `grade'.

$i_0=1$ is the unit scalar, and is a 0-blade.

$i_1,i_2,i_4,\cdots,i_{2^n}\cdot$ are 1-blades, or `vectors' in Clifford algebra parlance.

$i_3,i_5,i_6,\cdots$ are 2-blades or `bi-vectors'. The common characteristic of the subscripts is the fact that the sum of the bits of the binary representations of the subscripts is two.

$i_7,i_{11},i_{13},i_{14},\cdots$ are 3-blades or `tri-vectors', etc.

The grade of a blade equals the sum of the bits of its subscript.

As was the case with Cayley-Dickson algebras, this is not the standard notation. However, it has the advantage that the product of basis vectors satisfies $i_pi_q=\clf(p,q)i_{pq}$ for a suitably defined Clifford twist $\clf.$

In the standard notation, 1-blades or `vectors' are denoted $e_1,e_2,e_3,\cdots,$ whereas 2-blades or `bivectors' are denoted $e_{12}, e_{13}, e_{23}, \cdots$ etc.

Translating from the $e$-notation to the $i$-notation is straightforward. For example, the 3-blade $e_{134}$ translates as $i_{13}$ since the binary representation of 13 is 1101 with bits 1, 3 and 4 set. The 2-blade $e_{23}=i_6$ since the binary representation of 6 is 110, with bits 2 and 3 set.

Stated more formally, the ``$i$'' notation is related to the ``$e$'' notation in the following way: If $p_k\in\{0,1\}$ for $0\le k < n,$ and if $p=\sum_kp_k2^k$ then $i_p=\prod_ke_{(k+1)p_k}.$

There are four fundamental multiplication properties of 1-blades.
\begin{enumerate}
\item The square of 1-blades is 1. 
\item The product of 1-blades is anticommutative.
\item The product of 1-blades is associative.
\item Every $n$-blade can be factored into the product of $n$ distinct 1-blades.
\end{enumerate}

The convention is that, if $j<k,$ then $e_je_k=e_{jk},$ thus $e_ke_j=-e_{jk}.$

Any two $n$-blades may be multiplied by first factoring them into 1-blades. For example, the product of $e_{134}$ and $e_{23},$ is computed as follows:
\begin{align*} e_{134}e_{23}&=e_1e_3e_4e_2e_3\\
                            &=-e_1e_4e_3e_2e_3\\
			    &=e_1e_4e_2e_3e_3\\
			    &=e_1e_4e_2\\
			    &=-e_1e_2e_4\\
			    &=-e_{124}
\end{align*}

Since $e_{134}=i_{13}$ and $e_{23}=i_6,$ and the bit-wise `exclusive or' of 13 and 6 is 11, the same product using the `$i$' notation is
\[ i_{13}i_6=\clf(13,6)i_{11} \] so evidently, $\clf(13,6)=-1.$

As in the case of the Cayley-Dickson product, the $\clf$ function may be defined recursively.

Since the grade of a blade $i_p$ equals the sum of the bits of $p,$ it will be convenient to have a notation for the sum of the bits of a binary number. 

\begin{definition}
If $p$ is a binary number, let $\sob{p}$ denote the sum of the bits of $p.$ 
\end{definition}

The sum of the bits function can be defined recursively as follows:\begin{enumerate}
 \item $\sob{0}=0$
 \item $\sob{2p}=\sob{p}$
 \item $\sob{2p+1}=\sob{p}+1$
\end{enumerate}
\begin{lemma}
  \item $e_1i_{2p}=i_{2p+1}$
\end{lemma}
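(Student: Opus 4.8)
The plan is to unwind the definitions on both sides in the ``$i$''-notation and reduce everything to the recursion for $\clf$. The statement $e_1 i_{2p} = i_{2p+1}$ asks us to multiply the first $1$-blade $e_1$ (which in our numbering is $i_1$) by the blade $i_{2p}$, where $2p$ is an even number, so bit $0$ of $2p$ is unset. The key observation is that, in the ``$e$''-to-``$i$'' dictionary $i_p = \prod_k e_{(k+1)p_k}$, the factor $e_1$ corresponds precisely to bit $0$ being set. Since $2p$ has bit $0$ unset, the product $e_1 i_{2p}$ should simply ``turn on'' bit $0$ without introducing any sign, because $e_1$ already stands to the left of all the factors appearing in $i_{2p}$ (whose factors are $e_2, e_3, \ldots$, all with index $\ge 2$). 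Concretely, $i_{2p+1} = e_1 \cdot \prod_{k\ge 1} e_{(k+1)p_k} = e_1 i_{2p}$, using the defining relation for $i$ in terms of $e$ together with the fact that $2p = \sum_{k\ge 1} p_{k-1}2^k$ has the same higher bits as $p$ shifted up by one.

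The cleaner route, and the one I would actually carry out, is to use the yet-to-be-defined Clifford twist recursion (which presumably mirrors Corollary~\ref{C:sign}): one expects $\clf(1, 2p) = 1$ for even arguments $2p$, i.e. multiplying by $i_1$ on the left when the right-hand subscript has bit $0$ off contributes no sign. Granting that recursion clause, the computation is immediate: $e_1 i_{2p} = i_1 i_{2p} = \clf(1,2p) i_{1 \cdot 2p} = i_{(2p)+1} = i_{2p+1}$, where $1 \cdot 2p = 2p+1$ because the bit-wise exclusive-or of $1 = \cdots 0001$ and $2p = \cdots \ast 0$ just sets the low bit. So the whole proof is: (i) note $e_1 = i_1$; (ii) note $1 \cdot 2p = 2p+1$ in the group $G$; (iii) apply $i_p i_q = \clf(p,q) i_{pq}$; (iv) invoke $\clf(1, 2p) = 1$.

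The main obstacle is that, as the excerpt stands, the recursive definition of $\clf$ has only been promised (``the $\clf$ function may be defined recursively'') but not yet written down, so strictly speaking I cannot yet cite ``$\clf(1,2p)=1$''. In the final text I would either (a) pull forward the relevant clause of the $\clf$-recursion and cite it, or (b) argue directly from the $e$-notation as in the first paragraph, which needs only the four fundamental multiplication properties of $1$-blades (associativity, anticommutativity, squaring to $1$) plus the factorization $i_p = \prod_k e_{(k+1)p_k}$ — all already stated. Option (b) is self-contained: since every $1$-blade factor of $i_{2p}$ has index at least $2$, no anticommutation sign is ever needed to slide $e_1$ past them, and there is no repeated factor (bit $0$ was off in $2p$), so no squaring-to-$1$ cancellation occurs either; the product is literally the concatenated factorization, which is exactly the factorization of $i_{2p+1}$.

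I would therefore present the argument via option (b) as the primary proof, mentioning the one-line twist computation of option (a) as a remark once $\clf$'s recursion is in place. Either way the verification is entirely routine bookkeeping with binary expansions and the blade factorization; the only thing to be careful about is the off-by-one in the index map $k \mapsto k+1$ relating bit positions to $1$-blade labels, and the fact that ``bit $0$ of $2p$ is unset'' is what prevents any factor collision.
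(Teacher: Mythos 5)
Your proposal is correct, and in fact the paper does not supply any proof for this lemma (or the other three $e_1$-lemmas): they are asserted without argument and then used as the engine of Theorem~\ref{T:sign2}. So there is no ``paper's proof'' to compare against; what you give is the argument the author evidently regards as too routine to write out.

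Your option~(b) is the right one and is self-contained given what has been stated. Since bit~$0$ of $2p$ is off, the factorization $i_p=\prod_k e_{(k+1)p_k}$ shows $i_{2p}$ contains no $e_1$ factor and every factor present has index at least~$2$; the same factorization applied to $2p+1$ gives the identical list of factors with $e_1$ prepended, and since the canonical form already lists factors in increasing index order, $e_1 i_{2p}$ is literally that canonical factorization of $i_{2p+1}$. There is actually no ``sliding'' or possible squaring to worry about at all, since $e_1$ sits to the left of everything from the start; your remarks about anticommutation and cancellation are harmless but unnecessary. You are also right that option~(a) is circular in the paper's own ordering of material: the four $e_1$-lemmas precede and are used in the proof of Theorem~\ref{T:sign2}, where $\clf$ is first constructed, so one cannot invoke $\clf(1,2p)=1$ here without reversing the logical flow. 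In short: go with option~(b), and you may trim the paragraph about sign bookkeeping since the factorization match is immediate.
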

\begin{lemma}
  \item $e_1i_{2p+1}=i_{2p}$
\end{lemma}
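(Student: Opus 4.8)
The plan is to prove the two lemmas together, since the second follows from the first (or vice versa) essentially by the fact that $e_1$ is its own inverse: multiplying $e_1 i_{2p} = i_{2p+1}$ on the left by $e_1$ and using $e_1^2 = 1$ gives $i_{2p} = e_1 i_{2p+1}$. So the real content is in establishing the first identity, $e_1 i_{2p} = i_{2p+1}$, for the Clifford twist $\clf$.

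First I would recall what $e_1$ is in the $i$-notation: $e_1 = i_1$, since $1 = 2\cdot 0 + 1$ and $i_1 = \prod_k e_{(k+1)p_k}$ with only the $0$th bit set. Then $e_1 i_{2p} = i_1 i_{2p} = \clf(1, 2p)\, i_{1 \cdot 2p}$, where $1 \cdot 2p$ is the bitwise exclusive-or of $1 = \cdots 001$ with $2p = \cdots 0$, which flips only the last bit and so equals $2p+1$. Hence $e_1 i_{2p} = \clf(1,2p)\, i_{2p+1}$, and the lemma reduces to the single claim $\clf(1, 2p) = 1$ for all $p \geq 0$. I would establish this from the recursive definition of $\clf$ (the Clifford analogue of Corollary~\ref{C:sign}): in the case $p = 2r$ we have $2p = 4r$, which is "even-even" relative to the recursion, reducing $\clf(1, 4r)$ to $\clf(0, 2r)$ or a similar base term, and in the case $p = 2r+1$ we get $2p = 4r+2$, again an even argument, reducing the twist via the recursion; by induction on $p$ (or on the number of bits) one arrives at the base case $\clf(1,0) = \clf(e, \cdot) = 1$, which holds because $0$ is the group identity and, the Clifford twist being identive, $\clf(1,0) = 1$.

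The main obstacle is really bookkeeping rather than depth: the excerpt states the four fundamental properties of Clifford $1$-blades and promises that $\clf$ "may be defined recursively" in analogy with the Cayley-Dickson case, but it does not actually write down the recursion for $\clf$ before these lemmas. So the proof must either (a) work directly from the $e$-notation — writing $i_{2p}$ as a product of $1$-blades $e_{(k+1)}$ over the set bits of $p$ (all indices $\geq 2$ since the $0$th bit is clear), and then using anticommutativity and associativity to move $e_1$ past all of them, picking up sign changes that must be shown to cancel or to be absorbed correctly into the definition $e_1 i_{2p} = i_{2p+1}$ — or (b) first pin down the defining recursion for $\clf$ and then run the short induction above. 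Approach (a) is cleaner here: since $e_1$ commutes or anticommutes past each $e_{(k+1)}$ with $k \geq 1$, and the convention $e_j e_k = e_{jk}$ for $j < k$ means $e_1$ slides to the front of the product $e_1 \cdot \prod e_{(k+1)}$ with the factors already in increasing order, one sees directly that $e_1 i_{2p}$ is exactly the canonically-ordered product of $1$-blades whose index set is $\{1\} \cup \{k+1 : p_k = 1\}$, which is precisely $i_{2p+1}$ by the $i$-to-$e$ dictionary $i_p = \prod_k e_{(k+1)p_k}$. No signs appear because $e_1$ is the smallest-indexed factor and needs no transpositions. The second lemma then follows immediately by left-multiplication by $e_1$ and $e_1^2 = 1$.
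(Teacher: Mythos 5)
The paper states this lemma (and its three companions) without any proof, so there is no proof of record to compare against; your argument fills that gap, and it is correct. Importantly, you correctly identify and avoid a circularity hazard: the recursion for $\clf$ is only pinned down in the proof of Theorem~\ref{T:sign2} and the corollary that follows it, both of which rely on these lemmas, so your variant (b) would be circular if used here. Your variant (a) uses only the $i$-to-$e$ dictionary and the four stated multiplication conventions for $1$-blades, which are exactly the tools available at this point in the paper: $i_{2p}$ factors into $1$-blades $e_{k+1}$ over the set bits $k$ of $2p$, all with $k\geq 1$ and hence index $\geq 2$, so prepending $e_1$ leaves the product in canonical increasing-index order with no transpositions and no sign change, and the resulting index set is precisely that of $i_{2p+1}$. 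Deducing $e_1 i_{2p+1}=i_{2p}$ by left-multiplying $e_1 i_{2p}=i_{2p+1}$ by $e_1$ and invoking $e_1^2=1$ together with associativity is then a clean one-line step. One small imprecision in wording: at one point you describe the factors as ranging over the set bits ``of $p$''; they must be read as the set bits of $2p$ (equivalently, indices $k+2$ for set bits $k$ of $p$) so that bit $0$ is genuinely clear and the index $1$ does not already occur in $i_{2p}$ --- the absence of that factor is what makes the no-sign claim work.
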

\begin{lemma}
  \item $i_{2p}e_1=\sbf{p}i_{2p+1}$
\end{lemma}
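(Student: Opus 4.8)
The plan is to compute $i_{2p}e_1$ directly in the $e$-notation by factoring $i_{2p}$ into distinct $1$-blades, and then to move the extra factor $e_1$ past each of those blades, picking up one sign change for each blade it must commute past. Write $2p=\sum_{k\ge 1}p_{k-1}2^k$ (note $2p$ is even, so bit $0$ is unset), so by the relation $i_r=\prod_k e_{(k+1)r_k}$ stated earlier we have $i_{2p}=\prod_{k\ge 1}e_{(k+1)p_{k-1}}$, a product of $\sob{p}$ distinct $1$-blades, each of the form $e_j$ with $j\ge 2$. Then $i_{2p}e_1=\bigl(\prod_{k\ge 1}e_{(k+1)p_{k-1}}\bigr)e_1$.

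Next I would use anticommutativity of distinct $1$-blades (property 2 of the fundamental multiplication rules) to slide $e_1$ leftward to the front of the product. Since every blade appearing in $i_{2p}$ is $e_j$ with $j\ge 2\ne 1$, each transposition contributes a factor of $-1$, and there are exactly $\sob{p}$ such blades, so $i_{2p}e_1=(-1)^{\sob{p}}\,e_1\,i_{2p}=\sbf{p}\,e_1 i_{2p}$. Now apply the first lemma, $e_1 i_{2p}=i_{2p+1}$, to conclude $i_{2p}e_1=\sbf{p}\,i_{2p+1}$, which is exactly the claim.

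Alternatively — and this may be the cleaner route to typeset — one can argue recursively in parallel with the recursive definition of $\sob{\cdot}$: decompose $i_{2p}$ according to whether $p$ is even or odd using the $i_{2n},i_{2n+1}$ structure, and reduce the exponent $\sob{p}$ one bit at a time, the base case $p=0$ being $i_0 e_1=1\cdot e_1=e_1=i_1$, which matches $\sbf{0}i_1=i_1$. Either way the only real content is bookkeeping the sign.

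The main obstacle is simply making the sign count rigorous: one must be careful that every $1$-blade factor of $i_{2p}$ genuinely differs from $e_1$ (which is where evenness of $2p$ is essential — if bit $0$ were set, the factor $e_1$ would already be present and the count would differ), and that the passage of $e_1$ across a product of $m$ distinct, $e_1$-free $1$-blades really does yield $(-1)^m$ rather than something grade-dependent. This is a routine induction on $m$ using associativity and anticommutativity, but it is the step that needs to be stated explicitly rather than waved through.
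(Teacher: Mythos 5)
Your argument is correct and complete: $i_{2p}$ factors into $\sob{2p}=\sob{p}$ distinct $1$-blades, each $e_j$ with $j\ge 2$, so sliding $e_1$ from the right end to the left end costs $(-1)^{\sob{p}}$ by anticommutativity, and then the already-established identity $e_1 i_{2p}=i_{2p+1}$ finishes it. The paper states this lemma (and its three companions) with no proof at all, so there is nothing to compare against; your transposition count is exactly the argument one would expect, and your observation that evenness of $2p$ is what guarantees $e_1$ is not among the factors is the right point to flag.
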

\begin{lemma}
  \item $i_{2p+1}e_1=\sbf{p}i_{2p}$
\end{lemma}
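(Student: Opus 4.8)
The statement to prove is $i_{2p+1}e_1=\sbf{p}i_{2p}$. The plan is to deduce it directly from the three preceding lemmas together with associativity of the Clifford product and the fact that $e_1^2=1$ (the square of a $1$-blade is the unit scalar), rather than unwinding the recursive definition of $\clf$.

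First I would rewrite the left factor using the first lemma, $i_{2p+1}=e_1i_{2p}$, giving $i_{2p+1}e_1=(e_1i_{2p})e_1$. Since the Clifford product is associative — every blade factors into $1$-blades and products of $1$-blades associate — this equals $e_1(i_{2p}e_1)$. Now apply the third lemma, $i_{2p}e_1=\sbf{p}i_{2p+1}$, and pull the scalar out using $(cx)y=c(xy)$: we get $e_1(i_{2p}e_1)=\sbf{p}(e_1i_{2p+1})$. Finally the second lemma, $e_1i_{2p+1}=i_{2p}$, closes the chain, yielding $i_{2p+1}e_1=\sbf{p}i_{2p}$.

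An equivalent route avoiding the first two lemmas: from the third lemma, multiply both sides by $\sbf{p}$ (using $\sbf{p}^2=1$) to get $i_{2p+1}=\sbf{p}\,i_{2p}e_1$; then right-multiply by $e_1$ and use $e_1e_1=1$. A third, more pedestrian option is induction on $\sob{p}$ working straight from the recursion for $\clf$, which reduces the claim to showing $\clf(2p+1,1)=\sbf{p}$, with base case $p=0$ (where $i_1e_1=e_1e_1=1=\sbf{0}i_0$) and an inductive step tracking how the recursion behaves under a change of the low-order bits; this is the version I would fall back on if the preceding lemmas were unavailable.

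There is no serious obstacle here. The only points deserving a word of care are (i) invoking associativity for these specific triple products, which is legitimate because $1$-blades associate and the blades involved are products of $1$-blades, and (ii) the degenerate case $p=0$, which all three arguments handle without modification. I expect the author's proof to be essentially one of the first two short chains above, perhaps alongside the companion lemma $i_{2p}e_1=\sbf{p}i_{2p+1}$ proved by the analogous manipulation.
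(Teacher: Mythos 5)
Your proof is correct. Note, though, that the paper states all four of these lemmas without proof, so there is no authorial argument to compare against; the intended justification is evidently the direct one you sketch — every blade factors as a product of $1$-blades with strictly increasing indices, $i_{2p}$ uses only indices $\ge 2$, and $e_1$ anticommutes past each of the $\sob{p}$ such factors while $e_1^2=1$. Your first chain ($i_{2p+1}e_1=(e_1i_{2p})e_1=e_1(i_{2p}e_1)=\sbf{p}\,e_1i_{2p+1}=\sbf{p}\,i_{2p}$) is a valid deduction from the three preceding lemmas together with associativity, which is legitimate to invoke here since it is listed as a defining property of the Clifford product, not something derived from the twist. One caution on your third fallback route: reducing the claim to $\clf(2p+1,1)=\sbf{p}$ presupposes that $\clf$ is already defined and known to satisfy $i_pi_q=\clf(p,q)i_{pq}$, but in the paper's ordering Theorem \ref{T:sign2} establishing that fact comes \emph{after} and depends on these lemmas, so that route would be circular in context; your first two arguments avoid this and are the ones to keep.
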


\begin{theorem}\label{T:sign2}
There is a twist $\clf(p,q)$ mapping $G\times G$ into
$\{-1,1\}$ such that if $p,\,q\in G,$ then
$i_pi_q=\clf(p,q)i_{pq}.$
\end{theorem}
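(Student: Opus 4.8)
The plan is to mirror exactly the strategy used for Theorem~\ref{T:sign}: define $\clf$ recursively and verify by induction on $n$ (with $p,q\in G_n$) that the recursion is consistent with the Clifford product $i_pi_q$. The key structural fact is that every blade $i_{2p}$ or $i_{2p+1}$ factors through $e_1$ via the four preliminary lemmas: $e_1i_{2p}=i_{2p+1}$, $e_1i_{2p+1}=i_{2p}$, $i_{2p}e_1=\sbf{p}i_{2p+1}$, and $i_{2p+1}e_1=\sbf{p}i_{2p}$. These let me ``strip off'' the least significant bit of each subscript, reducing a product of blades in $G_{k+1}$ to a product of blades in $G_k$ (for which the inductive hypothesis supplies a $\clf$ value) together with bookkeeping factors of $e_1$ that must be commuted past one another, each commutation contributing a sign $\sbf{\,\cdot\,}$.

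First I would set $\clf(0,0)=1$ as the base case $n=0$. For the inductive step, with $p,q\in G_{k+1}$, I would split into the four parities $p=2r,q=2s$; $p=2r,q=2s+1$; $p=2r+1,q=2s$; $p=2r+1,q=2s+1$ (with $r,s\in G_k$), exactly as in Theorem~\ref{T:sign}. In each case I would rewrite $i_p$ and $i_q$ using the lemmas (e.g.\ $i_{2r+1}=e_1i_{2r}$ on the left, but $i_{2r+1}=\sbf{r}i_{2r}e_1$ when I need $e_1$ on the right so it can annihilate against an $e_1$ coming from $i_q$), use associativity of the Clifford product (property 3 of 1-blades) to regroup, use $e_1e_1=1$ to cancel the paired factors, and absorb the commutation sign $\sbf{r}$ (or $\sbf{s}$) that arises from moving $e_1$ past the grade-$\sob{r}$ blade $i_{2r}$. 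What is left is $\clf(r,s)i_{rs}$ by the inductive hypothesis, and since $(2r)(2s)=2rs$, $(2r)(2s+1)=2rs+1$, etc.\ (the XOR identities recorded earlier), this has the form $\clf(p,q)i_{pq}$ provided $\clf$ on $G_{k+1}$ is defined by the emerging relations: schematically $\clf(2r,2s)=\clf(r,s)$, $\clf(2r,2s+1)=\clf(r,s)$, $\clf(2r+1,2s)=\sbf{s}\clf(r,s)$ or $\sbf{r}\clf(r,s)$ (whichever the computation dictates), and $\clf(2r+1,2s+1)=\pm\sbf{r}\sbf{s}\clf(r,s)$ with the precise sign falling out of the $e_1e_1$ cancellation. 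I would state these as a corollary analogous to Corollary~\ref{C:sign}.

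I expect the main obstacle to be getting the signs in the two ``odd'' cases exactly right --- in particular tracking whether the commutation factor is $\sbf{r}$ or $\sbf{s}$, and whether the $p=2r+1,q=2s+1$ case picks up an extra $-1$ beyond the product of the two parity signs, since there the $e_1$'s must be brought adjacent across an intervening blade before they cancel. This is the analogue of the $r=0$ special-casing in Theorem~\ref{T:sign}, and I would resolve it by carefully applying the third and fourth lemmas (which is where the $\sbf{r}$ factors live) and double-checking against the worked example $i_{13}i_6=\clf(13,6)i_{11}=-i_{11}$, whose recursive reduction should force $\clf(13,6)=-1$. Once the recursion is pinned down, the induction itself is routine, and well-definedness of $\clf:G\times G\to\{-1,1\}$ follows because each $p,q$ lies in some $G_n$ and the recursion on $n$ is consistent across levels by construction (property 2 of the bit-sum, $\sob{2p}=\sob{p}$, guarantees the $G_n\subset G_{n+1}$ compatibility).
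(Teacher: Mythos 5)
Your strategy matches the paper's proof exactly: induction on $n$ over $G_n$, splitting into four parity cases for $(p,q)$, reducing to $G_{n-1}$ by peeling off $e_1$ factors via the four preliminary lemmas, and absorbing the commutation signs. The overall plan would, if executed as you describe, produce the correct recursion.

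However, your schematic guesses for the recursion are off in a way worth flagging, because they reveal a misreading of where the sign actually originates. In the paper's computation the factor $\sbf{u}$ appears precisely when the \emph{second} argument $q$ is odd (cases $q=2v+1$), because that is when the $e_1$ coming from $i_q$'s factorization must be commuted leftward past the even blade $i_{2u}$, picking up $\sob{u}$ sign changes. The paper's result is $\clf(2u,2v)=\clf(2u+1,2v)=\clf(u,v)$ and $\clf(2u,2v+1)=\clf(2u+1,2v+1)=\sbf{u}\clf(u,v)$. Your schematic has this backwards: you assert $\clf(2r,2s+1)=\clf(r,s)$ with no sign, place a sign in $\clf(2r+1,2s)$ where none belongs, and expect a double sign $\sbf{r}\sbf{s}$ in the fourth case, whereas only one factor $\sbf{r}$ survives (the leading $e_1$ from $i_{2r+1}$ never has to cross anything before annihilating). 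Your own plan --- writing $i_{2s+1}=e_1i_{2s}$ and moving that $e_1$ left past $i_{2r}$ --- does yield the correct $\sbf{r}$ in case 2, so the error is in the schematic, not the method; but as written the schematic would fail the sanity check $\clf(13,6)=-1$, and you should correct it before presenting the corollary.
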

\begin{proof}
Let $G_n=\{p\:|\:0\le p < 2^n\}$ with group operation ``bit-wise exclusive or'' as in the Cayley-Dickson algebras.

We begin by noticing that $i_0i_0=\clf(0,0)i_0=1$ provided $\clf(0,0)=1.$

This defines the twist for $G_0.$

If $p$ and $q$ are in $G_{n+1},$ then there are elements $u$ and $v$ in $G_n$ such that one of the following is true:
\begin{enumerate}
 \item $p=2u$ and $q=2v$
 \item $p=2u$ and $q=2v+1$
 \item $p=2u+1$ and $q=2v$
 \item $p=2u+1$ and $q=2v+1$
\end{enumerate}
Assume $\clf$ is defined for $u,v\in G_n,$ then consider these four cases in order.

\begin{enumerate}
 \item $p=2u$ and $q=2v$\\
       \begin{align*}i_pi_q &= i_{2u}i_{2v}\\
                            &= \clf(u,v)i_{2uv}\\
                            &= \clf(2u,2v)i_{(2u)(2v)}\\
			    &= \clf(p,q)i_{pq}
       \end{align*} provided $\clf(2u,2v)=\clf(u,v).$
 \item $p=2u$ and $q=2v+1$
       \begin{align*}i_pi_q & =i_{2u}i_{2v+1}\\
                            & =i_{2u}e_1i_{2v}\\
                            & =\sbf{u}e_1i_{2u}i_{2v}\\
                            & =\sbf{u}e_1\clf(2u,2v)i_{2uv}\\
                            & =\sbf{u}\clf(u,v)i_{2uv+1}\\
 			    & =\clf(2u,2v+1)i_{2uv+1}\\
			    & =\clf(p,q)i_{pq}
       \end{align*} provided $\clf(2u,2v+1)=\sbf{u}\clf(u,v).$
 \item $p=2u+1$ and $q=2v$
       \begin{align*} i_pi_q &=i_{2u+1}i_{2v}\\
                             &=e_1i_{2u}i_{2v}\\
			     &=e_1\clf(u,v)i_{2uv}\\
			     &=\clf(u,v)i_{2uv+1}\\
			     &=\clf(2u+1,v)i_{2uv+1}\\
			     &=\clf(p,q)i_{pq}      
       \end{align*} provided $\clf(2u+1,2v)=\clf(u,v).$
 \item $p=2u+1$ and $q=2v+1$
       \begin{align*} i_pi_q &=i_{2u+1}i_{2v+1}\\
                             &=e_1i_{2u}e_1i_{2v}\\
                             &=\sbf{u}e_1e_1i_{2u}i_{2v}\\
			     &=\sbf{u}\clf(u,v)i_{2uv}\\
			     &=\clf(2u+1,2v+1)i_{2uv}\\
			     &=\clf(p,q)i_{pq}
       \end{align*} provided $\clf(2u+1,2v+1)=\sbf{u}\clf(u,v).$
\end{enumerate}
\end{proof}

\begin{corollary}
 Assume $p,q\in G_n.$ The Clifford algebra twist can be defined recursively as follows:
 \begin{enumerate}
  \item $\clf(0,0)=1$
  \item $\clf(2p,2q)=\clf(2p+1,2q)=\clf(p,q)$
  \item $\clf(2p,2q+1)=\clf(2p+1,2q+1)=\sbf{p}\clf(p,q)$
 \end{enumerate}
\end{corollary}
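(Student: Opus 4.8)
The plan is simply to harvest the corollary from the inductive construction carried out in the proof of Theorem~\ref{T:sign2}: that proof already isolates, case by case, exactly the conditions under which the purported twist $\clf$ is consistent, so the three displayed relations are nothing more than a repackaging of those conditions together with the base clause $\clf(0,0)=1$.

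First I would recall the base of the induction: $i_0 i_0 = i_0$ forces $\clf(0,0)=1$, which is clause (1). Next, for $p,q\in G_{n+1}$ I would invoke the unique decomposition of each of $p,q$ as either $2u$ or $2u+1$ with $u\in G_n$, splitting into the four cases of Theorem~\ref{T:sign2}. Cases~(1) and~(3) of that proof (the even-$q$ cases) yield $\clf(2u,2v)=\clf(u,v)$ and $\clf(2u+1,2v)=\clf(u,v)$, which merge into clause~(2); cases~(2) and~(4) (the odd-$q$ cases) yield $\clf(2u,2v+1)=\sbf{u}\clf(u,v)$ and $\clf(2u+1,2v+1)=\sbf{u}\clf(u,v)$, which merge into clause~(3). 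The $\sbf{u}$ factor in these last two is precisely the sign produced when $e_1$ is commuted past $i_{2u}$, i.e.\ the content of the lemma $i_{2u}e_1=\sbf{u}i_{2u+1}$; the only computation worth double-checking is that this commutation sign, and no other, survives after the two copies of $e_1$ in case~(4) collapse via $e_1 e_1 = 1$.

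Finally I would note the converse direction needed to justify calling clauses (1)--(3) a \emph{recursive definition}: since each of $p,q$ has strictly fewer binary digits than $2p,2q,2p+1,2q+1$, the recursion is well-founded, and since the even/odd decomposition of an integer is unique, the relations assign a single well-defined value $\clf(p,q)\in\{-1,1\}$ to every pair $(p,q)\in G_n\times G_n$, agreeing by construction with the twist of Theorem~\ref{T:sign2}. I do not expect any genuine obstacle; the one place to stay attentive is the bookkeeping of the $\sbf{u}$ sign in clause~(3), making sure it is keyed to the halved index of the \emph{first} argument and not to $v$ or to $uv$.
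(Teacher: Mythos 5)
Your proposal is correct and matches the paper's intent: the corollary carries no separate proof and is simply the collection of the base clause and the four ``provided'' conditions isolated in the proof of Theorem~\ref{T:sign2}, merged pairwise exactly as you describe. Your extra remark on well-foundedness and uniqueness of the even/odd decomposition is a sound addition that the paper leaves implicit.
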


\begin{lemma}
 $\sbf{u}\sbf{v}=\sbf{uv}$
\end{lemma}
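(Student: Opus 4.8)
The plan is to prove the equivalent congruence $\sob{u}+\sob{v}\equiv\sob{uv}\pmod 2$, which is exactly the claim $\sbf{u}\sbf{v}=\sbf{uv}$ rewritten via $\sbf{p}=(-1)^{\sob{p}}$. The cleanest non-inductive route is to note that, writing $u$ and $v$ in binary with digits $u_k,v_k\in\{0,1\}$, the elementary identity $u_k+v_k=(u_k\oplus v_k)+2(u_k\wedge v_k)$ summed over $k$ gives $\sob{u}+\sob{v}=\sob{uv}+2\sob{u\wedge v}$, so the two sides agree modulo $2$. However, I will instead give an induction that mirrors the recursive structure already established in this section, since it uses only facts proved above.

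For the induction, the base case is $u=v=0$: here $\sob0=0$, so $\sbf0\sbf0=1=\sbf0$ and $uv=0$, as required. For the inductive step I split into the same four parity cases used in Theorem \ref{T:sign2} and its corollary: $(2p,2q)$, $(2p,2q+1)$, $(2p+1,2q)$, $(2p+1,2q+1)$, with $p,q$ strictly smaller than $u,v$ respectively (when $(u,v)\ne(0,0)$). In each case I apply the group-operation identities $(2p)(2q)=2pq$, $(2p)(2q+1)=(2p+1)(2q)=2pq+1$, $(2p+1)(2q+1)=2pq$ together with the recursion $\sob{2p}=\sob{p}$, $\sob{2p+1}=\sob{p}+1$ to reduce both $\sbf{u}\sbf{v}$ and $\sbf{uv}$ to expressions in $\sbf{p}$ and $\sbf{q}$, and then invoke the induction hypothesis $\sbf{p}\sbf{q}=\sbf{pq}$. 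For instance, when $u=2p$ and $v=2q+1$ we have $uv=2pq+1$, hence $\sbf{uv}=-\sbf{pq}$, while $\sbf{u}\sbf{v}=\sbf{p}\cdot(-\sbf{q})=-\sbf{p}\sbf{q}=-\sbf{pq}$; the $(2p+1,2q)$ case is symmetric, and in the $(2p+1,2q+1)$ case the two "odd" signs cancel, giving $\sbf{u}\sbf{v}=\sbf{p}\sbf{q}=\sbf{pq}=\sbf{uv}$ since $uv=2pq$.

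There is essentially no serious obstacle here: every ingredient — the recursion for $\sob{\cdot}$, the parity decomposition of the exclusive-or, and the induction hypothesis — is already available. The only point deserving a word of care is the choice of a well-founded induction parameter; since $p<u$ and $q<v$ in every non-trivial case, one may induct on $u+v$ (equivalently on $\max(u,v)$, or on the common bit-length $n$ with $u,v\in G_n$), which makes the recursion terminate and legitimizes the appeal to the inductive hypothesis.
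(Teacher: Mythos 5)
Your proposal is correct, and you in fact identified both proofs: the one-line argument via the carry identity $\sob{u}+\sob{v}=\sob{uv}+2\sob{u\wedge v}$ is exactly the paper's proof, but you then opted to develop the inductive alternative instead. The two approaches buy slightly different things. The paper's route is the shorter and more conceptual one: it observes that in each bit position $u_k+v_k=(u_k\oplus v_k)+2(u_k\wedge v_k)$, sums over $k$, and reads off the parity statement immediately, with no reference to how $\sob{\cdot}$ or the exclusive-or were recursively set up. Your inductive route is longer but deliberately parallels the recursive machinery already built in this section (the four parity cases $(2p,2q)$, $(2p,2q+1)$, $(2p+1,2q)$, $(2p+1,2q+1)$ and the recursion for $\sob{\cdot}$), so it relies on nothing beyond facts the section has already established and slots into the surrounding inductive proofs of Theorem~\ref{T:sign2} and the associativity theorem. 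One small imprecision in your write-up: it is not quite true that $p<u$ and $q<v$ in every non-trivial case (e.g.\ $u=0$, $v>0$ gives $p=u=0$); the correct statement is $p\le u$, $q\le v$ with at least one strict whenever $(u,v)\ne(0,0)$, which still makes $u+v$ a valid induction parameter, and inducting on the common bit-length $n$ with $u,v\in G_n$ (as you also suggest, and as the paper does elsewhere) avoids the issue entirely.
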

\begin{proof}
 This follows from the fact that $\sob{u} + \sob{v} = \sob{uv} +2\sob{u\,\wedge\,v}$ where $u\,\wedge\,v$ represents the bitwise `and' of $u$ and $v.$
\end{proof}
\begin{theorem}
 The Clifford twist is associative.
\end{theorem}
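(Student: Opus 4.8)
The plan is to induct on $n$, showing that the associativity identity $\clf(p,q)\clf(pq,r)=\clf(p,qr)\clf(q,r)$ holds for all $p,q,r\in G_n$; since every triple in $G$ lies in some $G_n$, this gives the theorem. The base case $n=0$ is immediate, as $p=q=r=0$ and both sides equal $\clf(0,0)^2=1$.

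For the inductive step, assume the identity on $G_n$ and take $p=2u+\epsilon$, $q=2v+\delta$, $r=2w+\zeta$ in $G_{n+1}$ with $u,v,w\in G_n$ and $\epsilon,\delta,\zeta\in\{0,1\}$. The first move is to compress the recursion corollary into the single formula
\[ \clf(2u+\epsilon,\,2v+\delta)=(-1)^{\delta\sob{u}}\clf(u,v), \]
valid because the low bit of the left argument never matters and the low bit $\delta$ of the right argument contributes the factor $\sbf{u}$ precisely when $\delta=1$. Since the group operation is bitwise exclusive or, $pq=2(uv)+(\epsilon\oplus\delta)$ and $qr=2(vw)+(\delta\oplus\zeta)$, where $\oplus$ denotes addition mod $2$. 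Applying the compressed formula four times yields
\[ \clf(p,q)\clf(pq,r)=(-1)^{\delta\sob{u}+\zeta\sob{uv}}\,\clf(u,v)\clf(uv,w) \]
and
\[ \clf(p,qr)\clf(q,r)=(-1)^{(\delta\oplus\zeta)\sob{u}+\zeta\sob{v}}\,\clf(u,vw)\clf(v,w). \]
By the inductive hypothesis $\clf(u,v)\clf(uv,w)=\clf(u,vw)\clf(v,w)$, so it remains only to check that the two sign exponents agree mod $2$. Using the previously proved lemma $\sbf{u}\sbf{v}=\sbf{uv}$, that is $\sob{uv}\equiv\sob{u}+\sob{v}\pmod 2$, the first exponent becomes $(\delta+\zeta)\sob{u}+\zeta\sob{v}$, which equals the second since $\delta\oplus\zeta\equiv\delta+\zeta\pmod 2$.

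I do not expect a genuine obstacle; the only delicate point is the sign bookkeeping, and the device of packaging the recursion's sign cases as the exponent $\delta\sob{u}$ is what prevents the eight subcases of $(\epsilon,\delta,\zeta)$ from turning the argument into a long case check. Alternatively one could verify the equivalent symmetric form of Theorem \ref{T:altAssoc}, but the computation would be of the same length.
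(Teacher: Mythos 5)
Your proof is correct and rests on the same induction, the same base case, and the same key lemma ($\sbf{u}\sbf{v}=\sbf{uv}$, i.e.\ $\sob{uv}\equiv\sob{u}+\sob{v}\pmod 2$) as the paper's argument. The difference is organizational: the paper verifies the cocycle identity on $G_{n+1}$ by running through all eight parity patterns of $(p,q,r)$ separately, while you observe that the recursion collapses to the single formula $\clf(2u+\epsilon,2v+\delta)=(-1)^{\delta\sob{u}}\clf(u,v)$ (the low bit of the left argument is irrelevant, and the low bit of the right argument contributes $\sbf{u}$), so the eight cases become one exponent computation modulo $2$. I checked the compression against the corollary and the four XOR parity rules, the resulting exponents $\delta\sob{u}+\zeta\sob{uv}$ versus $(\delta\oplus\zeta)\sob{u}+\zeta\sob{v}$, and their equality via the lemma; all of it holds. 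Your version buys a genuinely shorter and less error-prone write-up at the cost of introducing the parity variables $\epsilon,\delta,\zeta$, whereas the paper's version is more pedestrian but requires no extra notation beyond what the recursion corollary already uses.
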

\begin{proof}
 By Definition \ref{D:associative}, the twist $\clf: G \times G \mapsto \{ -1,1\}$ is associative provided $\clf(p,q)\clf(pq,r)=\clf(p,qr)\clf(q,r)$ for $p,q,r\in G.$ This is true for $G_0$ since $\clf(0,0)=1.$
 
 Suppose $\clf(u,v)\clf(uv,w)=\clf(u,vw)\clf(v,w)$ for $u,v,w\in G_n.$ Let $p,q,r$ be in $G_{n+1}.$ Then there are $u,v,w\in G_n$ such
that one of the following eight cases is true.
 \begin{enumerate}
  \item $p=2u,q=2v,r=2w$  
  \begin{align*}\text{Then\ }\clf(p,q)\clf(pq,r)&=\clf(2u,2v)\clf(2uv,2w)\\
				                &=\clf(u,v)\clf(uv,w)\\
					        &=\clf(u,vw)\clf(v,w)\\
				                &=\clf(2u,2vw)\clf(2v,2w)\\
						&=\clf(p,qr)\clf(q,r)
  \end{align*}
  \item $p=2u,\;q=2v,\;r=2w+1$
  \begin{align*}\text{Then\ }\clf(p,q)\clf(pq,r)&=\clf(2u,2v)\clf(2uv,2w+1)\\
                                                &=\clf(u,v)\sbf{uv}\clf(uv,w)\\
						&=\sbf{uv}\clf(u,vw)\clf(v,w)\\
						&=\sbf{u}\clf(u,vw)\sbf{v}\clf(v,w)\\
						&=\clf(2u,2vw+1)\clf(2v,2w+1)\\
				                &=\clf(p,qr)\clf(q,r)
  \end{align*}
  \item $p=2u,\;q=2v+1,\;r=2w$
  \begin{align*}\text{Then\ }\clf(p,q)\clf(pq,r)&=\clf(2u,2v+1)\clf(2uv+1,2w)\\
                                                &=\sbf{u}\clf(u,v)\sbf{uv}\clf(uv,w)\\
						&=\sbf{u}\sbf{u}\clf(u,vw)\sbf{v}\clf(v,w)\\
						&=\clf(2u,2vw+1)\clf(2v+1,2w)\\
				                &=\clf(p,qr)\clf(q,r)
  \end{align*}
  \item $p=2u,\;q=2v+1,\;r=2w+1$
  \begin{align*}\text{Then\ }\clf(p,q)\clf(pq,r)&=\clf(2u,2v+1)\clf(2uv+1,2w+1)\\
						&=\sbf{u}\clf(u,v)\sbf{uv}\clf(uv,w)\\
						&=\sbf{u}\sbf{u}\clf(u,vw)\sbf{v}\clf(v,w)\\
						&=\clf(2u,2vw)\clf(2v+1,2w+1)\\
				                &=\clf(p,qr)\clf(q,r)
  \end{align*}
  \item $p=2u+1,\;q=2v,\;r=2w$
  \begin{align*}\text{Then\ }\clf(p,q)\clf(pq,r)&=\clf(2u+1,2v)\clf(2uv+1,2w)\\
                                                &=\clf(u,v)\clf(uv,w)\\
						&=\clf(u,vw)\clf(v,w)\\
						&=\clf(2u+1,2vw)\clf(2v,2w)\\
				                &=\clf(p,qr)\clf(q,r)
  \end{align*}
  \item $p=2u+1,\;q=2v,\;r=2w+1$
  \begin{align*}\text{Then\ }\clf(p,q)\clf(pq,r)&=\clf(2u+1,2v)\clf(2uv+1,2w+1)\\
                                                &=\clf(u,v)\sbf{vw}\clf(uv,w)\\
						&=\sbf{v}\clf(u,vw)\sbf{w}\clf(v,w)\\
						&=\clf(2u+1,2vw+1)\clf(2v,2w+1)\\
				                &=\clf(p,qr)\clf(q,r)
  \end{align*}
  \item $p=2u+1,\;q=2v+1,\;r=2w$
  \begin{align*}\text{Then\ }\clf(p,q)\clf(pq,r)&=\clf(2u+1,2v+1)\clf(2uv,2w)\\
                                                &=\sbf{u}\clf(u,v)\clf(uv,w)\\
						&=\sbf{u}\clf(u,vw)\clf(v,w)\\
						&=\clf(2u+1,2vw+1)\clf(2v+1,2w)\\
				                &=\clf(p,qr)\clf(q,r)
  \end{align*}
  \item $p=2u+1,\;q=2v+1,\;r=2w+1$
  \begin{align*}\text{Then\ }\clf(p,q)\clf(pq,r)&=\clf(2u+1,2v+1)\clf(2uv,2w+1)\\
                                                &=\sbf{u}\clf(u,v)\sbf{uv}\clf(uv,w)\\
						&=\sbf{u}\sbf{u}\sbf{v}\clf(u,vw)\clf(v,w)\\
						&=\clf(u,vw)\sbf{v}\clf(v,w)\\
						&=\clf(2u+1,2vw)\clf(2v+1,2w+1)\\
				                &=\clf(p,qr)\clf(q,r)
  \end{align*}
 \end{enumerate}
 Thus $\clf$ is associative for $G_{n+1}$ if it is associative for $G_n$ establishing the associativity of $\clf$ for all $G_k.$
\end{proof}

By Theorem \ref{T:assocprod}, $\clf$ is proper, thus, if $x,y\in\mathcal{C}_n,$ where $\mathcal{C}_n$ is the $n$-dimensional Clifford algebra, then by Theorem \ref{T:product} the product $xy$ may be computed from the sum

				   \begin{align*}
                                     xy &= \sum_{r\in G_n}\ip{x}{i_r\conj{y}}i_r\\
                                        &= \sum_{r\in G_n}\ip{\conj{x}i_r}{y}i_r
                                   \end{align*}

where the conjugate of a multivector $x$ is $\conj{x}=\sum_{p\in G_n}\clf(p,p)x_pi_p$ by Theorem \ref{T:conjugate} part $i.$

\begin{theorem}
 If $p\in G_n,$ then $\clf(p,p)=(-1)^s,$ where $s$ is the triangular number $T_\sob{p}=\frac{\sob{p}\left[\sob{p}-1\right]}{2}.$
\end{theorem}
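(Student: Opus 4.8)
The plan is to induct on $n$, feeding the recursive description of $\clf$ from the preceding corollary into the recursion $\sob{2u}=\sob{u}$, $\sob{2u+1}=\sob{u}+1$. The base case is $n=0$: the only element of $G_0$ is $p=0$, and $\sob{0}=0$ makes the triangular number $T_{\sob{0}}=T_0=0$, so the asserted value is $(-1)^0=1=\clf(0,0)$, which is correct.

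For the inductive step, assume the formula holds for every $u\in G_n$ and take $p\in G_{n+1}$, so that $p=2u$ or $p=2u+1$ for some $u\in G_n$. In the even case, $\clf(2u,2u)=\clf(u,u)$ while $\sob{2u}=\sob{u}$, so $T_{\sob{p}}=T_{\sob{u}}$ and the inductive hypothesis yields $\clf(p,p)=(-1)^{T_{\sob{u}}}=(-1)^{T_{\sob{p}}}$ with nothing further to check. In the odd case, $\clf(2u+1,2u+1)=\sbf{u}\clf(u,u)$, which by the inductive hypothesis equals $(-1)^{\sob{u}+T_{\sob{u}}}$. Setting $m=\sob{u}$, so that $\sob{p}=m+1$, the remaining point is the elementary identity $m+T_m=T_{m+1}$, i.e. $m+\tfrac{m(m-1)}{2}=\tfrac{(m+1)m}{2}$; this is just the one-term recursion for triangular numbers and is immediate. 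Hence $\clf(p,p)=(-1)^{T_{m+1}}=(-1)^{T_{\sob{p}}}$, closing the induction.

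There is no genuine obstacle here; the only thing to get right is the bookkeeping that matches the extra sign $\sbf{u}$ contributed by an odd step against the increment $T_{\sob{u}+1}-T_{\sob{u}}=\sob{u}$, which is precisely the triangular-number recursion just used. As a consistency check, the formula reproduces the familiar blade signatures: since $i_pi_p=\clf(p,p)$ (because $pp=0$ under the bit-wise exclusive or), a $1$-blade squares to $+1$ ($T_1=0$), a $2$-blade to $-1$ ($T_2=1$), a $3$-blade to $-1$ ($T_3=3$), and a $4$-blade to $+1$ ($T_4=6$), matching the usual period-$4$ pattern.
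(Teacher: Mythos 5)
Your proof is correct and follows essentially the same route as the paper: induction on $n$, splitting $p\in G_{n+1}$ into the even case $p=2u$ (where nothing changes) and the odd case $p=2u+1$ (where the extra factor $\sbf{u}$ is absorbed by the triangular-number identity $T_{\sob{u}}+\sob{u}=T_{\sob{u}+1}$). The closing sanity check against the familiar period-$4$ blade-square pattern is a nice addition not present in the paper.
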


\begin{proof}
 The proof is by induction. It is true for $G_0,$ since if $p=0$ then $\sob{p}=0$ so $s=0,$ thus $\clf(p,p)=1=(-1)^s.$
 
 Assume the relation is true for $G_n.$ Let $p\in G_{n+1}.$ Then there is a $u\in G_n$ such that either $p=2u$ or $p=2u+1.$
 
Suppose $p=2u.$ Then $u\in G_n,$ so $\clf(u,u)=(-1)^s,$ where $s=\frac{\sob{u}\left[\sob{u}-1\right]}{2}.$ But $\sob{u}=\sob{2u}$ and $\clf(u,u)=\clf(2u,2u)=\clf(p,p).$ So $\clf(p,p)=\frac{\sob{p}\left[\sob{p}-1\right]}{2}.$
 
Suppose $p=2u+1.$ Then 
\begin{align*}
   \clf(p,p)&=\clf(2u+1,2u+1)\\
            &=\sbf{u}\clf(u,u)\\
	    &=\sbf{u}(-1)^t \text{\ where $t=\frac{\sob{u}\left[\sob{u}-1\right]}{2}$}\\
	    &=(-1)^s \text{\ where $s=\sob{u}+\frac{\sob{u}\left[\sob{u}-1\right]}{2}$}
\end{align*}
 Thus, \begin{align*}
         s&=\frac{\left[\sob{u}+1\right]\sob{u}}{2}\\
	  &=\frac{\sob{p}\left[\sob{p}-1\right]}{2}
       \end{align*}

Thus for $p\in G_{n+1},$ $\clf(p,p)=(-1)^s,$ where $s=\frac{\sob{p}\left[\sob{p}-1\right]}{2}.$
\end{proof}
\begin{corollary}
 If $x\in\mathcal{C}_n$, $p\in G_n$ and if $T_\sob{p}$ is an odd triangular number, then $\ip{x}{i_px}=0$.
\end{corollary}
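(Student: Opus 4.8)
The plan is to deduce this immediately from the preceding theorem together with Theorem~\ref{T:zero}. First I would note that the Clifford twist $\clf$ is a positive twist, since $\clf(0,0)=1$, and it was shown above to be associative; hence by Theorem~\ref{T:assocprod} the twist $\clf$ is proper. Next, recall that in $G_n=\mathbb{Z}_2^n$ every element is its own inverse, so $p=p^{-1}$ for the given $p$. These are exactly the hypotheses of Theorem~\ref{T:zero}, which therefore yields $[1-\clf(p,p)]\ip{x}{i_px}=0$ for all $x\in\mathcal{C}_n$.

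It then remains to evaluate $\clf(p,p)$. By the preceding theorem $\clf(p,p)=(-1)^{s}$ with $s=T_\sob{p}$, the triangular number $\sob{p}(\sob{p}-1)/2$. The assumption that $T_\sob{p}$ is odd forces $\clf(p,p)=-1$, so the scalar factor $1-\clf(p,p)=2$ is nonzero in $\mathbb{R}$; cancelling it gives $\ip{x}{i_px}=0$, as desired. One may observe in passing that $T_k$ is odd exactly when $k\equiv2$ or $3\pmod 4$, so the conclusion applies precisely to those blades whose grade is congruent to $2$ or $3$ modulo $4$.

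I do not anticipate any genuine obstacle here: the corollary is simply the specialization to the Clifford twist of the corollary following Theorem~\ref{T:zero} (the one asserting $\ip{x}{i_px}=0$ whenever $\sgn$ is proper, $p=p^{-1}$, and $\sgn(p,p)=-1$), with the preceding theorem supplying the arithmetic criterion under which $\clf(p,p)=-1$. The only points needing a moment's care are the verification that an odd triangular exponent produces the sign $-1$, and the reminder that Theorem~\ref{T:zero}, though stated for an arbitrary proper twist on a general group, does apply here because $G_n$ consists entirely of involutions.
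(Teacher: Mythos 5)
Your proposal is correct and follows exactly the route the paper intends: invoke the corollary to Theorem~\ref{T:zero} (using that $\clf$ is proper because it is positive and associative, and that $p=p^{-1}$ in $G_n$), then use the preceding theorem to conclude $\clf(p,p)=(-1)^{T_{\sob{p}}}=-1$ when $T_{\sob{p}}$ is odd. Your parenthetical observation that this happens precisely when $\sob{p}\equiv 2$ or $3\pmod 4$ is a nice concrete addition but not needed for the argument.
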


\begin{theorem}
 $\sgn(q,p)=\sgn(p,q)(-1)^{\sob{p}\sob{q}-\sob{p\wedge q}}$
\end{theorem}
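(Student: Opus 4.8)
The plan is induction on $n$ with $p,q\in G_n$, following the recursion that defines the twist. (I read the $\alpha$ appearing in the statement as the Clifford twist $\clf$, which is the twist under discussion here and the only one for which such a formula can hold.) Abbreviate the claimed correction factor by writing $\epsilon(p,q)=(-1)^{\sob{p}\sob{q}-\sob{p\wedge q}}$, so the goal is the identity $\clf(q,p)=\clf(p,q)\,\epsilon(p,q)$. For the base case $n=0$ we have $p=q=0$, and both sides equal $1$ because $\clf(0,0)=1$ and the exponent $\sob{0}\sob{0}-\sob{0\wedge 0}$ is $0$.

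For the inductive step, assume the identity holds on $G_n$ and take $p,q\in G_{n+1}$, written $p=2u+a$ and $q=2v+b$ with $a,b\in\{0,1\}$ and $u,v\in G_n$. First I would record the two elementary bit-arithmetic facts needed: $\sob{2u+a}=\sob{u}+a$, and $(2u+a)\wedge(2v+b)=2(u\wedge v)+ab$, whence $\sob{p\wedge q}=\sob{u\wedge v}+ab$. Substituting these into the exponent and cancelling the two copies of $ab$ gives $\sob{p}\sob{q}-\sob{p\wedge q}=\bigl(\sob{u}\sob{v}-\sob{u\wedge v}\bigr)+a\sob{v}+b\sob{u}$, that is, $\epsilon(p,q)=\epsilon(u,v)\,(-1)^{a\sob{v}+b\sob{u}}$.

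Next I would split on the four choices of $(a,b)$ and use the recursion $\clf(2u,2v)=\clf(2u+1,2v)=\clf(u,v)$ and $\clf(2u,2v+1)=\clf(2u+1,2v+1)=\sbf{u}\clf(u,v)$ to reduce both $\clf(q,p)$ and $\clf(p,q)$ to values on $G_n$; then the inductive hypothesis $\clf(v,u)=\clf(u,v)\,\epsilon(u,v)$ collapses the desired equation to a scalar identity among powers of $-1$. For $(a,b)=(0,0)$ it reads $\epsilon(p,q)=\epsilon(u,v)$; for $(0,1)$ it reads $\epsilon(p,q)=\sbf{u}\,\epsilon(u,v)$; for $(1,0)$ it reads $\epsilon(p,q)=\sbf{v}\,\epsilon(u,v)$; and for $(1,1)$ it reads $\epsilon(p,q)=\sbf{u}\sbf{v}\,\epsilon(u,v)=(-1)^{\sob{u}+\sob{v}}\epsilon(u,v)$ (where the lemma $\sbf{u}\sbf{v}=\sbf{uv}$ may be invoked if one prefers that form). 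Each of these is exactly the factor $(-1)^{a\sob{v}+b\sob{u}}$ computed in the previous paragraph, so the induction goes through and the theorem holds for all $G_n$, hence for all $p,q\in G$.

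The whole argument is bookkeeping: the only steps that need care are verifying the two identities for $\sob{\,\cdot\,}$ and $\wedge$ under the doubling maps and then tracking the signs consistently across the four cases. There is no conceptual obstacle, because the recursion defining $\clf$ and the recursion hidden in $\epsilon$ were set up to line up case by case.
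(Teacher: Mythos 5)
Your proof is correct, and it takes a genuinely different route from the paper's. The paper argues combinatorially: factor $i_p$ and $i_q$ into 1-blades, count the $\sob{p}\sob{q}$ anticommuting transpositions needed to move the 1-blades of $i_q$ past those of $i_p$, and then observe (somewhat informally) that each shared 1-blade, of which there are $\sob{p\wedge q}$, contributes a transposition that does not change sign. Your argument instead inducts on the bit-length $n$ with $p=2u+a$, $q=2v+b$, using the recursive definition of $\clf$ and the bit-arithmetic facts $\sob{2u+a}=\sob{u}+a$ and $(2u+a)\wedge(2v+b)=2(u\wedge v)+ab$ to show that $\epsilon(p,q)=\epsilon(u,v)(-1)^{a\sob{v}+b\sob{u}}$, which is exactly the factor the recursion for $\clf$ produces. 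Both proofs are sound; the paper's is shorter and more geometric, but its handling of the $p\wedge q\ne 0$ case is a bit hand-wavy (it first assumes $p\wedge q=0$ and then asserts that each shared factor suppresses one sign change), whereas yours is a mechanical verification that never needs that separation and sits more naturally alongside the recursive definitions and the inductive associativity proof already in the paper. One small polish: it is worth stating explicitly that $\clf(p,q)=(-1)^{b\sob{u}}\clf(u,v)$ uniformly in $a$ (the recursion shows the parity of the \emph{first} argument never enters), since that observation lets the four $(a,b)$ cases collapse to a single calculation.
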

\begin{proof}
 The multiblades $i_p$ and $i_q$ contain exactly $\sob{p\wedge q}$ 1-blade factors in common. Assume $p\wedge q=0$. Factor $i_p$ and $i_q$ into the product of 1-blades. To get from the arrangement $i_pi_q$ to the arrangement $i_qi_p,$ swap the left-most 1-blade factor of $i_q$ with each of the 1-blade factors of $i_p$ resulting in $\sob{p}$ changes of sign. Repeat this for each of the $\sob{q}$ 1-blade factors of $i_q$ from left to right until each 1-blade of $i_q$ has been swapped with each of the $\sob{p}$ 1-blades of $i_p.$ This results in $\sob{p}\sob{q}$ changes of sign to reverse $i_p$ and $i_q.$ If $p\wedge q\ne0,$ then the sign will fail to change for a total of $\sob{p\wedge q}$ times. Thus, in general, there will be $\sob{p}\sob{q}-\sob{p\wedge q}$ changes of sign in reversing the product of $i_p$ and $i_q.$
\end{proof}

It is easily verified that $(-1)^{\sob{p}\sob{q}}$ and $(-1)^\sob{p\wedge q}$ are associative twists on $G.$ The latter of these two is the \emph{Hadamaard} twist. Interestingly, this is the twist which results if, instead of the Cayley-Dickson construction, one defines the product of an ordered pair as
\[ (a,b)(c,d)=(ac-bd,ad+bc) \]
Another interesting twist which is not associative, but proper is $(-1)^\sob{pq}.$
\section{Conclusion}

 My primary purpose in writing this paper was to document some of my notes.
 
 There are many questions and avenues of further research in the area of proper twists on groups and their resulting twisted algebras. 
 
 The question whether the Hilbert space $\ell^2$ is a Cayley-Dickson algebra is open, so far as I know, and would be resolved by a proof of Conjecture \ref{Cj:convolution}. In fact, a resolution of this conjecture in the affirmative would render $\ell^2$ closed under any proper twist defined on the group $G$ of non-negative integers under the ``exclusive or'' product.
 
 The set $\mathfrak{S}_P(G)$ of proper twists on an arbitrary group G is an abelian group. The set $\mathfrak{S}_A(G)$ of associative twists is a subgroup of $\mathfrak{S}_P(G)$. Do these groups have any interesting properties?

\end{document}